\journal{}
\begin{document}

\def\d{{\partial}}
\def\CC{{\mathbb C}}
\def\C{{\mathbb C}}
\def\A{{\mathbb A}}
\def\AA{{\mathbb A}}
\def\cC{{\cal C}}
\def\cA{{\cal A}}
\def\cB{{\cal B}}
\def\cH{{\cal H}}
\def\ZZ{{\mathbb Z}}

\def\DD{{\mathbb D}}
\def\D{{\mathbb D}}
\def\cD{{\cal D}}
\def\NN{{\mathbb N}}
\def\S{{\mathbb S}}

\def\QQ{{\mathbb Q}}

\def\Arg{{\mbox{Arg} \, }}

\def\RR{{\mathbb R}}
\def\R{{\RR}}

\def\TT{{\mathbb T}}
\def\T0{{\mathbb T}_{x_0}}
\def\T{{\mathbb T}}
\def\cW{{\cal W}}
\def\cF{{\cal F}}
\def\cR{{\mathcal R}}
\def\e{{\varepsilon}}

\def\cS{{\cal S}}
\def\cH{{\cal H}}
\def\trait (#1) (#2) (#3){\vrule width #1pt height #2pt depth #3pt}
\def\fin{\hfill\trait (0.1) (5) (0) \trait (5) (0.1) (0) \kern-5pt 
\trait (5) (5) (-4.9) \trait (0.1) (5) (0)}
\def\tr{\mbox{tr}}
\def\supess{\mathop{\mbox{ess sup}\,}}

\newtheorem{theorem}{Theorem}[section]
\newtheorem{corollary}{Corollary}[section]
\newtheorem{lemma}{Lemma}[section] 
\newtheorem{proposition}{Proposition}[section]
\newtheorem{conjecture}{Conjecture}[section]
\newtheorem{remark}{Remark}[section]
\newtheorem{example}{Example}[section]

\newcommand {\be}{\begin{equation}}
\newcommand {\ee}{\end{equation}}  
\newcommand {\eps} {\varepsilon}
\newcommand {\alp} {\alpha}
\newcommand {\la} {\lambda}
\newcommand {\sig} {\sigma}
\newcommand {\deri}[2] {\partial_{#2} #1} 
\newcommand {\derd}[3] {\frac {\partial^{2} #1}{\partial #2 \partial 
#3}}
\newcommand {\ders}[2] {\frac {\partial^{2} #1}{\partial {#2} ^2 }}
\newcommand{\modif}[1]{{\color{red} #1}}

\begin{frontmatter}



\title{Uniqueness results for 
{{inverse}} Robin problems with bounded coefficient} 


\author{Laurent Baratchart}

\address{Projet APICS, INRIA, 2004 route des Lucioles, BP 93\\
06902 Sophia Antipolis Cedex, France}

\ead{laurent.baratchart@inria.fr}

\author{Laurent Bourgeois}

\address{Laboratoire POEMS, ENSTA ParisTech, 828, Boulevard des Mar\'echaux\\
   91762 Palaiseau Cedex, France}

\ead{laurent.bourgeois@ensta.fr}

\author{Juliette Leblond}

\address{Projet APICS, INRIA, 2004 route des Lucioles, BP 93\\
06902 Sophia  Antipolis Cedex, France}

\ead{juliette.leblond@inria.fr}

\begin{abstract}
In this paper we address the uniqueness issue in the classical Robin inverse problem on a Lipschitz domain $\Omega\subset\RR^n$, with $L^\infty$ Robin coefficient, $L^2$ 
Neumann data and  conductivity of class $W^{1,r}(\Omega)$, $r>n$.
{{We show  that}} uniqueness of the Robin coefficient on a subpart of the boundary{{,}} 
given Cauchy data on the {{complementary part, does hold}} in dimension 
{{$n=2$}} {{ but needs not hold in higher dimension. We also raise on open issue on harmonic gradients which is of interest in this context}}.
\end{abstract}

\begin{keyword}
{{ Robin inverse problem,  
holomorphic Hardy--Smirnov classes,
elliptic regularity,  unique continuation}}.



\end{keyword}

\end{frontmatter}



\section{Introduction}

This study deals with uniqueness issues { for} the classical Robin inverse 
boundary value { problem.  Mathematically speaking, the inverse Robin 
problem for an elliptic partial differential equation on a domain consists
in finding the ratio between the normal derivative
and the trace of the solution (the so-called Robin coefficient) on
a subset of the boundary, granted the Cauchy data ({\it i.e.} the normal 
derivative and the trace of the solution)  on the complementary subset.
In this paper, we deal primarily with} $L^\infty$ Robin coefficients and $L^2$ 
Neumann data, for isotropic
{{conductivity equations of the type $\mbox{div} \, \left(\sigma \, \mbox{grad} \, u\right) = 0$ on Lipschitz domains $\Omega\subset\RR^n$,}} 
with {{Sobolev-smooth real-valued {strictly elliptic}
conductivity $\sigma$ of class 
$W^{1,r}(\Omega)$, $r>n$}}. {An anisotropic analog 
to our uniqueness result is discussed  in a separate section.}

The Robin inverse problem arises for example when considering 
non-destructi\-ve testing of corrosion in an electrostatic conductor.
In this case, data consist of surface measurements of both the 
current and the voltage on some (accessible) part of the boundary of 
the conductor, while the complementary (inaccessible) part of the boundary 
is subject to corrosion. 
Non-destructive testing consists in quantifying corrosion from the data. 
Robin boundary condition can be regarded as a simple model for corrosion 
\cite{inglese}. Indeed, as was proved in \cite{buttazzo_kohn}, 
such boundary conditions arise
when considering a thin oscillating coating surrounding a homogeneous background medium such that the thickness of the layer and the wavelength of the oscillations tend simultaneously to $0$.
A mathematical framework for corrosion detection can then be
described as follows. We consider a 
conductivity equation in an open domain $\Omega$, {{as a generalization of Laplace equation to non-homogeneous media,}}
the boundary of which is divided 
into two parts. The first part $\Gamma$ is characterized by a homogeneous Robin condition with functional coefficient $\la$. A non vanishing flux is imposed on the second part $\Gamma_0$ of the boundary. This provides us
with a well-posed forward problem, that is,  there uniquely exists a
solution in $\Omega$ meeting the prescribed boundary conditions.
The inverse problem consists in
recovering the unknown Robin coefficient $\la$ on $\Gamma$
from measurements of the trace of the solution on $\Gamma_0$.
Further motivation to solve the Robin problem are indicated in
\cite{lanzani-shen} and its bibliography.

A basic question is uniqueness: is the coefficient $\la$ on $\Gamma$ uniquely defined by the available Cauchy data on $\Gamma_0$ as soon as the latter has 
positive measure? In other words, can we find two different
Robin coefficients that produce the same measurements? 
The answer naturally depends on the smoothness assumed for
$\la$. 

On smooth domains,
for the Laplace operator  at least, uniqueness of the inverse
Robin problem for  (piecewise) continuous 
$\la$ has been known for decades to hold in all dimensions.
The proof is for example given { in \cite{inglese}, and  in \cite{colton_kirsch} for the Helmholtz 
equation}.
It relies on a strong unique continuation property { (Holmgren's theorem)}, 
{\it i.e.} on the fact that a harmonic function in $\Omega$, the trace and 
normal derivative of which both vanish on { a non-empty open} subset of 
the boundary $\partial\Omega$, vanishes identically. 

This argument no longer works for functions $\la$ that are merely
bounded.
In this case we meet the following weaker unique continuation problem: 
{\em does a harmonic function, the trace and  normal derivative of which both 
vanish on a subset of $\partial\Omega$ with positive measure, 
vanish identically?} 
A famous counterexample { in} \cite{bourgain_wolff} 
shows that such a unique continuation result is false in dimension 
{ {$3$ and higher}}. 
In dimension 2, a proof that such a unique continuation property
holds   for { {the Laplace equation can be found}} 
in \cite{alexandrov} when the { solution} is assumed to be $C^1$ up to the 
boundary and $\Omega$ is the unit disk.

In this work, we prove more generally
that { this  unique continuation result still holds for a $W^{3/2,2}$ solution} 
to a 
conductivity equation with $W^{1,r}$-conductivity {{$\sigma$, { {$r>2$}},}}  in a
bounded simply connected Lipschitz domain {{$\Omega \subset \RR^2$}}.
{ This enables us to conclude to uniqueness 
in the inverse Robin problem. 
}Our proof { relies on two devices}:

-- A  factorization result for the complex derivative
of a solution to an isotropic conductivity equation, where one factor is
holomorphic and the other is smoothly  invertible. This factorization implicitly appears in
\cite{BLRR}, but we shall have to work out its regularity 
on a Lipschitz domain. The holomorphic { factor} in fact 
belongs to a Hardy--Smirnov class, hence is uniquely defined 
{{by its boundary values 
on a boundary subset of positive measure.}}

--  A Rolle-type theorem for
$W^{1,2}$ Sobolev functions on the real line.

Our { uniqueness result for the Robin inverse problem} generalizes that of \cite{chaabane_ferchichi_kunisch}
established in smoother cases and 
under the restriction that the imposed flux is non negative. The
proof therein is based on 
positivity and monotonicity arguments established in 
\cite{chaabane_ferchichi_kunisch2}, and does not use complex analysis. 
We also turn the counterexample of \cite{bourgain_wolff} into a 
counterexample to uniqueness in the Robin problem in dimension { 3,
and raise an intriguing issue on harmonic gradients vanishing on a 
boundary subset of positive measure which governs uniqueness in
higher dimension under mild smoothness assumptions on
the sets where the Cauchy data and
the Robin coefficient are defined.}

{ The  { paper} is organized as follows. 
In section \ref{sec:Sob}, we set some notation and we recall several results
from the theory of Sobolev spaces. 
In
Section \ref{sec:cond}, we introduce the isotropic conductivity PDE and associated Robin problems. 
In Section \ref{sec:ur00},
we state our uniqueness   results for such equations 
on Lipschitz domains in dimension 2.
We also give a counterexample in higher dimension.
Section \ref{secH00} 
is a review of
holomorphic Hardy spaces on the disk 
{{and their generalization into}} 
Smirnov spaces 
on Lipschitz domains, in connection
with the Dirichlet problem for harmonic functions. 
Proofs {{of}} the results in Section \ref{sec:ur00}  are 
provided in Section \ref{sec:proof1},
along with the {{necessary}} factorization and regularity 
{{properties of  solutions to the {{2D}} Neumann problem which are of
interest in their own right. Surprisingly perhaps, these seem not to 
have appeared before in the literature}}.}
{In section \ref{anisotropic}, we indicate how our uniqueness
result for the isotropic Robin problem implies a corresponding result in the 
anisotropic case. For this, we rely on the method of isothermal coordinates
initiated in \cite{Sylvester} and pursued in \cite{APL, SunUhl}, allowing 
us to transform an anisotropic equation in the plane into an isotropic 
one.
Section \ref{conclusion} contains  concluding remarks.}

\section{Notation and preliminaries on Sobolev spaces}
\label{sec:Sob}

Let $\RR$ and $\CC$ denote the real and complex numbers.
With superscript ``$t$'' to mean ``transpose'',
we write $x=(x_1,\cdots,x_n)^t$ to indicate the coordinates of $x\in\RR^n$,
and we identify  $\CC$ with $\RR^2$ on putting
$z=x_1+ix_2$.

For $1\leq p\leq\infty$, $k>0$ an integer and $E\subset\RR^n$ a 
Lebesgue measurable set, we let $L^p(E)$ be the space of  
$\RR^k$-valued
measurable functions on $E$ such that
\begin{align}
\label{defnLp}
 \|f\|^p_{L^p(E)}& = 
\int_E |f|^p\,dm_n<\infty\qquad\text{if}\ 
p<\infty,\\
\nonumber
 \|f\|_{L^\infty(E)}& = 
\text{ess sup}_E\  |f|<+\infty,
\end{align}
where $m_n$ stands for Lebesgue measure.
In \eqref{defnLp} above, $|f|$ designates the
Euclidean norm of $f$ and the  notation is irrespective of $k$, 
which should cause no confusion.

{ In Section \ref{ssec:sob} we recall some properties of Sobolev spaces. 
We turn in Section \ref{ssec:ntmf} to classical definitions of non tangential
convergence and maximal functions, while Section \ref{sec:pc} is specifically devoted to the planar case.
}
\subsection{Sobolev spaces}
\label{ssec:sob}
For $\Omega\subset\RR^n$ an open set,
we let $W^{1,p}(\Omega)$ be the familiar Sobolev space of complex-valued
functions in $L^p(\Omega)$ whose first order
derivatives  again lie in $L^p(\Omega)$. A complete norm  on
$W^{1,p}(\Omega)$ is given by
\begin{align}
\label{normS}
 \|f\|^p_{W^{1,p}(\Omega)}&= 
\|f\|^p_{L^p(\Omega)}+\|\nabla f\|^p_{L^p(\Omega)}
\qquad\text{if}\ 
p<\infty,\\
\nonumber
\|f\|_{W^{1,\infty}(\Omega)}&=\max\bigl(\|f\|_{L^\infty(\Omega)},
\|\nabla f\|_{L^\infty(\Omega)}\bigr),
\end{align}
where $\nabla f$ is the gradient of $f$ defined as
$\nabla f=(\partial_{x_1}f,\cdots,\partial_{x_n} f)^t$ , 
with $\partial_{x_j}$ to indicate the derivative with respect 
to $x_j$. 

When $n=1$, we simply write $f'$ instead of $\partial_{x_1} f$.
Throughout, differentiation is given in the 
distributional sense: $\int_\Omega \partial_{x_j}f\varphi dm_n=-\int_\Omega f\partial_{x_j}\varphi dm_n$ whenever $\varphi\in\mathcal{D}(\Omega)$, the space of complex-valued  $C^\infty$ smooth functions with compact support in $\Omega$.

When $n=2$, which is the main (but not the sole) concern of this paper,
it is often convenient to use
the complex differential operators:
\begin{equation}
\label{defderC}
\partial=\frac{1}{2}(\partial_{x_1}-i\partial_{x_2}),\qquad
\bar\partial=\frac{1}{2}(\partial_{x_1}+i\partial_{x_2}),
\end{equation}
so that $df=\partial fdz+\bar\partial fd\bar z$. When $f$ is holomorphic:
$\bar\partial f=0$, we also write $f'$ instead of 
$\partial f=df/dz$. 

We put
$W^{1,p}_{loc}(\Omega)$ for the space of functions
whose restriction to any relatively compact open subset 
$\Omega_0$ of $\Omega$ lies in $W^{1,p}(\Omega_0)$. 
The space $W^{2,p}(\Omega)$ is comprised of
$L^p$-functions whose distributional derivatives of the first 
order lie in $W^{1,p}(\Omega)$, with norm
$\|f\|_{W^{2,p}(\Omega)}^p=\|f\|^p_{L^p(\Omega)}+\sum_j\|\partial_{x_j}f\|_{W^{1,p}(\Omega)}^p$.
The definition of $W^{2,p}_{loc}(\Omega)$ parallels that of 
$W^{1,p}_{loc}(\Omega)$.

For emphasis, we use at places 
a subscript ``$\RR$'', as in $W^{1,p}_\RR(\Omega)$, to
single out the real subspace of real-valued functions. The same symbol
({\it e.g.} ``$C$'') is used many times to mean different constants.
We write $A\sim B$ to abbreviate $C A\leq B\leq C'A$, where $C,C'$ 
are constants. 

If $n=1$, then $W^{1,p}(\Omega)$  is just the space of 
locally  absolutely continuous  functions
with derivative  in $L^p(\Omega)$. The corresponding 
characterization when $n>1$ is more  subtle 
\cite[Thm. 2.1.4]{ziemer}, but in any case 
$W^{1,\infty}(\Omega)$  identifies with 
Lipschitz-continuous functions on $\Omega$ \cite[Sec. V.6.2]{stein}.

An open set $\Omega\subset\RR^n$ is called Lipschitz if, in a
neighborhood
of each boundary point, it is
isometric to the epigraph of a 
Lipschitz function  \cite[Def. 1.2.1.1]{gris}.
When $\Omega$ is bounded and Lipschitz, each member of $W^{1,p}(\Omega)$ is 
the restriction to $\Omega$ of a function in $W^{1,p}(\RR^n)$
(the extension theorem \cite[Ch. VI, Thm 5]{stein}), and
the space of restrictions
$(\mathcal{D}(\RR^n))_{|\Omega}$ is dense in
$W^{1,p}(\Omega)$ for $1\leq p<\infty$ \cite[Thm 3.22]{Adams}.
Here and below, the subscript ``${|E}$''
indicates restriction to a set $E$.
If moreover $p>n$ then $W^{1,p}(\Omega)$ embeds continuously
in the space of  H\"older-continuous functions on $\Omega$ with 
exponent  $1-n/p$;  when $p=n$ such an embedding holds  in
every $L^\ell(\Omega)$, $1\leq \ell<\infty$, and if $p<n$ then
$W^{1,p}(\Omega)$ embeds continuously in $L^{p_*}$ with  $p_*=np/(n-p)$
(the Sobolev embedding theorem \cite[Thms 4.12, 4.39]{Adams}).
In addition, for $p\leq n$ and $\ell<p_*$ ($p_*=\infty$ if $p=n$), 
the previous  { embeddings are} compact 
(the Rellich-Kondrachov theorem \cite[Thm 6.3]{Adams}).

Also, a distribution $g$ on  $\Omega$ whose first 
 derivatives lie  in $L^p(\Omega)$  does belong to
 $W^{1,p}(\Omega)$ \cite[Thm 6.74]{Demengel}\footnote{The proof given 
 there for bounded $C^1$-smooth $\Omega$ carries over to 
 the Lipschitz case.}, and  if $\Omega$ is connected while 
$E\subset\Omega$ is such that $m_n(E)>0$,
then
 \begin{equation}
 \label{estfonc}
 \left\|g-g_E\right\|_{L^p(\Omega)}\leq C \|\,\nabla g\,\|_{L^p(\Omega)},\quad
 \,\,\,\mbox{where}\,\,\, g_E:=\frac{1}{m_n(E)}
 \int_E g\,dm_n,
\end{equation}
 for some   $C=C(p,\Omega,E)$ (the Poincar\'e inequality, apply
\cite[Thm 4.2.1]{ziemer} with $L(u)=u_E$).
The Sobolev embedding theorem entails that $W^{1,p}(\Omega)$ is an algebra for
$p>n$ \cite[Thm 4.39]{Adams}, in particular if $f\in W^{1,p}(\Omega)$ and
$F$ is entire then $F(f)\in W^{1,p}(\Omega)$ with norm bounded in terms of 
$\Omega$, $p$, $F$, and $\|f\|_{W^{1,p}(\Omega)}$.

For $1<p<\infty$, the space $W^{\theta,p}(\Omega)$ of fractional order 
$\theta\in(0,1)$ consists of those $f\in L^p(\Omega)$ for which
\begin{equation}
\label{defSobfrac}
\|f\|^p_{W^{\theta,p}(\Omega)}=\|f\|^p_{L^p(\Omega)}+
\int_\Omega\int_\Omega \frac{|f(x)-f(y)|^p}{|x-y|^{n+\theta p}}dm_n(x)dm_n(y)
<\infty.
\end{equation}
The space $W^{1+\theta,p}(\Omega)$ is comprised of $f\in L^p(\Omega)$
whose derivatives of the first order lie in $W^{\theta,p}(\Omega)$, with norm
$\|f\|_{W^{1+\theta,p}(\Omega)}^p=\|f\|_{L^p(\Omega)}^p+\sum_j\|
\partial_{x_j}f\|_{W^{\theta,p}(\Omega)}^p$.

When $\Omega$ is bounded and  Lipschitz, $W^{\theta,p}(\Omega)$ may also
be defined {\it via}  real interpolation between 
$L^p(\Omega)$  and $W^{1,p}(\Omega)$ where it corresponds
to the Besov space $B^{\theta,p,p}(\Omega)$; that is, using standard 
notation for the interpolation functor, it holds that
$W^{\theta,p}(\Omega)=[L^p(\Omega),W^{1,p}(\Omega)]_{\theta,p}$, see 
\cite[Sec. 7.32 \& Thm 7.47]{Adams}. 

A slightly different, but equivalent 
interpolation method is
that of trace spaces of J.-L. Lions
\cite[Ch. 7]{Adams1}. If $\textrm{d}(x,\partial\Omega)$ denotes 
Euclidean distance from $x\in\RR^n$ to the boundary of $\Omega$, there is 
$C=C(\Omega,\theta,p)$ such that for all $f \in L^p(\Omega)$ with $|\nabla f| \in L^p_{loc}(\Omega)$, 
\begin{equation}
\label{inegfracdb}
\|f\|_{W^{\theta,p}(\Omega)}\leq C
\Bigl(\|\textrm{d}(.,\partial\Omega)^{1-\theta}\,\nabla f\|_{L^p(\Omega)}+
 \|f\|_{L^p(\Omega)}\Bigr).
\end{equation}
In fact, \cite[Thm 4.1]{JKinhom} asserts that the left and right hand sides of
\eqref{inegfracdb} are equivalent when $f$ is harmonic 
(with constants depending only on $\Omega$), and one can check 
that the portion of proof yielding \eqref{inegfracdb} 
(which rests on trace space interpolation) does not depend on harmonicity.

Recall the basic property of interpolation: if $A$ is 
linear and continuous both $X\to X'$ and $Y\to Y'$ where $(X,X')$ and 
$(Y,Y')$ are interpolation pairs of Banach spaces, then
$A$ is continuous $[X,Y]_{\theta,p}\to [X',Y']_{\theta,p}$
\cite[Thm 7.23]{Adams}. 
From this, a fractional version of the Sobolev embedding 
theorem is easily   obtained
\cite[Cor. 4.5.3]{Demengel}. 
Namely, if $\theta p>n$ then $W^{\theta,p}(\Omega)$ embeds continuously
in H\"older-continuous functions with 
exponent  $\theta-n/p$; if $\theta p=n$, such an embedding holds  in
$L^\ell(\Omega)$ for $1\leq \ell<\infty$; if $\theta p<n$, then
$W^{\theta,p}(\Omega)$ embeds continuously in $L^{p^*}$ with  
$p^*=np/(n-\theta p)$.

When $\Omega$ is Lipschitz and bounded, its boundary 
$\partial\Omega$ is a compact $(n-1)$-dimensional Lipschitz manifold 
on which $L^p(\partial\Omega)$, $W^{1,p}(\partial\Omega)$, and
$W^{\theta,p}(\partial\Omega)$ are defined as before, only with area
measure  $d\Sigma$ instead of $dm_n$ and Lipschitz-continuous test functions
rather than smooth ones \cite[Sec. 1.3.3]{gris}.  
For $1<p<\infty$,
each $f \in W^{1,p}(\Omega)$ has a trace on 
$\partial \Omega$, denoted again by $f$ or sometimes
$\tr_{\partial \Omega} \ f$ for emphasis, 
whose pointwise definition $\Sigma$-a.e.
rests on the extension theorem 
and the fact that non-Lebesgue points of $f$
have 1--Hausdorff 
measure 
zero \cite[Ch. 4, Rmk 4.4.5]{ziemer}. In particular, 
$\tr_{\partial \Omega} \ f$ coincides with the limit of $f$ at points
of $\partial\Omega$ where this limit exists.
The function $\tr_{\partial \Omega} \ f$  lies  in 
$W^{1-1/p,p}(\partial\Omega)$
\cite[{Thm} 7.47]{Adams}, \cite[Sec. 1.3.3]{gris},
and the trace operator defines a continuous
surjection from
$W^{1,p}(\Omega)$ onto  $W^{1-1/p,p}(\partial\Omega)$ with
continuous right 
inverse  \cite[Thm 1.5.1.3]{gris}.
The subspace $W^{1,p}_0(\Omega)$ of functions whose trace is 
zero coincides with the closure of $\mathcal{D}(\Omega)$ in $W^{1,p}(\Omega)$
\cite[Cor. 1.5.1.6]{gris}.
If $\Omega$ is connected,
a variant of the Poincar\'e inequality involving the trace is: 
for  $p>1$ and
$\Gamma\subset\partial\Omega$ a subset of strictly
positive measure $\Sigma(\Gamma)>0$,
there is 
$C>0$ depending only on $p$, $\Omega$ and $\Gamma$ such that for all $g \in W^{1,p}(\Omega)$
\begin{equation}
\label{estfoncb}
\Bigl\|g-\int_\Gamma\tr_{\partial\Omega} \,g \, d \Sigma\Bigr\|_{L^p(\Omega)}\le C
\|\nabla g\|_{L^p(\Omega)}.
\end{equation}
This follows from the continuity of the trace operator, the 
Rellich--Kondrachov theorem  and \cite[Lem. 4.1.3]{ziemer}.

We need mention Sobolev spaces of negative order in connection with 
duality of trace spaces: if $1<p<\infty$ and $1/p+1/q=1$ then,
since $W^{1/q,p}(\partial\Omega)$ embeds in $L^p(\partial\Omega)$,
each $g\in L^q(\partial\Omega)$ gives rise via
$h\mapsto\int_{\partial\Omega} g\bar hd\Sigma$ 
to a member of
$(W^{1/q,p}(\partial\Omega))'$,  the dual space of 
$W^{1/q,p}(\partial\Omega)$. As $W^{1/q,p}(\partial\Omega)$ is reflexive
(for it is uniformly convex), we see as in 
\cite[Sec. 3.13, 3.14]{Adams} 
that the completion 
$W^{-1/q,q}(\partial\Omega)$ of  $L^q(\partial\Omega)$ with respect to the norm
\[\|g\|_{W^{-1/q,q}(\partial\Omega)}:=\sup_{\|h\|_{W^{1/q,p}(\partial\Omega)}=1}\,\,\left|\int_{\partial\Omega} g\bar h\,d\Sigma\right|\] 
can be identified with $(W^{1/q,p}(\partial\Omega))'$. 
This we use when $p=q=2$ only.

\subsection{Non tangential  maximal function}
\label{ssec:ntmf}
For $\xi\in\partial\Omega$,
each $\alpha>1$ defines a  nontangential 
region of approach to $\xi$ from $\Omega$ given by
\begin{equation}
\label{rNT}
R^{\Omega}_\alpha(\xi)=\{x\in\Omega:\ |x-\xi|<\alpha \,\text{d}
(x,\partial\Omega)\}.
\end{equation}
When $\Omega$ is Lipschitz and bounded,  $R^{\Omega}_\alpha(\xi)$
contains a nonempty open truncated cone with vertex $\xi$, 
whose aperture and height are independent 
of $\xi$ \cite[Thm 1.2.2.2]{gris}.
Subsequently,
whenever $h$ is $\RR^k$-valued on $\Omega$, we define 
its nontangential maximal function (associated with $\alpha$) to be 
\begin{equation}
\label{defNT}
\mathcal{M}_\alpha h(\xi)=\sup_{x\in R^\Omega_\alpha(\xi)}|h(x)|,
\qquad\xi\in\partial\Omega,
\end{equation}
which is well-defined with values in $[0,+\infty]$.
Also, we say that $h$ defined on $\Omega$ converges nontangentially 
to $a$ at $\xi\in\partial\Omega$ if, for every $\alpha>1$,
\begin{equation}
\label{defntc}
\lim_{x\to\xi,\,x\in R^\Omega_\alpha(\xi)}h(x)=a.
\end{equation}

\subsection{Planar case}
\label{sec:pc}

In dimension $n=2$, 
$\partial\Omega$ is a curve and tangential differentiation produces
a total derivative. This makes for specific notation 
as follows. A simply connected Lipschitz domain $\Omega\subset\RR^2$  has
a rectifiable Jordan curve as boundary and
we  write $\Lambda$ (instead of $\Sigma$) for arclength measure on
$\partial\Omega$. 
We let $\tau$ and $n$ respectively indicate the unit tangent
and  (outwards pointing)  normal vector fields on $\partial\Omega$,
which are well defined in 
$L^\infty(\partial\Omega)\times L^\infty(\partial\Omega)$ 
\cite[Sec. 1.5.1]{gris}.
Here, $\tau$ is oriented so that $(n,\tau)$ is a positive frame $\Lambda$-a.e.

By what we said before,
$W^{1,p}(\partial\Omega)$  consists of
absolutely continuous functions with respect to $\Lambda$ 
whose derivative lies in $L^p(\partial\Omega)$.
We shall write $\partial_\tau h$ instead of
$d h/d\Lambda$.
If $\varphi$ is smooth on a neighborhood of
$\partial\Omega$ in $\RR^2$, then the restriction
$\psi=\varphi_{|\partial\Omega}$
belongs to $W^{1,\infty}(\partial\Omega)$ and 
$\partial_\tau\psi=\nabla\varphi.\tau$.
Using duality, one can extend the definition of tangential derivative 
to less smooth classes of functions, but  at this point we restrict the 
discussion to $p=2$ 
which is enough for our purposes\footnote{Appealing
to \cite[Ch. II, Thm 1.1]{Lions60} instead of 
\cite[Ch. I, Thm 6.2]{lions_magenes}, the same reasoning 
shows that $\partial_\tau$ is
continuous  $W^{1/q,p}(\partial\Omega)\to W^{-1/p,p}(\partial\Omega)$ 
for $1<p<\infty$, $1/p+1/q=1$.}.
For $f\in L^2(\partial\Omega)$,  
define
$\partial_\tau f\in (W^{1,2}(\partial\Omega))'$ to be the linear form
$h\mapsto-\int_{\partial\Omega}f\partial_\tau h d\Lambda$, 
$h\in W^{1,2}(\partial\Omega)$. This generalizes the  previous definition 
of $\partial_\tau$ when  $f\in W^{1,2}(\partial\Omega)$, for in this case
integration by parts shows that
the linear form just mentioned extends to a member of 
$(L^2(\partial\Omega))'\sim L^2(\partial\Omega)$ which is just 
$\partial_\tau f$ in the former  sense. Thus, by interpolation, 
$\partial_\tau$ is continuous from
$W^{1/2,2}(\partial\Omega)$
into the space $(W^{1/2,2}(\partial\Omega))'\sim W^{-1/2,2}(\partial\Omega)$. Indeed, from \cite[Ch.I, Thm 6.2]{lions_magenes}:
\[
[(L^2(\partial\Omega))', (W^{1,2}(\partial\Omega))']_{1/2,2}
=
([W^{1,2}(\partial\Omega),L^2(\partial\Omega)]_{1/2,2})'=
(W^{1/2,2}(\partial\Omega))' \, .\]

\section{Conductivity equation and Robin inverse problem}
\label{sec:cond}

{
In Section \ref{secc:cond} we {{introduce the conductivity equation
under study}}.
Sections \ref{secc:32} and \ref{sec:sp1} are dedicated to the associated forward Neumann and Robin problems.
Section \ref{sec:ur10} concerns the inverse Robin problem.
}
\subsection{The conductivity equation}
\label{secc:cond}
The conductivity equation with unknown real-valued 
function $u$ is
\begin{equation}
\label{forcond}
\nabla \cdot \left( \sigma \, \nabla u \right) =0\,,
\end{equation}
where ``$\nabla\cdot X$'' means  
``divergence of the vector field $X$''. Except in Section \ref{anisotropic}, we assume that the
conductivity $\sigma$ is  a real-valued function on
a bounded Lipschitz domain $\Omega\subset\RR^n$ satisfying
\begin{align}
\label{isot}
\quad&
\quad\sigma \in W_\RR^{1,r}(\Omega),\quad r>n,\\
\label{ellip}
\quad&\quad 0 < c \leq \sigma \leq 1/c < + \infty
\quad\text{for some constant}\ c.
\end{align}
The fact that $\sigma$ is real means that the 
conduction is isotropic. Condition \eqref{ellip} above means that \eqref{forcond} is
strictly elliptic. Condition \eqref{isot} is less restrictive than 
Lipschitz-regularity, but still it implies some H\"older-smoothness.  
{{Note that,}} since $r>n$, the space $W^{1,r}(\Omega)$ consists
of multipliers on $W^{1,2}(\Omega)$, see
\cite{Zolesio} or \cite[Thm 1.4.4.2]{gris}.

As \eqref{isot} and \eqref{ellip} together imply that 
$1/\sigma\in W^{1,r}_\RR(\Omega)$,  our assumptions
are thus to the effect  that multiplication by (the restriction to
$\partial\Omega$ of) $1/\sigma$ 
is an isomorphism on $W_\RR^{1/2,2}(\partial\Omega)$.
By duality, it follows that multiplication by $1/\sigma$ is an isomorphism
on $W_\RR^{-1/2,2}(\partial\Omega)$. This entails that
each $u\in W_\RR^{1,2}(\Omega)$ 
solving for \eqref{forcond} has 
a well-defined normal derivative  on 
$\partial\Omega$,  denoted by 
$\partial_n u\in W^{-1/2,2}_\RR(\partial\Omega)$. 
The standard definition 
is the weak one: if $J$ designates a right inverse to 
the trace operator 
$W_\RR^{1,2}(\Omega)\to W_\RR^{1/2,2}(\partial\Omega)$
and $\langle\ ,\ \rangle$ the duality pairing on
$W^{-1/2,2}_\RR(\partial\Omega)\times W_\RR^{1/2,2}(\partial\Omega)$,  then
$h\mapsto \int_\Omega \sigma \nabla u.\nabla(J(h)) \, d \, m_n$
is a continuous linear 
form on $W_\RR^{1/2,2}(\partial\Omega)$ which  can be represented uniquely 
as $\langle \phi, h\rangle$
for some $\phi\in W_\RR^{-1/2,2}(\partial\Omega)$. Since division by 
$\sigma$
is an isomorphism of $W_\RR^{-1/2,2}(\partial\Omega)$, we may set 
$\partial_n u=\phi/\sigma\in W_\RR^{-1/2,2}(\partial\Omega)$ and 
then it holds that
\begin{equation}
\label{defndw}
\langle \sigma\partial_nu\,, \,\tr_{\partial\Omega}\psi\rangle=
\int_\Omega \sigma \nabla u.\nabla \psi\,dm_n,\qquad \psi\in W^{1,2}_\RR(\Omega).
\end{equation}
Indeed, \eqref{defndw} holds by construction when  $\psi\in \text{Ran} \,  J$,
hence it is enough to check it when $\psi \in W^{1,2}_{0,\RR}(\Omega)$ 
in order to get it for all $\psi\in W_\RR^{1,2}(\Omega)$. 
By density, we are left to prove that 
$\int_\Omega \sigma\nabla u.\nabla\psi\,dm_n=0$ whenever
$\psi\in\mathcal{D}_\RR(\Omega)$ which is nothing but
the distributional meaning of \eqref{forcond}. Comparing \eqref{forcond} and 
\eqref{defndw} with the classical Green formula,  it is natural to 
call $\partial_n u$ the (exterior) normal derivative of $u$ on 
$\partial\Omega$.
Checking \eqref{defndw} against $ \psi\equiv1$, we observe in particular that
\begin{equation}
\label{compflux}
\langle \partial_n u\,,\,\sigma\rangle=0.
\end{equation}

\subsection{The Neumann problem}
\label{secc:32}
The Neumann problem in $W^{1,2}(\Omega)$ for the conductivity
equation \eqref{forcond}
is: given 
$g\in W_\RR^{-1/2,2}(\partial\Omega)$ such that 
$\langle g\,,\,\sigma\rangle=0$, to find $u\in W^{1,2}_\RR(\Omega)$
such that
\be \left\{\begin{array}{clc} 
&\nabla \cdot \left( \sigma \, \nabla u \right) =0\,\,\, {\rm in}\,\,\,\Omega,
&\\
&\displaystyle  \partial_n u= g\,\,\, {\rm on}\,\,\,\partial\Omega.&
\end{array} \right. 
\label{Neumann} 
\ee
Note that the vanishing of $\langle g\,,\,\sigma\rangle$
is necessary by \eqref{compflux}.
A solution to \eqref{Neumann} exists and is unique up to an 
additive constant. To check this well-known fact, simply observe that
$f\mapsto \langle \sigma g,\tr_{\partial\Omega} f\rangle$ is a continuous 
linear form on $W^{1,2}_\RR(\Omega)/\RR$ (the quotient space of 
$W^{1,2}_\RR(\Omega)$ modulo constants), a Hilbert norm on which is 
given by $\|\nabla f\|_{L^2(\Omega)}$ in view of  \eqref{estfonc}. 
As $\|\sigma^{1/2}\nabla f\|_{L^2(\Omega)}$ is an equivalent norm by 
\eqref{ellip}, we see upon denoting by $\check{u}\in W^{1,2}_\RR(\Omega)/\RR$ 
the equivalence class of 
$u\in W_\RR^{1,2}(\Omega)$ that
there is a unique $\check{u}$ 
to meet \eqref{defndw} with $\partial_n u$ replaced by $g$,
thanks to the Lax-Milgram theorem \cite[Cor. V.8]{Brezis}. 
As pointed out earlier,
this is equivalent to $u$ solving \eqref{Neumann}.
Such a  $u$ is called an energy solution to the Neumann problem.
\subsection{The forward Robin problem} 
\label{sec:sp1}
The forward Robin problem is an implicit variation of the Neumann problem
where the solution to \eqref{forcond} and
its normal derivative have to satisfy  an affine relation
with functional coefficients on the boundary. In particular, the normal 
derivative is sought to be a function rather than a distribution on
$\partial\Omega$. Below we consider a rather simple form of the problem, 
arising naturally in the setting of non-destructive control, where
the affine relation has $L^2$ right-hand side and bounded coefficient.
More general versions with  right-hand side in $L^p(\partial\Omega)$, 
$p\in(1,2]$, are studied in
\cite{lanzani-shen}.

Throughout we assume that $\partial \Omega$ is partitioned into 
measurable subsets $\Gamma$ and $\Gamma_0$ of strictly
positive arclength:
\begin{equation}
\label{partbord}
\partial \Omega={\Gamma} \cup {\Gamma_0}, \quad
\Gamma \cap \Gamma_0=\emptyset, \quad \Sigma(\Gamma) >0, \quad \Sigma(\Gamma_0) >0.
\end{equation}
We put for simplicity
\begin{equation}
\label{defL+}
L^\infty_+(\Gamma):=\{\la \in L_\RR^\infty(\Gamma)\, ,\  \la \geq 0 \mbox{ a.e. on } \Gamma \, , \ \la \not \equiv 0 \}.
\end{equation} 

Given $\la \in L_+^\infty(\Gamma)$ and
$g\in L^2(\Gamma_0)$, the forward Robin problem consists in seeking
$u \in W^{1,2}(\Omega)$ such that
\be \left\{\begin{array}{clc} 
&\nabla \cdot \left( \sigma \, \nabla u \right) =0\,\,\, {\rm in}\,\,\,\Omega,&\\
&\displaystyle \partial_n u= g\,\,\, {\rm on}\,\,\,\Gamma_0,&\\
&\displaystyle  \partial_n u + \la u= 0\,\,\, {\rm on}\,\,\,\Gamma.& 
\end{array} \right. 
\label{forward10} 
\ee
As $\tr_{\partial\Omega}u\in W^{1/2,2}_\RR(\partial\Omega)\subset L_\RR^2(\partial\Omega)$, boundary conditions make sense in that 
$g$ concatenated with 
$(-\lambda\tr_{\partial\Omega}u)_{|\Gamma}$  defines a member of 
$L^2_\RR(\partial\Omega)\subset W^{-1/2,2}_\RR(\partial\Omega)$. 

Replacing $g$ and $\la$ by $g/\sigma$ and $\la/\sigma$ respectively,
which is possible by \eqref{ellip}, 
solving \eqref{forward10}  is 
tantamount to obtain $u\in W^{1,2}_\RR(\Omega)$ satisfying
\be \left\{\begin{array}{clc} 
&\nabla \cdot \left( \sigma \, \nabla u \right) =0\,\,\, {\rm in}\,\,\,\Omega&\\
&\displaystyle \sigma \, \partial_n u= g\,\,\, {\rm on}\,\,\,\Gamma_0&\\
&\displaystyle \sigma \, \partial_n u + \la u= 0\,\,\, {\rm on}\,\,\,\Gamma.& 
\end{array} \right. 
\label{forward1} 
\ee
In view of \eqref{defndw}, problem (\ref{forward1}) is equivalent to
the following  weak formulation: to find $u$ in $W^{1,2}(\Omega)$ such that 
\be
\label{weak1}
\int_\Omega \sigma \, \nabla u \cdot \nabla \psi\,dm_n + \int_{\Gamma}\la u\psi\,d \Sigma = \int_{\Gamma_0}g\psi\,d\Sigma,\qquad \psi\in W^{1,2}_\RR(\Omega).
\ee
{{As soon as $\sigma\in L^\infty_\RR(\Omega)$}}, well-posedness of problem (\ref{weak1}), that is, existence and uniqueness of
a solution $u\in W^{1,2}(\Omega)$,  follows at once from 
the Lax-Mil\-gram theorem and 
Lemma \ref{lem:equivnorm1} below. 
Further, as a consequence of 
\eqref{weak1}, it holds that
\[
\int_{\Gamma}\la u\,d \Sigma = \int_{\Gamma_0}g\,d\Sigma \, .
\]
\begin{lemma}
\label{lem:equivnorm1}
Let $\Omega\subset\RR^n$ be a bounded Lipschitz domain, 
$\sigma\in L^\infty_\RR(\Omega)$ satisfy \eqref{ellip},
and $\lambda\in L_+^\infty(\Gamma)$
for some
$\Gamma\subset\partial\Omega$ such that $\Sigma(\Gamma)>0$.
Then, 
\[
u\mapsto\left(\int_\Omega \sigma \, \left|\nabla u\right|^2\,dm_n + \int_{\Gamma} \la \, u^2\,d \Sigma\right)^{1/2}\]
is an equivalent norm on $W^{1,2}(\Omega)$.
\end{lemma}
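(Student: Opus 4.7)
The plan is to establish both a straightforward upper bound and a more delicate lower bound for the proposed norm against $\|\cdot\|_{W^{1,2}(\Omega)}$. Write
\[
N(u)^2 := \int_\Omega \sigma |\nabla u|^2\,dm_n + \int_\Gamma \lambda u^2\,d\Sigma.
\]
First I would verify the norm axioms: positive homogeneity and the triangle inequality follow from $L^2$-type structure; positive definiteness uses that $N(u)=0$ forces $\nabla u=0$ a.e.\ in $\Omega$ (by \eqref{ellip}), hence $u$ is constant on the connected Lipschitz domain $\Omega$, and then $\int_\Gamma \lambda u^2 d\Sigma=0$ together with $\lambda\in L^\infty_+(\Gamma)$ (i.e.\ $\lambda\geq 0$ and $\lambda\not\equiv 0$) forces that constant to be zero.

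For the upper bound $N(u)\le C\|u\|_{W^{1,2}(\Omega)}$, I would simply estimate $\int_\Omega \sigma|\nabla u|^2 \le (1/c)\|\nabla u\|_{L^2(\Omega)}^2$ via \eqref{ellip}, and $\int_\Gamma \lambda u^2 d\Sigma \le \|\lambda\|_{L^\infty(\Gamma)}\|\tr_{\partial\Omega}u\|_{L^2(\partial\Omega)}^2$, bounded in turn by a constant times $\|u\|_{W^{1,2}(\Omega)}^2$ via continuity of the trace operator from $W^{1,2}(\Omega)$ to $W^{1/2,2}(\partial\Omega) \hookrightarrow L^2(\partial\Omega)$.

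The main obstacle is the reverse inequality $\|u\|_{W^{1,2}(\Omega)}\le C\,N(u)$, since controlling $\|u\|_{L^2(\Omega)}$ purely by $\|\nabla u\|_{L^2(\Omega)}$ is impossible (constants are in the kernel of the gradient). My approach is to use the Poincar\'e-type inequality \eqref{estfoncb} anchored on a set where $\lambda$ is bounded away from zero. Since $\lambda\ge 0$ and $\lambda\not\equiv 0$, the sets $E_k:=\{x\in\Gamma:\lambda(x)>1/k\}$ increase to $\{\lambda>0\}$, so $\Sigma(E_k)>0$ for some $k$; fix such $E:=E_k$. Applying \eqref{estfoncb} with $p=2$ and this $E$ gives
\[
\|u-u_E\|_{L^2(\Omega)} \le C\|\nabla u\|_{L^2(\Omega)}, \qquad u_E = \frac{1}{\Sigma(E)}\int_E \tr_{\partial\Omega} u\,d\Sigma.
\]
By Jensen's inequality and $\lambda\ge 1/k$ on $E$,
\[
|u_E|^2 \le \frac{1}{\Sigma(E)}\int_E u^2\,d\Sigma \le \frac{k}{\Sigma(E)}\int_E \lambda u^2\,d\Sigma \le \frac{k}{\Sigma(E)}\int_\Gamma \lambda u^2\,d\Sigma.
\]

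Combining the two preceding estimates with $\|u\|_{L^2(\Omega)}^2 \le 2\|u-u_E\|_{L^2(\Omega)}^2 + 2m_n(\Omega)|u_E|^2$, and using $\|\nabla u\|_{L^2(\Omega)}^2 \le (1/c)\int_\Omega \sigma|\nabla u|^2\,dm_n$ from \eqref{ellip}, one obtains
\[
\|u\|_{W^{1,2}(\Omega)}^2 = \|u\|_{L^2(\Omega)}^2 + \|\nabla u\|_{L^2(\Omega)}^2 \le C'\, N(u)^2,
\]
with $C'$ depending only on $\Omega$, $c$, $\Gamma$ and $\lambda$ (through $k$ and $\Sigma(E)$). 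Together with the upper bound this yields the claimed equivalence of norms. The one subtle point to check is the applicability of \eqref{estfoncb} to a boundary subset (which is the trace-version stated in the preamble); otherwise the argument is elementary.
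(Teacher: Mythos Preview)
Your proof is correct and follows essentially the same route as the paper: the upper bound via boundedness of $\sigma$, $\lambda$ and the trace theorem, and the lower bound by passing to a subset $E\subset\Gamma$ on which $\lambda\geq\varepsilon>0$ and invoking the trace-Poincar\'e inequality \eqref{estfoncb} (your Jensen step corresponds to the paper's use of the Schwarz inequality). The only cosmetic difference is that \eqref{estfoncb} as stated in the paper uses the unnormalized integral rather than the average $u_E$, but this is absorbed into the constant.
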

\begin{proof}
We must show that there exist two constants $c,C>0$ such that
\[c\,\left\|\psi\right\|^2_{W^{1,2}(\Omega)} \leq \int_\Omega \sigma \, \left|\nabla \psi\right|^2\,dm_n + \int_{\Gamma} \la \, \psi^2\,d \Sigma 
\leq C\,\left\|\psi\right\|^2_{W^{1,2}(\Omega)} \, , \quad  
\psi \in W_\RR^{1,2}(\Omega).\]
The right inequality follows from the boundedness of 
$\sigma,\lambda$, together with the continuity of the trace operator and 
the embedding $W^{1/2,2}(\partial\Omega)\to L^2(\partial\Omega)$.
To prove the left inequality  we can replace $\Gamma$ by a subset on which 
$\lambda\geq\varepsilon>0$, and then the result drops out from 
\eqref{estfoncb}, the Schwarz inequality, and the fact that $\sigma$ is bounded away from $0$ by \eqref{ellip}.
\end{proof}
\subsection{The inverse Robin problem}
\label{sec:ur10}

Associated to the forward Robin problem
\eqref{forward1} is the inverse Robin problem,
which consists in finding the 
unknown impedance $\la$ in $L_+^\infty(\Gamma)$ 
from the knowledge of $u$ and $g$  on $\Gamma_0$. Note that a solution 
$u$ to \eqref{forward1} uniquely exists in $W^{1,2}(\Omega)$,
as was pointed out before Lemma \ref{lem:equivnorm1} above. In the setting of 
nondestructive control,
$\Gamma_0$  represents that part of the boundary $\partial\Omega$ which is 
accessible to pointwise  {{measurement or imposition}} 
of $u$ and $\partial_n u$. 

{{\emph{In this work, we consider the uniqueness issue as to whether $\lambda$ 
is uniquely determined by $g$ and $u_{|\Gamma_0}$ when $\Omega$ is a bounded contractible
Lipschitz domain.}}} For general partitions 
of the boundary like \eqref{partbord}, 
it will turn out that the answer is ``yes''
when $n=2$ and ``no'' when $n\geq3$. Pointing out
this structural difference 
between the planar and  the higher dimensional cases is the main purpose 
of the present { article}. 

\section{Uniqueness results}
\label{sec:ur00} 

{ The two uniqueness theorems in Section \ref{sec:ur1} are the main  results of this work. Section \ref{ce} provides a counterexample in dimension 3.}

\subsection{Inverse Robin problem in dimension 2: uniqueness results}
\label{sec:ur1} 

In this section we investigate the planar case: $\Omega\subset\RR^2$, 
in particular it is understood throughout that
$n=2$ in \eqref{isot} and we write $\Lambda$ instead of $\Sigma$
in \eqref{partbord}.

\begin{theorem}
Assume that $\Omega\subset\RR^2$ is a bounded simply connected  Lipschitz 
domain and that \eqref{partbord} holds.
Let $\sigma$ satisfy  \eqref{isot}-\eqref{ellip}
and $g \in L^2(\Gamma_0)$ {{be}} such that $g \not\equiv 0$. Suppose $\la_1,\la_2 \in L^\infty_+(\Gamma)$ 
are such that the corresponding solutions $u_1,u_2 \in W_\RR^{1,2}(\Omega)$ 
to problem (\ref{forward1}) satisfy $u_{1|_{\Gamma_0}}=u_{2|_{\Gamma_0}}$.
Then $\la_1=\la_2$.
\label{main1}
\end{theorem}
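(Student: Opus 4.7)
The plan is to reduce the inverse problem to a unique continuation statement for the conductivity equation on $\Omega\subset\RR^2$, and then handle that continuation by the complex-analytic machinery alluded to in the introduction.

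I would first set $w=u_1-u_2\in W^{1,2}_\RR(\Omega)$. By linearity of the boundary value problem \eqref{forward1}, $w$ solves $\nabla\cdot(\sigma\nabla w)=0$ in $\Omega$; the hypothesis $u_{1|\Gamma_0}=u_{2|\Gamma_0}$ gives $\tr_{\Gamma_0}w=0$, and subtracting the Neumann conditions $\sigma\partial_n u_j=g$ on $\Gamma_0$ yields $\sigma\partial_n w=0$, hence $\partial_n w=0$ on $\Gamma_0$ by \eqref{ellip}. Thus $w$ has vanishing Cauchy data on $\Gamma_0$, a subset of $\partial\Omega$ of positive arclength.

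Next I would invoke the factorization $\partial w=e^{s}f$ with $s$ H\"older continuous on $\overline\Omega$ (so $e^{s}$ is smoothly invertible) and $f$ holomorphic in $\Omega$. Since $w=0$ on $\Gamma_0$ forces $\partial_\tau w=0$ there and, by construction, $\partial_n w=0$ on $\Gamma_0$, the full gradient $\nabla w$ vanishes on $\Gamma_0$; hence $\partial w$ vanishes on $\Gamma_0$, and therefore so does $f$. Because $\Omega$ is simply connected and the regularity of the Neumann solution places $f$ in a Hardy--Smirnov class of $\Omega$, the nontangential boundary values of $f$ cannot vanish on a positive-measure subset of $\partial\Omega$ unless $f\equiv 0$. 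Consequently $\partial w\equiv 0$; since $w$ is real, this forces $w$ to be constant, and the constant must be zero because $w=0$ on $\Gamma_0$. Hence $u_1\equiv u_2$ in $\Omega$.

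Subtracting the two Robin conditions on $\Gamma$ now gives $(\lambda_1-\lambda_2)\tr_\Gamma u_1=0$ $\Lambda$-a.e. on $\Gamma$. To conclude $\lambda_1=\lambda_2$ I must rule out that $\tr_\Gamma u_1$ vanishes on a subset $E\subset\Gamma$ of positive arclength. Suppose for contradiction such an $E$ exists. Because $g\in L^2(\Gamma_0)$ and $\lambda_1 u_1\in L^2(\Gamma)$, the regularity theory for the mixed problem in two dimensions delivers $\tr_{\partial\Omega}u_1\in W^{1,2}(\partial\Omega)$, so the Rolle-type theorem for $W^{1,2}$-functions on the line, read in local arclength charts, gives $\partial_\tau u_1=0$ $\Lambda$-a.e. on $E$. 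The Robin condition $\sigma\partial_n u_1=-\lambda_1 u_1$ then yields $\partial_n u_1=0$ a.e. on $E$, so $u_1$ has vanishing Cauchy data on $E$; re-running the factorization/Hardy--Smirnov argument of the previous paragraph forces $u_1\equiv 0$ in $\Omega$ and hence $g=\sigma\partial_n u_1=0$ on $\Gamma_0$, contradicting $g\not\equiv 0$. Therefore $u_1\neq 0$ $\Lambda$-a.e. on $\Gamma$, and $\lambda_1=\lambda_2$ follows.

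The main obstacle is the planar unique continuation itself: on a merely Lipschitz $\Omega$ with $W^{1,r}$-conductivity and $L^2$ mixed data, one must promote $w$ (and $u_1$) to enough regularity that the complex derivative factors through a holomorphic $f$ belonging to a genuine Hardy--Smirnov space (so that zero sets of positive measure on the boundary really do force $f\equiv 0$), and one must simultaneously secure $\tr_{\partial\Omega}u_1\in W^{1,2}(\partial\Omega)$, without which the Rolle step in the final paragraph has no meaning. These two regularity statements are precisely the analytic core that the authors develop in their dedicated factorization and regularity section.
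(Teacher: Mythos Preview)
Your proposal is correct and follows the same route as the paper: you inline the unique continuation result (Theorem~\ref{cauchy1}) twice---once for $w=u_1-u_2$ on $\Gamma_0$, once for $u_1$ on $E\subset\Gamma$---using exactly the factorization $\partial u=e^{\Psi}\Phi$ with $\Phi\in\mathcal{S}^2(\Omega)$, the $W^{1,2}(\partial\Omega)$ trace regularity, and the Rolle-type lemma that the paper develops as Theorem~\ref{normeq3/2} and Proposition~\ref{real}. The only cosmetic difference is that the paper packages the unique continuation as a separate theorem and invokes it as a black box, whereas you unpack it in place; your closing paragraph correctly identifies that the substantive work lies in those regularity and factorization statements.
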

Theorem \ref{main1} will be a
consequence of the following unique continuation result which is
proved in Section \ref{sec:proof1}.
\begin{theorem}
\label{cauchy1}
Assume that $\Omega\subset\RR^2$ is a bounded simply connected Lipschitz 
domain and that $\sigma$ satisfies  \eqref{isot}-\eqref{ellip}.
Let $u \in W^{1,2}_\RR(\Omega)$ be a solution to
\eqref{forcond} 
in $\Omega$ such that $\partial_n u\in L^2(\partial\Omega)$. 
If both $u$ and $\partial_n u$ 
vanish on a subset 
$\gamma \subset \partial \Omega$ of {{strictly}} positive measure, 
then $u\equiv0$  in $\Omega$.
\end{theorem}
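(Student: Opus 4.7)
The plan is to combine three ingredients exactly in the spirit announced in the introduction: a boundary regularity step that upgrades $u$ from $W^{1,2}(\Omega)$ to something with an honest $L^2$ tangential derivative on $\partial\Omega$, a Rolle-type argument that transfers the vanishing of $u$ on $\gamma$ to the vanishing of $\partial_\tau u$ on $\gamma$, and finally a Hardy--Smirnov rigidity statement applied through the factorization $\partial u = h\,F$.

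\medskip

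\textbf{Step 1: boundary regularity.} Assuming $\partial_n u \in L^2(\partial\Omega)$ is the standard ``regularity problem'' hypothesis. Since $\sigma\in W^{1,r}(\Omega)$ with $r>2$ is a multiplier on $W^{1,2}(\Omega)$ and is Hölder continuous, $u$ satisfies in $\Omega$ the divergence-form equation with this conductivity, and I would use the Jerison--Kenig style estimates (which the authors invoke later, cf.\ \cite{JKinhom} and \eqref{inegfracdb}) together with the multiplier properties of $\sigma$ to conclude that $u\in W^{3/2,2}(\Omega)$ and $\tr_{\partial\Omega} u\in W^{1,2}(\partial\Omega)$. In particular the tangential derivative $\partial_\tau u$ is in $L^2(\partial\Omega)$ in the honest (almost-everywhere defined) sense.

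\medskip

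\textbf{Step 2: Rolle-type vanishing of $\partial_\tau u$ on $\gamma$.} Pulling $\tr_{\partial\Omega}u$ back through a bi-Lipschitz parametrization of $\partial\Omega$ (which preserves sets of positive measure and Sobolev classes) reduces the question to: if $f\in W^{1,2}(I)$ on an interval $I\subset\RR$ vanishes on a measurable set $E$ with $|E|>0$, then $f'=0$ a.e.\ on $E$. This is the $W^{1,2}$ Rolle-type theorem alluded to in the introduction; for every Lebesgue point $x_0$ of $f'$ that is also a point of Lebesgue density $1$ of $E$, pick a sequence $x_n\in E$, $x_n\to x_0$, and use absolute continuity to write the difference quotient
\[
\frac{f(x_n)-f(x_0)}{x_n-x_0}=0,
\]
which forces $f'(x_0)=0$. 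Together with the assumption $\partial_n u=0$ on $\gamma$, this shows $\nabla u=0$, hence $\partial u=0$, on a subset $\gamma'\subset\gamma$ of full measure in $\gamma$.

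\medskip

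\textbf{Step 3: factorization and Hardy--Smirnov rigidity.} This is the heart of the proof and the step I expect to be the main obstacle on a bare Lipschitz domain. Following the factorization that the authors will establish in Section \ref{sec:proof1}, I would write
\[
\partial u \;=\; h\,F\qquad\text{in }\Omega,
\]
where $F$ is holomorphic and belongs to a Hardy--Smirnov class $E^{p}(\Omega)$ for some $p>1$, and $h$ is continuous and nowhere vanishing on $\overline\Omega$ (essentially $h=e^{-s}$ with $s$ a real-valued solution of $\partial s$ prescribed by $\sigma$, built via a Cauchy-type integral; in a simply connected domain there is no topological obstruction to this construction). By Step 2, the nontangential boundary values of $F$ vanish on the set $\gamma'$ of positive measure. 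Hardy--Smirnov functions in $E^p(\Omega)$ on a Lipschitz domain enjoy the standard F.\ and M.\ Riesz / Lusin--Privalov rigidity: if they vanish on a boundary subset of positive arclength, they vanish identically (this is reviewed in Section \ref{secH00}). Therefore $F\equiv 0$, hence $\partial u\equiv 0$ in $\Omega$, hence $\nabla u \equiv 0$, so $u$ is constant in the connected open set $\Omega$; since $u=0$ on $\gamma$, that constant is $0$.

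\medskip

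The genuinely delicate point is Step 3: on a Lipschitz domain, arranging that the factor $h$ be globally continuous and invertible and that $F$ land in the \emph{right} Smirnov class—so that boundary values exist nontangentially a.e.\ and the vanishing-on-positive-measure rigidity applies—requires care with the $W^{1,r}$ regularity of $\sigma$, the simple connectedness of $\Omega$, and the reduction of the conductivity equation to an (inhomogeneous) Cauchy--Riemann system. Step 1 (the $W^{3/2,2}$ regularity) and the Rolle step are comparatively mild and essentially classical.
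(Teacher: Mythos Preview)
Your proposal is correct and follows essentially the same three-step architecture as the paper: regularity of the boundary trace, a Rolle-type lemma to kill $\partial_\tau u$ on a subset of $\gamma$, and the factorization $\partial u=e^{\Psi}\Phi$ with $\Phi\in\mathcal{S}^2(\Omega)$ followed by Smirnov rigidity.

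Two remarks are worth making. First, your density-point argument in Step~2 is actually more direct than the paper's Proposition~\ref{real}, which detours through a Lusin-type $C^1$ approximation (Lemma~\ref{LusinSob}) before applying classical Rolle; your version gives $\partial_\tau u=0$ $\Lambda$-a.e.\ on $\gamma$ rather than merely on a subset of positive measure, which is stronger than needed but cleaner. Second, your assessment that Step~1 is ``comparatively mild and essentially classical'' via Jerison--Kenig estimates is slightly off: those estimates are for the Laplacian, and in the paper the $W^{3/2,2}$ regularity and the membership $\tr_{\partial\Omega}u\in W^{1,2}(\partial\Omega)$ are not obtained independently but are \emph{consequences} of the factorization in Theorem~\ref{normeq3/2}, which reduces the variable-$\sigma$ equation to the harmonic case through Lemma~\ref{BNu}. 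So Steps~1 and~3 are intertwined rather than sequential, and the factorization carries both the regularity and the rigidity.
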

\begin{proof} ({\it Theorem \ref{main1}}) 
{By assumption, $u_1$ and $u_2$ have the same Cauchy data on 
$\Gamma_0\subset \partial \Omega$ with $\Lambda(\Gamma_0) > 0$,
so Theorem \ref{cauchy1} implies that $u_1 \equiv u_2$ in 
$\overline{\Omega}$
whence $(\la_1-\la_2)\, u_1=0$ on $\Gamma$ by the
Robin boundary condition.
Assume for a contradiction that $\la_1 \neq \la_2$ {a.e.} on $\Gamma$. 
Then, there exists a subset $\gamma \subset \Gamma$, $\Lambda(\gamma) >0$, such that
$\la_1 - \la_2 \neq 0$ on $\gamma$, and of necessity 
$u_1$ vanishes on $\gamma$ by what precedes. 
In turn $\partial_n u_1= - \la_1 \, u_1/\sigma$ vanishes identically on 
$\gamma$, therefore Theorem \ref{cauchy1} implies  that
$u_1\equiv0$ in $\Omega$. Consequently $\partial_n u_1= 0$ a.e. on 
$\partial \Omega$, thereby contradicting the assumption 
that $g \not\equiv 0$ in $\Gamma_0$.}
\end{proof}

The proof of Theorem \ref{cauchy1}  (see Section \ref{sec:proof1} and Theorem \ref{normeq3/2}) ultimately rests on the fact that,
in dimension 2, a harmonic gradient ({\it i.e.} the conjugate 
of a holomorphic function if we identify $\CC$ with $\RR^2$)  
which has nontangential limit zero on a subset of 
$\partial\Omega$ of positive measure is identically zero (see Section \ref{secH00}).

This is no longer
true in higher dimension, as illustrated in the next section.

\subsection{{{Examples of  non uniqueness }}  in  higher dimension}
\label{ce}

An initial example was constructed in  \cite[Thm 1]{Wolff1} of a 
nonconstant harmonic function on a half space in $\RR^3$,
with H\"older-continuous  derivatives up to the boundary, whose gradient vanishes on a boundary
set $E$ 
with $m_2(E)>0$, see also \cite{AK}.
In \cite{bourgain_wolff}, this construction was refined to the effect
that there is a nonzero harmonic 
function on a half space, $C^1$-smooth up to the boundary, that vanishes 
together  with its normal derivative on a boundary set $E$ with $m_2(E)>0$. 
In fact, such examples can be constructed on any open subset of
$\RR^n$, $n\geq3$, whose boundary is a $C^{1,\varepsilon}$ manifold
\cite{WW}. This shows that Theorem \ref{cauchy1}  does not hold  in 
dimension strictly bigger than 2, and
casts doubt on whether an analog to Theorem \ref{main1} can hold 
in higher dimension. Indeed, the example below shows that it cannot, 
already for harmonic functions on 
smooth domains. 

Hereafter, we denote by $\mathbb{B}^3\subset\RR^3$ the 
open unit ball and by $\mathbb{S}^2$ the boundary sphere (recall the  definition \eqref{defL+} of $L^\infty_+(\Gamma)$).
\begin{example}
\label{rmk2ex}
{{Let $u$ be a nonzero harmonic function in $\mathbb{B}^3$, of class $C^1$ on $\overline{\mathbb{B}^3}$, 
such that $u_{|E}=(\partial_n u)_{|E}=0$ where $E\subset\mathbb{S}^2$, with $\Sigma(E)>0$.
If in problem (\ref{forward10}) we set:
\[\Gamma_0=\{\xi\in\mathbb{S}^2,\ u^2(\xi)+\partial_nu^2(\xi)\neq0\},
\qquad
g=\partial_nu_{|\Gamma_0},\] 
then 
$\Gamma:=\mathbb{S}^2\setminus\Gamma_0$ contains $E$ hence it}}  has strictly positive 
$\Sigma$-measure,
but clearly $\lambda$ can be arbitrary in $L^\infty_+(\Gamma)$ 
since $u_{|\Gamma}=\partial_nu_{|\Gamma}\equiv0$.
\end{example}
Example \ref{rmk2ex} shows that a  solution to \eqref{forward10}
may be associated to all Robin functions. This is an extreme example of non
uniqueness which,  however, is not fully satisfactory in that it is highly non
generic and will be destroyed by small  perturbations of the Neumann boundary
data $g$ on $\Gamma_0$. 
The theorem below gives another example of non uniqueness which is easily seen
to be stable
under $L^p(\Gamma_0)$-small perturbations of  $g$, for $p >2$.
\begin{theorem}
\label{CEBW}
Set $\sigma\equiv1$ on $\mathbb{B}^3$. Then,
there is a partition $\mathbb{S}^2={\Gamma} \cup {\Gamma_0}$ of the form
\eqref{partbord}, along with functions $g\in L_\RR^2(\Gamma_0)$
and $\lambda_1\neq \lambda_2\in L^\infty_+(\Gamma)$  
such that the corresponding solutions 
$u_1$, $u_2$ to
\eqref{forward10} on $\mathbb{B}^3$, though distinct, satisfy $(u_1)_{|\Gamma_0}=
(u_2)_{|\Gamma_0}$.
\end{theorem}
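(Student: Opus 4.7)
The plan is to exploit the Bourgain--Wolff construction to produce two distinct harmonic functions on $\mathbb{B}^3$ that share Cauchy data on a positive-measure boundary set $E$ but satisfy Robin conditions with different coefficients on its complement. I would first invoke \cite{bourgain_wolff} (see also \cite{WW}) to obtain a nonzero harmonic function $v\in C^1(\overline{\mathbb{B}^3})$ with $v=\partial_n v=0$ on a measurable $E\subset\mathbb{S}^2$ of strictly positive area. Set $\Gamma_0:=E$ and $\Gamma:=\mathbb{S}^2\setminus E$. This is a partition of the form \eqref{partbord} because $\Sigma(\Gamma)>0$: otherwise $v$ would vanish a.e. on $\mathbb{S}^2$, hence identically in $\mathbb{B}^3$ by uniqueness of the Dirichlet problem, contradicting $v\not\equiv 0$.

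Next I would construct a strictly positive harmonic baseline $u_1$ on $\overline{\mathbb{B}^3}$ with $\partial_n u_1\equiv-1$ on $\Gamma$. Let $\phi$ be the harmonic function on $\mathbb{B}^3$ with Neumann data $\partial_n\phi=-\chi_{\Gamma}+c\,\chi_{\Gamma_0}$ on $\mathbb{S}^2$, where $c:=\Sigma(\Gamma)/\Sigma(\Gamma_0)$; this data is in $L^\infty(\mathbb{S}^2)$ with zero mean, so $\phi$ exists uniquely after normalising $\phi(0)=0$, and by standard regularity for the Neumann problem on the smooth domain $\mathbb{B}^3$ it is bounded (in fact H\"older continuous) on $\overline{\mathbb{B}^3}$. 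Pick $A>\|\phi\|_{L^\infty}$ and set $u_1:=A+\phi$, so that $u_1\ge A-\|\phi\|_{L^\infty}>0$, $\partial_n u_1=-1$ on $\Gamma$, and $\partial_n u_1=c$ on $\Gamma_0$. Define $g:=\partial_n u_1|_{\Gamma_0}\equiv c\in L^2_\RR(\Gamma_0)$ and $\lambda_1:=-\partial_n u_1/u_1=1/u_1$ on $\Gamma$; since $u_1$ is trapped between two strictly positive constants, $\lambda_1\in L^\infty_+(\Gamma)$ and $u_1$ solves \eqref{forward10} with data $(\lambda_1,g)$.

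Then I would choose $\varepsilon>0$ small enough that $\varepsilon\|v\|_{L^\infty(\overline{\mathbb{B}^3})}<A-\|\phi\|_{L^\infty}$ and $\varepsilon\|\partial_n v\|_{L^\infty(\mathbb{S}^2)}<1$, and set $u_2:=u_1+\varepsilon v$. The first inequality yields $u_2>0$ on $\overline{\mathbb{B}^3}$, the second gives $\partial_n u_2=-1+\varepsilon\partial_n v<0$ on $\Gamma$, so $\lambda_2:=-\partial_n u_2/u_2$ lies in $L^\infty_+(\Gamma)$. Because $v=\partial_n v=0$ on $\Gamma_0$, the functions $u_1$ and $u_2$ share both trace and normal derivative $g$ on $\Gamma_0$, hence $u_2$ solves \eqref{forward10} with data $(\lambda_2,g)$. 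Finally, $u_2-u_1=\varepsilon v\not\equiv 0$; as the forward Robin problem has a unique solution for each coefficient pair (Section \ref{sec:sp1}), the equality $\lambda_1=\lambda_2$ would force $u_1=u_2$, a contradiction. Therefore $\lambda_1\neq\lambda_2$, completing the example.

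The main obstacle is keeping the ratios $\lambda_i=-\partial_n u_i/u_i$ uniformly nonnegative and bounded on the possibly very rough set $\Gamma$ inherited from the non-constructive Bourgain--Wolff example. Inflating $u_1$ by a large additive constant $A$ and scaling $v$ by a small parameter $\varepsilon$ supplies quantitative positivity of $u_i$ and strict negativity of $\partial_n u_i$ on $\Gamma$, uniformly in the unknown geometry of $E$, which circumvents the difficulty; the distinctness of $\lambda_1$ and $\lambda_2$ then comes for free from uniqueness of the forward Robin problem, without having to compare the two coefficients pointwise.
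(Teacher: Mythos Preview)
Your proof is correct and follows essentially the same strategy as the paper: invoke Bourgain--Wolff for a nonzero harmonic function vanishing with its normal derivative on a positive-measure boundary set, build a strictly positive baseline harmonic function with strictly negative normal derivative on the complement, and perturb by the Bourgain--Wolff function to obtain two distinct solutions with identical Cauchy data on $\Gamma_0$. The only cosmetic differences are your use of an $\varepsilon$-scaling of the perturbation (versus the paper's choice of sufficiently large Neumann data $h$) and your appeal to forward-Robin uniqueness to conclude $\lambda_1\neq\lambda_2$ (versus the paper's direct verification that $v\,\partial_n u - h\,u\not\equiv 0$ on $\Gamma$).
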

\begin{proof} 
Let $\Gamma_0\subset\mathbb{S}^2$, 
$\Sigma(\Gamma_0)>0$, have the property that 
there is a nonzero harmonic function $u$ in $\mathbb{B}^3$,
of class $C^1$ on $\overline{\mathbb{B}^3}$, 
with $u_{|\Gamma_0}=(\partial_n u)_{|\Gamma_0}=0$. Such a $\Gamma_0$ exists by  { {\cite{bourgain_wolff}}}. Let 
$h \in L^\infty_\RR(\mathbb{S}^2)$ be such that $0<c<-h<C$ on 
$\Gamma=\mathbb{S}^2\setminus\Gamma_0$
for some constants $c,C$, and moreover $\int_{\mathbb{S}^2}hd\Sigma=0$.
In addition, we pick $c$ large enough that $h<-|\partial_n u|-1$ on 
$\Gamma$.
Let $v$ be a solution to the Neumann problem \eqref{Neumann} where
$\sigma\equiv1$, $\Omega=\mathbb{B}^3$, and
$\partial_n v=h$. On the sphere, the ``Riesz tranform'' mapping the normal derivative 
of a harmonic function $w$ in $\mathbb{B}^3$ 
to its tangential gradient vector field 
is continuous in $L^p$-norm for $1<p<\infty$; this follows easily by dominated 
convergence from the fact that, for each $\alpha>1$,
$\|\mathcal{M}_\alpha \nabla w\|_{L^p(\mathbb{S}^2)}\leq
C_\alpha \, \|\partial_n w\|_{L^p(\mathbb{S}^2)}$, see \cite[Thm 2.6]{FJR}.
Therefore $v_{|\mathbb{S}^2}\in W^{1,p}_\RR(\mathbb{S}^2)$ for all 
$p\in(1,\infty)$, hence it is bounded by the Sobolev embedding theorem.
Thus, upon adding a positive constant to $v$, we may assume that 
$v>|u|+1$ on $\Gamma$ and that the function $v\partial_n u-h u$ does not identically vanish on $\Gamma$. Now, letting
$\lambda_1=-h/v$ and $\lambda_2=-(h+\partial_n u)/(u+v)$ on $\Gamma$, we 
have that $\lambda_1,\lambda_2 \in L^\infty_+(\Gamma)$, $\lambda_1 \not \equiv \lambda_2$, while the functions $u_1=v$ and $u_2=v+u$ coincides together with their normal 
derivatives on
$\Gamma_0$, as desired.
\end{proof}
{{Counterexamples similar to the one in}}  Theorem \ref{CEBW}
can be constructed in any dimension greater than 3. 
\begin{remark}
\label{rmk2}
Whenever $\sigma \in W^{1,\infty}_\RR(\Omega)$ and $\gamma$  contains an 
open subset of $\partial \Omega$, it is not difficult to deduce
from the unique continuation result in \cite{GaLi} that
the analog of Theorem \ref{cauchy1} holds for any $n\geq2$. 
However,
Example \ref{rmk2ex} shows that assuming $\Gamma_0$ open cannot rescue 
 a higher dimensional analog of Theorem  \ref{main1}.
The situation becomes  more interesting if we assume that the interiors of 
$\Gamma_0$ and $\Gamma$ fill $\mathbb{S}^2$ up to a set of $\Sigma$-measure 
zero. Then, proving or disproving the analog of Theorem \ref{main1} when
$n\geq3$ is tantamount to decide if a solution to \eqref{Neumann} that
vanishes together with its normal derivative on some $E\subset\partial\Omega$,
with $\Sigma(E)>0$, can be such that $\partial_nu/u$ is 
(essentially) bounded and nonnegative in a neighborhood of $E$ 
in $\partial\Omega$.
This question seems to be open, even for harmonic functions in a ball.
\end{remark}

\section{Hardy-Smirnov classes of holomorphic functions}
\label{secH00}

{
In Section \ref{sec:Hardy} we review Hardy spaces and conjugate 
functions on the disk,  as well as conformal 
maps onto simply connected  Lipschitz domains. 
This we use in Section \ref{sec:Smirnov} 
to discuss Smirnov spaces on Lipschitz domains, in particular of
exponent 2.  There, we bridge classical material from complex analysis with
known results from elliptic regularity theory to characterize 
Smirnov functions in terms of Sobolev smoothness (Theorem \ref{equivS2}).
Roughly speaking, Smirnov spaces consist of holomorphic functions
with Lebesgue integrable boundary values with respect to arclength, and as 
such they are basic to solve Dirichlet and Neumann problems for the Laplace 
equation in dimension 2. 
In Section \ref{Smha}, we dwell on this connection to prove well-posedness
of the Dirichlet problem with $W^{1,2}$-data which we could not 
find in the literature (Proposition \ref{Neucor}). 
This well-posedness and the fact that a nonzero 
Smirnov function cannot vanish on a boundary subset of positive measure
are fundamental to the proof of Theorem \ref{cauchy1}
in Section \ref{sec:ur00}.
}

\subsection{Hardy spaces of the disk}
\label{sec:Hardy}
We set $\DD(\xi,\rho)$ and
$\TT(\xi,\rho)$ to designate
the disk and the circle of radius $\rho$,
centered at $\xi$  in the complex plane. When $\xi=0$ we simply write
$\DD_\rho$ and $\TT_\rho$, and if 
$\rho=1$ we omit  subscripts.  
Arclength on $\TT_\rho$ will be denoted
by $m$, irrespective of $\rho$, which should cause no confusion. 
Thus, $dm(\rho e^{i\theta})=\rho d\theta$. Given a function $f$ on $\DD$
and $\rho\in[0,1)$, we write $f_\rho$ to mean the function on $\DD$ given by
$f_\rho(z)=f(\rho z)$.

For $p\in[1,\infty)$, the Hardy space $H^p$ 
consists of functions $f$ which are 
holomorphic in the unit disk and satisfy the growth condition:
\begin{equation} \label{esssuppa}
\left\Vert f\right\Vert_{H^p}= 
\sup_{0<\rho  < 1}
\Bigl(\int_{\T_\rho} \left\vert
f(\xi)\right\vert^p dm(\xi)\Bigr)^{1/p} <+\infty.
\end{equation}
The space $H^\infty$ is comprised of  bounded holomorphic functions endowed 
with
the $sup$ norm. Note that $\|f_\rho \|_{L^p(\T)}$ is non-decreasing with 
$\rho $ by subharmonicity of $|f|^p$, see \cite[Thm 17.6]{rudin},
hence the $\sup$ in \eqref{esssuppa} is really a limit as $\rho\to1^-$.

It is well-known (see \cite[Ch. 2]{duren} or \cite[Ch. 2]{garnett})
that each $f\in H^p$ has a non-tangential limit $f(\xi)$
at $m$-a.e. $\xi\in\T$, which makes for
a definition of $f$ on the unit circle.
The map $f\mapsto f_{|\TT}$ is an isometry from
$H^p$ onto the closed subspace of $L^p(\TT)$ consisting of functions 
whose Fourier coefficients of strictly negative index do vanish.
This allows us to regard $H^p$ both as a space of holomorphic functions on 
$\DD$ and as a space of $L^p$-functions on $\TT$, upon identifying $f$ with 
$f_{|\TT}$.
Every $f\in H^p$ can be represented as 
the Cauchy as well as the Poisson integral of its non-tangential limit:
\[f(z)=\frac{1}{2i\pi}\int_\TT \frac{f(\xi)}{\xi-z}d\xi,\qquad
f(z)=\frac{1}{2\pi}\int_\TT \frac{1-|z|^2}{|\xi-z|^2}f(\xi)\,dm(\xi),\qquad
z\in\DD.
\]
Hereafter, the Poisson integral of a function 
$\psi\in L^1(\TT)$ will be abbreviated as $P[\psi]$. 
If $f\in H^p$, $1\leq p<\infty$, then 
$\|(f_\rho)_{|\TT}- f_{|\TT}\|_{L^p(\T)}\to0$ as $\rho \to1^-$. 
As for the nontangential maximal function,
it holds if $p>1$ and $f\in H^p$ that, for any $\alpha>1$,
\begin{equation}
\label{ntborne}
\|{\mathcal M}_\alpha f\|_{L^p(\T)}\leq C\|f\|_{L^p(\T)}
\end{equation}
where the constant $C$ depends on $\alpha$ and $p$ 
\cite[Ch. II, Thm 3.1]{garnett}.

Clearly $H^p\subset L^p(\D)$, moreover one can see from the
Cauchy formula that if  $f\in H^p$ and $\varepsilon>0$ then the
derivative $f'$ satisfies $\|f'_\rho\|_{L^p(\TT)}\leq C(1-\rho)^{-1-\varepsilon}$ where $C$ depends only on $\varepsilon$ and $p$
\cite[Thm 5.5]{duren}. Thus, using Fubini's theorem to evaluate the 
right hand side of \eqref{inegfracdb}, we deduce that 
$H^p$ embeds in $W^{\theta,p}(\DD)$ for $\theta\in(0,1/p)$ and 
we get by the Sobolev embedding theorem that
\begin{equation}
\label{sum2p}
\|f\|_{L^\lambda(\D)}\leq C\|f\|_{H^p},\qquad p\leq \lambda<2p,
\end{equation}
where $C=C(p,\lambda)$.
When $p=2$ these estimates can be sharpened, for
in this case Green's formula yields that 
$\|(1-|z|^2)^{1/2}\nabla f(z)\|_{L^2(\DD)}\sim
\|f-f(0)\|_{H^2}$
\cite[Ch. VI, Lem. 3.2]{garnett}, hence it follows from 
\eqref{inegfracdb} that $H^2\subset W^{1/2,2}(\DD)$ and subsequently, by the
Sobolev embedding theorem, that $\|f\|_{L^4(\DD)}\leq C\|f\|_{H^2}$.
In fact, since both sides of \eqref{inegfracdb} are equivalent 
quantities when $f$ is harmonic (see discussion after \eqref{inegfracdb}),
$H^2$ is precisely the
space of holomorphic functions in $W^{1/2,2}(\DD)$ with equivalence of 
{ norms. A nonzero} $f\in H^p$ is such that $\log|f_{|\TT}|\in L^1(\TT)$
\cite[Thm 2.2]{duren}, in particular, 
\emph{a nonzero $H^p$-function cannot vanish 
on a subset  of $\TT$ of strictly positive measure}\footnote{More generally, it is a theorem of Privalov that no 
nonzero meromorphic function on $\DD$ has 
nontangential limit zero on a set of strictly positive measure on $\TT$,
see \cite[Sec. 6.1]{Pommerenke}.}.
Conversely,
if $h\in L^p(\TT)$ is non-negative and $\log h\in L^1(\TT)$,
then
\begin{equation}
\label{defouter}
E_{h}(z)=\exp\left\{\frac{1}{2\pi}\int_\TT \frac{\xi+z}{\xi-z}\log h(\xi)\,dm(\xi)
\right\},
\qquad
z\in\DD,
\end{equation}
belongs to $H^p$ and satisfies $|(E_h)_{|\TT}|=h$. 
A function of the form \eqref{defouter}
is called \emph{outer}, and it is characterized among $H^p$ functions by the fact that $\log|E_h|$ (which is harmonic in $\DD$ since $E_h$
has no zeros there) is the
Poisson integral of its nontangential limit. Each nonzero $f\in H^p$ can be 
factored as
$f=J E_{|f|}$ where $J$ is \emph{inner}, meaning that  $J\in H^\infty$ and
$|J_{|\TT}|\equiv1$
\cite[Thm 2.8]{duren} \cite[Ch. II, Cor. 5.7]{garnett}. Conversely, every
product  $J E_h$ where $J$ is inner and $h$ as in \eqref{defouter}  
is a member of $H^p$. 
The multiplicative decomposition 
$f=JE_{|f|}$ is called the inner-outer factorization of $f$.
We shall need that if $f\in H^p$, $g\in H^q$, and $|f_{|\TT}g_{|\TT}|\in 
L^r$ for some $p,q,r\geq1$, then $fg\in H^r$. Indeed,
one has inner-outer factorizations $f=J_1 E_{|f|}$ and $g=J_2 E_{|g|}$,
so that $fg=J_1J_2 E_{|f|} E_{|g|}=J_1J_2 E_{|fg|}$; now,
$J_1J_2$ is inner and $E_{|fg|}\in H^r$ since
$\log|f_{|\TT}g_{|\TT}|=\log|f_{|\TT}|+\log|g_{|\TT}|\in L^1(\TT)$ 
and $|fg|_{|\TT}\in L^r(\TT)$, whence $fg\in H^r$.

Every real-valued
harmonic function $u$ on $\DD$ has a {\it harmonic conjugate},
that is, a real-valued harmonic function $v$
on $\DD$ such that $u+iv$ is holomorphic;
this follows by simple connectedness of $\DD$  from the fact that
$\Delta u=0$ makes $-\partial_{x_2}udx_1+\partial_{x_1}udx_2$ an exact 
differential. 
The conjugate function 
is defined up to an additive constant, and we customarily normalize it so that
$v(0)=0$. 
When $1\leq p\leq\infty$ and real $\psi\in L^p(\TT)$, then $u(z)=P[\psi](z)$
is harmonic on $\DD$ and it is a theorem of Fatou that it has
nontangential limit $\psi$ a.e. on $\TT$. Also, it holds that
$\|u\|_{L^p(\TT_\rho)}\leq \|\psi\|_{L^p(\TT)}$ for $0\leq\rho<1$.
Under the stronger assumption that $1<p<\infty$, 
then $v=P[\widetilde{\psi}]$ where
\begin{equation}
\widetilde{\psi}(e^{i\theta}) := \lim_{\varepsilon\to0}
\frac{1}{2 \, \pi} \, 
\int_{\varepsilon<|\theta-t|<\pi} \frac{\psi(e^{it})}{\tan(\frac{\theta-t}{2})}
\, dm(t)
\label{RHt}
\end{equation}
is called the \emph{conjugate function} of $\psi$. 
It is a theorem of
M. Riesz that the conjugation operator $\psi\mapsto\widetilde{\psi}$
is an isomorphism of $L^p(\T)$ when $1<p<\infty$. 
Thus, we see that if $\psi\in L^p_\RR(\TT)$
and $1<p<\infty$, then there exists $g\in H^p$ (namely $g=u+iv$)
such that $\mbox{Re}\,g=\psi$ on $\T$ \cite[Ch. III]{garnett}. 
Such a $g$ is
unique up to addition  of a pure imaginary constant,
and if we normalize it so that $\text{Im}\,g(0)=0$, then
$\|g\|_{H^p}\le C 
\|\psi\|_{L^p(\T)}$ with  $C=C(p)$.

When $\psi\in L^1(\T)$, the conjugate function $\widetilde{\psi}$ is still
defined pointwise almost everywhere {\it via} \eqref{RHt} but it may
no longer belong to $L^1(\T)$.

For $p\in(1,\infty)$, a non-negative function $\mathfrak{w}\in L^1(\T)$ 
is said to satisfy Muckenhoupt condition $A_p$ if 
\begin{equation}
\label{defAp}
\{\mathfrak{w}\}_{A_p}:=\sup_{I}\Bigl(\frac{1}{m(I)}\int_{I} \mathfrak{w}\,dm\Bigr)
\Bigl(\frac{1}{m(I)}\int_I \mathfrak{w}^{-1/(p-1)} dm\Bigr)^{p-1}<+\infty,
\end{equation}
where the supremum is taken over all arcs $I\subset\T$.
A theorem of Hunt, Muckenhoupt and Wheeden 
\cite[Ch. VI, Thm 6.2]{garnett}\footnote{The proof given there on 
the half-plane easily carries over to the disk.}  
asserts that $\mathfrak{w}$ satisfies condition $A_p$ if and only if there is
$C>0$ independent of $\phi$ for which
\begin{equation}
\label{HMW}
\int_\T |\widetilde{\phi}|^p\,\mathfrak{w}\,dm
\leq C \int_\T |\phi|^p\,\mathfrak{w}\,dm,
\qquad \phi\in L^1(\T),
\end{equation}
and also that \eqref{HMW} is equivalent to 
\begin{equation}
\label{MMW}
\int_\T |M\phi|^p\,\mathfrak{w}\,dm
\leq C_1 \int_\T |\phi|^p\,\mathfrak{w}\,dm
\end{equation}
for $C_1 > 0$ and where $M\phi$ is the Hardy-Littlewood maximal function of $\phi$:
\begin{equation}
\label{defHLM}
M\phi(\xi)=\sup_{I\ni \xi}\frac{1}{m(I)}\int_I|\phi|\,dm,\qquad \xi\in\TT,
\end{equation}
the supremum being taken over all subarcs of $\TT$ that contain $\xi$.
In \eqref{HMW},  the 
assumption $\phi\in L^1(\T)$ is just a means to ensure that $\widetilde{\phi}$
is well defined $m$-a.e. 
and the constants $C$, $C_1$ 
can be chosen to depend only on 
$\{\mathfrak{w}\}_{A_p}$. 

Condition $A_2$ is
fundamental to function theory on Lipschitz (and more generally
chord-arc\footnote{A Jordan domain $\Omega$  is chord-arc
(or Lavrentiev) if
$\Lambda(J(\xi_1,\xi_2))\leq M |\xi_1-\xi_2|$ whenever 
$\xi_1,\xi_2\in\partial\Omega$, where $J(\xi_1,\xi_2)$ is the smaller arc of 
$\partial\Omega$ between $\xi_1$ and $\xi_2$ and $M$ is a constant.}) domains,
as was
first pointed out in the seminal work \cite{Kenig}, see also
\cite{JKAinf,wolff,Zinsmeister}. 
Recall from the Riemann mapping theorem that to
each simply connected domain $\Omega\subset\CC$ there is
a conformal map $\varphi$ from $\DD$ onto $\Omega$,
which is unique if we impose for instance 
$\varphi(0)\in \Omega$ and $\arg\varphi'(0)\in[0,2\pi)$. 
The precise normalization is unimportant in what follows.
\begin{lemma}
\label{conforL}
Let $\Omega\subset\CC$ be a bounded simply connected Lipschitz domain
and $\varphi:\DD\to\Omega$ a conformal map. Then $\varphi$ 
extends homeomorphically from $\overline{\D}$ onto $\overline{\Omega}$
and preserves nontangential regions of approach in that, to every 
$\alpha,\beta>1$, 
there are $\alpha',\beta'>1$ such that:
\begin{equation}
\label{prenta}
R^\Omega_{\alpha}(\varphi(\xi))\subset
\varphi\left(R^\DD_{\alpha'}(\xi)\right)\quad \mbox{\rm and}\quad
\varphi\left(R^\DD_{\beta}(\xi)\right)
\subset R^\Omega_{\beta'}(\varphi(\xi)), \qquad\xi\in\TT.
\end{equation}
The derivative $\varphi'$ as well as its reciprocal $1/\varphi'$
lie in $H^p$ for some $p>1$, and it holds
for any measurable 
$E\subset\partial\Omega$ that $\Lambda(E)=\int_{\varphi^{-1}(E)}|\varphi'|dm$.
Moreover, the weights
$|\varphi_{|\TT}'|$ and $1/|\varphi_{|\TT}'|$ satisfy condition $A_2$.
\end{lemma}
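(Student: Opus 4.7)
The plan is to deduce each claim from classical conformal mapping theory combined with the harmonic-analytic properties of Lipschitz boundaries, with the Muckenhoupt condition being the only step requiring more than the Hardy space material of Section \ref{sec:Hardy}.

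\textbf{Boundary extension and nontangential correspondence.} Since $\Omega$ is a Jordan domain with rectifiable boundary (a consequence of the Lipschitz hypothesis), Carath\'eodory's theorem yields that $\varphi$ extends to a homeomorphism $\overline{\DD}\to\overline{\Omega}$. For \eqref{prenta} I would exploit the uniform interior and exterior cone conditions enjoyed by a bounded Lipschitz domain: the standard Warschawski--Pommerenke distortion estimates, applied locally in the Lipschitz charts of Section \ref{ssec:sob}, show that $\varphi$ sends Stolz angles with vertex $\xi\in\TT$ into truncated cones in $\Omega$ with vertex $\varphi(\xi)$ whose aperture and height depend only on the original Stolz angle and on the Lipschitz constants of $\partial\Omega$; the reverse inclusion follows symmetrically by applying the same reasoning to $\varphi^{-1}$, which is itself a conformal map between two simply connected Lipschitz domains.

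\textbf{Regularity of $\varphi'$ and the change of variable formula.} Since $\partial\Omega$ is rectifiable, the F.~and M.~Riesz theorem yields $\varphi'\in H^1$ and the identity $\Lambda(E)=\int_{\varphi^{-1}(E)}|\varphi'|\,dm$ for every Borel $E\subset\partial\Omega$, a Lipschitz domain being automatically Smirnov. To upgrade the exponent I would use that a bounded Lipschitz domain is chord-arc, so by the Lavrentiev--Pommerenke theorem $\log\varphi'$ lies in $\mathrm{BMO}(\TT)$ and $\varphi'$ is outer; the John--Nirenberg inequality then furnishes $\varepsilon>0$ such that both $|\varphi'|^{1+\varepsilon}$ and $|\varphi'|^{-(1+\varepsilon)}$ are integrable on $\TT$, and since $\varphi'$ and $1/\varphi'$ are outer this forces $\varphi',1/\varphi'\in H^{1+\varepsilon}$.

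\textbf{The $A_2$ condition.} This is the step I expect to be most delicate, and I would reduce it to Dahlberg's theorem on harmonic measure. The pushforward under $\varphi$ of normalized arclength $dm/(2\pi)$ on $\TT$ is harmonic measure $\omega$ on $\partial\Omega$ based at $\varphi(0)$, so the change of variable formula above gives the Poisson--Szeg\H{o} identity $d\omega/d\Lambda=1/(2\pi\,|\varphi'|\circ\varphi^{-1})$. Dahlberg's theorem asserts that on a bounded Lipschitz domain this density lies in the reverse H\"older class $B_2(d\Lambda,\partial\Omega)$; applied to a surface arc $J=\varphi(I)$ for an arc $I\subset\TT$ and transported back to $I$ using $d\Lambda=|\varphi'|\,dm$, the $B_2$ estimate squares and rearranges into
\begin{equation*}
\Bigl(\frac{1}{m(I)}\int_I|\varphi'|\,dm\Bigr)\Bigl(\frac{1}{m(I)}\int_I|\varphi'|^{-1}\,dm\Bigr)\le C^2,
\end{equation*}
which is precisely \eqref{defAp} with $p=2$ and weight $\mathfrak{w}=|\varphi'_{|\TT}|$. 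The self-dual nature of the $A_2$ class (swap $\mathfrak{w}\leftrightarrow\mathfrak{w}^{-1}$ in \eqref{defAp} with $p=2$) simultaneously delivers the condition for $1/|\varphi'_{|\TT}|$, completing the proof.
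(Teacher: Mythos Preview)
Your proof is largely on target, and your route to the $A_2$ condition via Dahlberg's reverse H\"older inequality for the Poisson kernel is a genuinely different and attractive alternative to the paper's approach (which simply cites \cite{wolff}, \cite{lanzani-stein}, \cite{GM}). Your transport computation is correct: the $B_2(d\Lambda)$ estimate for $k=d\omega/d\Lambda$ pulls back precisely to the $A_2$ bound for $|\varphi'|$ on $\TT$. This is worth keeping.

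Two points deserve attention, however.

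\textbf{The nontangential correspondence.} Your invocation of ``Warschawski--Pommerenke distortion estimates, applied locally in the Lipschitz charts'' is too imprecise: the classical Warschawski estimates require Dini-smooth boundaries, and making a local chart argument rigorous in the merely Lipschitz setting is not straightforward. The paper's argument (attributed to Gehring) is cleaner and global: Lipschitz $\Rightarrow$ chord-arc $\Rightarrow$ quasidisk, so $\varphi$ extends to a quasiconformal self-map of $\CC$, hence is quasi-symmetric (equation \eqref{QS}); the inclusions \eqref{prenta} then follow from a one-line estimate, and the reverse inclusion from the quasiconformality of $\varphi^{-1}$.

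\textbf{The $H^p$ upgrade.} Your John--Nirenberg step contains a genuine gap. From $\log|\varphi'|\in\textrm{BMO}$ the John--Nirenberg inequality yields $|\varphi'|^{\pm\varepsilon}\in L^1(\TT)$ for \emph{small} $\varepsilon>0$, not $|\varphi'|^{\pm(1+\varepsilon)}\in L^1(\TT)$. To pass from $|\varphi'|\in L^1$ to $|\varphi'|\in L^{1+\delta}$ you need a reverse H\"older inequality, i.e.\ Gehring's lemma applied to an $A_\infty$ weight; and to get $|\varphi'|^{-1}\in L^{1+\delta}$ you need more than chord-arc, since $A_\infty$ is not self-dual. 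The fix is to reorder your argument: first establish $|\varphi'|\in A_2$ via your Dahlberg computation, then use that $A_2$ is self-dual and that $A_2$ weights satisfy reverse H\"older \cite[Ch.~VI, Cor.~6.10]{garnett} to conclude $|\varphi'|^{\pm1}\in L^{1+\delta}(\TT)$; outerness (Smirnov domain) then gives $\varphi',1/\varphi'\in H^{1+\delta}$. This is exactly the order the paper follows.
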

\begin{proof}
Since $\partial\Omega$ is a Jordan curve,
$\varphi$ extends to a homeomorphism from $\overline{\DD}$ onto
$\overline{\Omega}$ mapping $\T$ to $\partial\Omega$
by Carath\'eodory's theorem
\cite[Thm 2.6]{Pommerenke}. 
To prove \eqref{prenta}, we follow the argument (attributed to F. Gehring)
outlined in \cite[Prop. 1.1]{JKAinf}
for conformal maps from a half-plane onto 
unbounded chord-arc { domains. 
Observe} first that $\Omega$
is {\it a fortiori} chord-arc since it is  Lipschitz,
in particular $\Omega$ is a quasi-disk\footnote{Same definition as
a chord-arc domain except that $\Lambda$ gets  replaced by ``diameter''.}
 \cite[Prop. 7.7]{Pommerenke},
and consequently $\varphi$ 
extends to a quasi-conformal homeomorphism\footnote{An orientation-preserving
 homeomorphism 
$\varphi\in W^{1,2}_{loc}(\CC)$  is quasi-conformal if $\|\bar\partial \varphi/\partial \varphi\|_{L^\infty(\CC)}<1$, see \cite[Def. 2.5.2, Thm 2.5.4]{AIM}. }of $\CC$
\cite[Thm 5.17]{Pommerenke}. Such a map is quasi-symmetric
\cite[Def. 3.2.1, Thm3.5.3]{AIM},
meaning that there is an increasing homeomorphism $\eta$ of
$[0,\infty)$ such that
\begin{equation}
\label{QS}
\left|\frac{\varphi(z_0)-\varphi(z_1)}{\varphi(z_0)-\varphi(z_2)}\right|
\leq \eta\left(\left|\frac{z_0-z_1}{z_0-z_2}\right|\right),\qquad
z_0,z_1,z_2\in\CC.
\end{equation}
Now, fix  $\beta>1$, $z_1\in\TT$ and let $z_0$ range over $R^\DD_\beta(z_1)$.
If we choose $z_2\in\TT$ such that 
$|\varphi(z_0)-\varphi(z_2)|=\textrm{d}(\varphi(z_0), \partial\Omega)$, 
then it follows from \eqref{QS} that
\begin{equation}
\label{inegQS}
\frac{|\varphi(z_0)-\varphi(z_1)|}
{\textrm{d}(\varphi(z_0),\partial\Omega)}
\leq \eta\left(\left|\frac{z_0-z_1}{z_0-z_2}\right|\right)
\leq \eta\left(\frac{|z_0-z_1|}{\textrm{d}(z_0,\TT)}\right)
\leq \eta(\beta),
\end{equation}
hence $\varphi(z_0)\in R^\Omega_{\beta'}(\varphi(z_1))$ with
$\beta'=\eta(\beta)$.
This proves the second inclusion in \eqref{prenta} and the first follows 
in the same manner, replacing $\varphi$ by its inverse which is also
quasi-conformal \cite[Thm 3.7.7]{AIM}.

Next, since $\partial\Omega$ is rectifiable, $\varphi'$ 
lies in $H^1$ and 
$\Lambda(E)=\int_{\varphi^{-1}(E)}|\varphi'|dm$
for every measurable $E\subset\partial\Omega$  \cite[Thm 6.8]{Pommerenke}.
 The fact that $|\varphi'_{|\TT}|$ 
meets $A_2$ is a consequence of \cite[Prop. 15]{wolff}
(which deals more generally with local chord arc graphs),
see also 
\cite[Sec. 2]{lanzani-stein} and the references therein 
or \cite[Ch. VII, Thm 4.2]{GM} for a proof 
when $\Omega$ is star-shaped.
The fact that $|\varphi'_{|\TT}|$ satisfies
condition $A_2$ implies
that it belongs to $L^{1+\delta}(\TT)$ for some $\delta>0$
\cite[Ch. VI, Cor. 6.10]{garnett}, hence 
it holds in fact that $\varphi'\in H^p$ for some $p>1$.
Clearly
$\{|\varphi'_{|\TT}|\}_{A_2}=\{1/|\varphi'_{|\TT}|\}_{A_2}$.
As $\Omega$ is  chord-arc, it is in particular a Smirnov
domain,
meaning that $\varphi'$ is outer \cite[Sec. 7.3, 7.4]{Pommerenke}. 
Hence $1/\varphi'$ is also outer and since 
$1/|\varphi'_{|\TT}|\in L^{1+\delta}(\TT)$ for some $\delta>0$ because
it satisfies $A_2$, we find that in turn
$1/\varphi'\in H^p$ for some $p>1$.
\end{proof}

\subsection{Smirnov classes of a Lipschitz plane domain}
\label{sec:Smirnov}

On an arbitrary simply connected domain $\Omega$ 
(whose boundary contains more than one point), 
there are at least two generalizations 
of the Hardy space $H^p$ of the disk. One which
goes by the name of Hardy space,
but is of no concern to us here,
requires $|f|^p$ to have a harmonic majorant on $\Omega$. 
The other, which is the one we are interested in,
is the so-called Smirnov space, denoted as $\mathcal{S}^p(\Omega)$.
It consists of functions 
$f$, holomorphic in $\Omega$, for which there is a sequence of 
relatively compact Jordan domains $\Delta_n\subset\Omega$
 with rectifiable boundary such that
each compact $K\subset\Omega$ is contained in $\Delta_n$ for $n\geq n(K)$ and
\begin{equation}
\label{systema}
\sup_{n\in\NN}\|f\|_{L^p(\partial\Delta_n)}<\infty.
\end{equation}
By the maximum principle 
$\mathcal{S}^\infty(\Omega)$ consists of bounded holomorphic functions on 
$\Omega$. When $1\leq p<\infty$ it is not immediately clear 
that $\mathcal{S}^p(\Omega)$ is a Banach space, but this is nevertheless true 
and there is in fact a fixed sequence $\Delta_n$
such that (\ref{systema}) holds for all $f\in\mathcal{S}^p(\Omega)$. 
Such a sequence can
be taken to be $\varphi(\DD_{\rho_n})$ where  $\rho_n\to1^-$ and
$\varphi$ is a conformal map from $\DD$ onto $\Omega$
\cite[Thm 10.1]{duren}. Consequently $f$ belongs to $\mathcal{S}^p(\Omega)$
if and only if $(f\circ\varphi)(\varphi')^{1/p}$ belongs to $H^p$, and
$\|(f\circ\varphi)(\varphi')^{1/p}\|_p$ will serve as a norm on 
$\mathcal{S}^p(\Omega)$ \cite[Ch. 10, Sec. 1, Cor. to Thm 10.1]{duren}. 

As soon as $\partial\Omega$ is rectifiable, so that $\varphi'\in H^1$,
the previous characterization together with Lemma \ref{conforL}
and the discussion in Section \ref{sec:Hardy} imply that  
each $f\in\mathcal{S}^p(\Omega)$ has nontangential limits a.e. on
$\partial\Omega$ with respect to arclength, and that
the boundary function
thus defined lies in $L^p(\partial\Omega)$. Moreover, \emph{this boundary function
cannot vanish on a set of positive
arclength unless $f\equiv0$}, and its norm in $L^p(\partial\Omega)$ coincides
with $\|f\|_{\mathcal{S}^p(\Omega)}$, 
thereby identifying $\mathcal{S}^p(\Omega)$ with a closed 
subspace of $L^p(\partial\Omega)$. 
Again $f$ is recovered from its boundary 
function by a Cauchy integral \cite[Thm 10.4]{duren}, but
the (analog of the) Poisson representation may now fail.

Our interest in Smirnov spaces is here limited to
$\mathcal{S}^2(\Omega)$ for $\Omega$ 
a bounded simply connected Lipschitz domain.  Theorem \ref{equivS2}
below gives two alternative descriptions of this space.
{ We mention that the analog of point $(i)$ 
for unbounded chord-arc domains 
is contained in \cite[Thm 2.2]{JKAinf}.}
First, we need a lemma:
\begin{lemma}
\label{S2H1}
Let $\Omega\subset\CC$ be a bounded simply connected Lipschitz domain,
and $\varphi$ map $\DD$ conformally onto $\Omega$.
If $f\in\mathcal{S}^2(\Omega)$, then $f\circ\varphi\in H^1$.
\end{lemma}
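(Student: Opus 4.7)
My plan is to use the characterization recorded right after \eqref{systema}: membership $f \in \mathcal{S}^2(\Omega)$ is equivalent to
\[
G := (f \circ \varphi)\,(\varphi')^{1/2} \in H^2.
\]
Since $\varphi'$ is outer (as noted at the end of the proof of Lemma \ref{conforL}), it has no zeros in the simply connected disk $\DD$, so a single-valued holomorphic square root $(\varphi')^{1/2}$ is well defined via any branch of $\log\varphi'$. Writing $H$ for its reciprocal $(\varphi')^{-1/2}$, defined from the \emph{same} branch, one gets the identity $(\varphi')^{1/2}\cdot H\equiv 1$ on $\DD$ and hence the factorization $f\circ\varphi = G\cdot H$ with both factors holomorphic.

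Next I would show that $H$ lies in $H^{2p}$ for some $p>1$. By Lemma \ref{conforL}, the reciprocal $1/\varphi'$ belongs to $H^p$ for some $p>1$, and it is itself outer; its boundary modulus $1/|\varphi'_{|\TT}|$ thus lies in $L^p(\TT)$, whence $1/|\varphi'_{|\TT}|^{1/2}\in L^{2p}(\TT)$. It follows that $H$ coincides with the outer function $E_{1/|\varphi'|^{1/2}}$ of \eqref{defouter} (up to a unimodular constant fixed by the choice of branch), and consequently $H\in H^{2p}\subset H^2$.

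The proof is then concluded by a Cauchy--Schwarz estimate sliced over concentric circles: for every $\rho\in(0,1)$,
\[
\|(f\circ\varphi)_\rho\|_{L^1(\TT)} \;=\; \|G_\rho H_\rho\|_{L^1(\TT)} \;\leq\; \|G_\rho\|_{L^2(\TT)}\,\|H_\rho\|_{L^2(\TT)} \;\leq\; \|G\|_{H^2}\,\|H\|_{H^2},
\]
where the last bound uses that $\rho\mapsto\|g_\rho\|_{L^2(\TT)}$ is non-decreasing for $g\in H^2$ (by subharmonicity of $|g|^2$, as recalled just after \eqref{esssuppa}). Taking the supremum over $\rho$ gives $f\circ\varphi\in H^1$, which is the claim.

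The only mildly delicate point is the bookkeeping of branches, ensuring that $(\varphi')^{1/2}$ and $H=(\varphi')^{-1/2}$ are defined from the same logarithm so that $G\cdot H$ is literally equal to $f\circ\varphi$ on $\DD$; once this is settled, the argument reduces to the single Cauchy--Schwarz line above, so no serious obstacle is anticipated.
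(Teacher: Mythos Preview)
Your proof is correct and follows essentially the same route as the paper: factor $f\circ\varphi = G\cdot H$ with $G=(f\circ\varphi)(\varphi')^{1/2}\in H^2$ and $H=(\varphi')^{-1/2}\in H^2$, then apply Cauchy--Schwarz on circles $\TT_\rho$. The paper streamlines the middle step by observing directly that $\int_{\TT_\rho}|H|^2\,dm=\int_{\TT_\rho}|1/\varphi'|\,dm\le\|1/\varphi'\|_{H^1}$ (using only that $1/\varphi'\in H^1$ from Lemma~\ref{conforL}), so your detour through outer functions and the stronger membership $H\in H^{2p}$ is not needed, but it is not wrong either.
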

\begin{proof}
Set for simplicity  $F=f\circ\varphi$.
Since $f\in \mathcal{S}^2(\Omega)$, we have that $F(\varphi')^{1/2}\in H^2$,
and we know from Lemma \ref{conforL} that $1/\varphi'$ lies
in $H^1$. Therefore, by the Schwarz inequality and the monotonicity
of $\rho\mapsto\|g_\rho \|_{L^p(\T)}$  for holomorphic $g$,
we get that
\[\left(\int_{\TT_\rho}|F|dm\right)^2\leq
\int_{\TT_\rho}|F|^2|\varphi'|dm
\int_{\TT_\rho}|1/\varphi'|dm\leq
\left\|F(\varphi')^{1/2}\right\|^2_{H^2}\|1/\varphi'\|_{H^1}.
\] 
\end{proof}

\begin{theorem}
\label{equivS2}
Let $\Omega\subset\CC$ be a bounded simply connected Lipschitz domain.
\begin{itemize}
\item[(i)] For each $\alpha>1$, the space $\mathcal{S}^2(\Omega)$ 
coincides with 
holomorphic functions $f$ in $\Omega$ such that 
$\mathcal{M}_\alpha f\in L^2(\partial\Omega)$ and
$f\mapsto\|\mathcal{M}_\alpha f\|_{L^2(\partial\Omega)}$ is an 
equivalent norm on
$\mathcal{S}^2(\Omega)$.
\item[(ii)] $\mathcal{S}^2(\Omega)$ is the closed subspace of 
$W^{1/2,2}(\Omega)$ consisting of holomorphic functions,
with equivalence of norms.
\end{itemize}
\end{theorem}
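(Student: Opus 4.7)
The plan is to transfer both statements to the unit disk via the conformal map $\varphi:\DD\to\Omega$ from Lemma \ref{conforL}. Set $F=f\circ\varphi$ and $G=F(\varphi')^{1/2}$, with the holomorphic square root defined using that $\varphi'$ is outer and nonvanishing; by definition $f\in\mathcal{S}^2(\Omega)$ iff $G\in H^2(\DD)$, with equal norms. The cone-preservation inclusions \eqref{prenta}, the area formula $d\Lambda=|\varphi'_{|\TT}|\,dm$, and the $A_2$-condition satisfied by both $|\varphi'_{|\TT}|$ and its reciprocal constitute the basic dictionary.

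For the forward direction of (i), Lemma \ref{S2H1} yields $F\in H^1(\DD)$, whence $\mathcal{M}^\DD_\beta F\leq C\, M(F_{|\TT})$ pointwise, $M$ being the Hardy--Littlewood maximal function. Together with the cone-preservation inequality $\mathcal{M}^\Omega_\alpha f\circ\varphi\leq\mathcal{M}^\DD_\beta F$ (for suitable $\beta=\beta(\alpha)$) and the Muckenhoupt inequality \eqref{MMW} for the $A_2$ weight $|\varphi'_{|\TT}|$, this gives
\[
\int_{\partial\Omega}(\mathcal{M}^\Omega_\alpha f)^2\,d\Lambda=\int_\TT(\mathcal{M}^\Omega_\alpha f\circ\varphi)^2\,|\varphi'_{|\TT}|\,dm\leq C\int_\TT|F_{|\TT}|^2|\varphi'_{|\TT}|\,dm=C\|f\|_{\mathcal{S}^2(\Omega)}^2.
\]
For the reverse direction, cone preservation also delivers $\mathcal{M}^\DD_{\beta'}F\leq\mathcal{M}^\Omega_\alpha f\circ\varphi$, finite $dm$-a.e.; Plessner's theorem then produces a nontangential limit $F^*$ a.e. on $\TT$, and combined with the boundary values of the outer factor $(\varphi')^{1/2}$ one obtains a nontangential limit $G^*=F^*(\varphi'_{|\TT})^{1/2}$ of $G$ with $\int_\TT|G^*|^2\,dm\leq\int_{\partial\Omega}(\mathcal{M}^\Omega_\alpha f)^2\,d\Lambda<\infty$. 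Combining the pointwise bound $\mathcal{M}^\DD_\gamma G\leq(\mathcal{M}^\Omega_\alpha f\circ\varphi)\cdot(\mathcal{M}^\DD_\gamma\varphi')^{1/2}$ with $\varphi'\in H^{1+\delta}$ and the standard nontangential-maximal-function characterization of $H^2$ will then give $G\in H^2(\DD)$, hence $f\in\mathcal{S}^2(\Omega)$.

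For part (ii), the plan is to combine (i) with the harmonic characterization of $W^{1/2,2}(\Omega)$: the inequality \eqref{inegfracdb} and its converse from \cite{JKinhom} identify the $W^{1/2,2}(\Omega)$-norm of a harmonic function $u$ with $\|u\|_{L^2(\Omega)}+\|\textrm{d}(\cdot,\partial\Omega)^{1/2}\nabla u\|_{L^2(\Omega)}$, which in turn is equivalent, via classical Lipschitz-domain harmonic analysis \emph{\`a la} Dahlberg, to $\|u\|_{L^2(\Omega)}+\|\mathcal{M}_\alpha u\|_{L^2(\partial\Omega)}$. Applied to $\textrm{Re}\,f$ and $\textrm{Im}\,f$ and combined with (i), this yields the chain
\[
f\in\mathcal{S}^2(\Omega)\ \Longleftrightarrow\ \mathcal{M}^\Omega_\alpha f\in L^2(\partial\Omega)\ \Longleftrightarrow\ f\in W^{1/2,2}(\Omega)
\]
among holomorphic $f$, with equivalent norms, which is precisely (ii).

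The main obstacle will be the reverse implication of (i): the pointwise estimate $\mathcal{M}^\DD_\gamma\varphi'\leq C\,M|\varphi'_{|\TT}|$ (from $\varphi'\in H^1$) reduces matters to controlling $\int_\TT(\mathcal{M}^\Omega_\alpha f\circ\varphi)^2\,M|\varphi'_{|\TT}|\,dm$ by $\int_\TT(\mathcal{M}^\Omega_\alpha f\circ\varphi)^2\,|\varphi'_{|\TT}|\,dm$, and $M|\varphi'_{|\TT}|$ is not pointwise dominated by $|\varphi'_{|\TT}|$ for a generic $A_2$ weight. The resolution should come either from the self-improvement $|\varphi'_{|\TT}|\in A_{2-\varepsilon}$ combined with a weighted H\"older argument, or alternatively from realizing $f$ as the Cauchy integral of $f^*\in L^2(\partial\Omega)$ by the Coifman--McIntosh--Meyer $L^2$-boundedness of the Cauchy singular integral on Lipschitz curves together with Privalov's uniqueness theorem.
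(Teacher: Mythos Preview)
Your forward direction of (i) and your plan for (ii) are essentially the paper's arguments. The difference is in the reverse direction of (i), where you create an obstacle for yourself that the paper sidesteps.

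First a minor point: the cone-preservation inclusions \eqref{prenta} do \emph{not} give $\mathcal{M}^\DD_{\beta'}F\le \mathcal{M}^\Omega_\alpha f\circ\varphi$ for the \emph{given} $\alpha$; they only give it for some $\delta=\delta(\beta')$ which may be larger than $\alpha$. The paper fixes this first, using Dahlberg's equivalence \eqref{estDahl} applied to $f-f(\varphi(0))$ to prove that $\|\mathcal{M}_\delta f\|_{L^2(\partial\Omega)}\sim\|\mathcal{M}_\alpha f\|_{L^2(\partial\Omega)}$ for all $\alpha,\delta>1$ (this is \eqref{CompNT}). You should do the same before transferring to the disk.

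More importantly, the paper never tries to bound $\mathcal{M}^\DD_\gamma G$ pointwise, so the problematic factor $M|\varphi'_{|\TT}|$ never appears. Instead it argues as follows. For each fixed $\rho<1$ one has the trivial pointwise bound $|F(\rho e^{i\theta})|\le \mathcal{M}^\DD_2 F(e^{i\theta})$, hence
\[
\int_\TT |F_\rho|^2\,|\varphi'_{|\TT}|\,dm\ \le\ \int_\TT(\mathcal{M}^\DD_2 F)^2\,|\varphi'_{|\TT}|\,dm\ \le\ \int_\TT(\mathcal{M}^\Omega_\delta f\circ\varphi)^2\,|\varphi'_{|\TT}|\,dm\ =\ \|\mathcal{M}^\Omega_\delta f\|_{L^2(\partial\Omega)}^2,
\]
where $\delta=\eta(2)$ from the quasisymmetry. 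This is a uniform-in-$\rho$ bound on $\|F_\rho(\varphi')^{1/2}\|_{L^2(\TT)}$; a weak $H^2$-limit along $\rho_k\to1^-$ combined with locally uniform convergence of $F_{\rho_k}\to F$ then gives $G=F(\varphi')^{1/2}\in H^2$ with norm controlled by $\|\mathcal{M}^\Omega_\alpha f\|_{L^2(\partial\Omega)}$. No Plessner, no self-improvement of $A_2$, no Cauchy singular integral is needed.

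Your proposed resolutions are not wrong in spirit---the Cauchy-integral route via Coifman--McIntosh--Meyer together with Privalov would indeed close the argument---but they are heavier than necessary; and the ``$A_{2-\varepsilon}$ plus weighted H\"older'' suggestion is vague as stated, since $M|\varphi'_{|\TT}|\le C|\varphi'_{|\TT}|$ is the $A_1$ condition, which does not follow from $A_2$ self-improvement. The paper's trick of bounding $\|G_\rho\|_{H^2}$ rather than $\mathcal{M}_\gamma G$ is the clean way through.
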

\begin{proof}
Let $\varphi$ 
map $\DD$ conformally onto $\Omega${, put $\psi$ for the inverse map,} and
pick $\alpha>1$.  By \eqref{prenta}, there is $\beta>1$
such that $\mathcal{M}_\alpha f\leq  
(\mathcal{M}_\beta F)\circ{ \psi}$ for $f\in \mathcal{S}^2(\Omega)$
and $F=f\circ\varphi$. From  Lemma \ref{S2H1} we get that $F\in H^1$,
hence it is the Poisson integral of $F_{|\TT}$.
It is  known, however, that 
$\mathcal{M}_\beta F\leq C M F_{|\TT}$ pointwise on
$\TT$ for some constant $C$ depending only on $\beta$ 
\cite[Ch. I, Thm 4.2]{garnett}\footnote{The proof given there
on the half-plane carries over immediately to the disk.}.
Consequently,
\begin{equation}
\label{majMF1}
\begin{array}{rl}
\int_{\partial\Omega} \left(\mathcal{M}_\alpha f\right)^2d\Lambda&\leq
\int_{\partial\Omega} \left(\mathcal{M}_\beta F\circ{ \psi}\right)^2
d\Lambda\\
&=
\int_{\TT} \left(\mathcal{M}_\beta F\right)^2|\varphi'|dm
\leq
C^2\int_\TT (MF)^2|\varphi'|dm,
\end{array}
\end{equation}
where the change of variable is justified by Lemma \ref{conforL}.
Now, as $|\varphi'|$ satisfies condition $A_2$,
we get in view of \eqref{MMW} that
\begin{equation}
\label{majMF2}
\int_\TT (MF)^2|\varphi'|\,dm\leq C_1^2 \int_\TT |F|^2|\varphi'|\,dm
=C_1^2\|f\|^2_{\mathcal{S}^2(\Omega)}
\end{equation}
for some $C_1$ depending only on $\{|\varphi'|\}_{A_2}$.
From \eqref{majMF1} and \eqref{majMF2}, it follows that
\begin{equation}
\label{MaxSmirn}
\|\mathcal{M}_\alpha f\|_{L^2(\partial\Omega)}\leq C_2\|f\|_{\mathcal{S}^2(\Omega)}
\end{equation}
with $C_2=C_2(\Omega, \alpha)$.
Conversely,
assume  that $f$ is holomorphic in $\Omega$ with 
$\|\mathcal{M}_\alpha f\|_{L^2(\partial\Omega)}<\infty$. 
{ Whenever $\delta\in(1,\infty)$ and $z_0\in\Omega$,
it is a famous estimate for harmonic functions on 
Lipschitz domains (in any dimension)
that
\begin{equation}
\label{estDahl}
\|\mathcal{M}_\delta (f-f(z_0))\|_{L^2(\partial\Omega)}\sim
\|\textrm{d}(.,\partial\Omega)^{1/2}\nabla f\|_{L^2(\Omega)}
\end{equation}
where  the constants depend only on $\Omega$, $\delta$ and $z_0$
\cite[Thm 1, Cor. 1]{DahlbergNT}. Assume first that $f(\varphi(0))=0$,
in which case it follows from 
\eqref{estDahl} that
\begin{equation}
\label{CompNT}
\|\mathcal{M}_\delta f\|_{L^2(\partial\Omega)}\leq C
\|\mathcal{M}_\alpha f\|_{L^2(\partial\Omega)}<\infty,
\end{equation}
where 
$C=C(\alpha,\delta,\Omega)$. Pick $\delta=\eta(2)$, where $\eta$ is as in
\eqref{QS}; note that indeed $\eta(2)>1$,  since $\eta$ is strictly 
increasing and $\eta(1)\geq1$.
Now, the argument in \eqref{inegQS} can be reversed (set $\beta=2$ there)
so that,
if $z_1\in\TT$, then $R_\delta^{\Omega}(\varphi(z_1))\supset 
\varphi(R^\DD_2(z_1))$. Hence
$\mathcal{M}_{2}F\leq(\mathcal{M}_\delta f)\circ\varphi$,
$F=f\circ\varphi$, and for $\rho\in[0,1)$ we get from
the obvious inequality
$|F(\rho e^{i\theta})|\leq\mathcal{M}_2 F(e^{i\theta})$ that}
\begin{equation}
\nonumber
\int_{{ \TT}} |F_{ \rho}|^2|\varphi'|\,dm
\leq\int_\TT
|\mathcal{M}_{{ 2}}F|^2|\varphi'|\,dm
\leq
\int_\TT|(\mathcal{M}_{{ \delta}}f) \circ\varphi|^2|\varphi'|\,dm
=\left\|\mathcal{M}_{{ \delta}}f\right\|^2_{L^2(\partial\Omega)}\, .
\end{equation}
{ In view of \eqref{CompNT}, the previous inequality shows
that $\|F_\rho(\varphi')^{1/2}\|_{H^2}$
is bounded independently of $\rho$, so there is a sequence 
$\rho_k\to1^-$ such that  $F_{\rho_k}(\varphi')^{1/2}$ converges weakly in
$H^2$ to some function $G$. Since $F_{\rho_k}(z)=F(\rho_kz)$ 
converges to $F(z)$ locally 
uniformly in $\DD$, passing to the weak limit in the Cauchy formula
yields $G=F(\varphi')^{1/2}$. As the norm of the weak limit cannot exceed the
$\liminf$ of the norms, we deduce on using \eqref{CompNT}  that
\begin{equation}
\label{inegunw}
\|f\|_{\mathcal{S}^2(\Omega)}=
\|F(\varphi')^{1/2}\|_{H^2}\leq
\|\mathcal{M}_\delta f\|_{L^2(\partial\Omega)}\leq
C\|\mathcal{M}_\alpha f\|_{L^2(\partial\Omega)}. 
\end{equation}
Finally, if $f(\varphi(0))\neq0$, we apply \eqref{inegunw} 
to $\psi f$ which has the same $\mathcal{S}^2(\Omega)$-norm as $f$ and
does vanish at $\varphi(0)$ (that
$\|\psi f\|_{\mathcal{S}^2(\Omega)}=\|f\|_{\mathcal{S}^2(\Omega)}$ is clear
from the relation  $(\psi f)\circ\varphi(z)=z f(\varphi(z))$).
Since $\mathcal{M}_\alpha \psi f\leq \mathcal{M}_\alpha f$ because 
$|\psi|\leq1$, we get that
$\|f\|_{\mathcal{S}^2(\Omega)}\leq
C\|\mathcal{M}_\alpha f\|_{L^2(\partial\Omega)}$ where $C=C(\alpha,\Omega)$,}
thereby proving $(i)$.

As for
$(ii)$,
since holomorphic functions are harmonic, we know 
from \cite[Thm 4.1]{JKinhom} that 
\begin{equation}
\label{JKeq}
\|f\|_{L^2(\DD)}+\|\textrm{d}(.,\partial\Omega)^{1/2}\nabla f\|_{L^2(\Omega)}\sim \|f\|_{W^{1/2,2}(\Omega)},\qquad f\in\mathcal{S}^2(\Omega),
\end{equation}
where the constants depend only on $\Omega$.
{{Pick $z_0 \in \Omega$}}. As $f(z_0)$ is the mean of $f$ over some disk $\DD_{z_0,\rho_0}\subset\Omega$,
we get that
$|f(z_0)|\leq C_3\|f\|_{L^2(\Omega)}$ where $C_3=C_3(z_0,\Omega)$.
From this, together with \eqref{JKeq} and \eqref{estDahl}, it follows that
\begin{align}
\label{S2W1/2}
\|\mathcal{M}_\alpha f\|_{L^2(\partial\Omega)}&\leq
|f(z_0)|+\|\mathcal{M}_\alpha (f-f(z_0))\|_{L^2(\partial\Omega)}\\
\nonumber
&\leq C_4\bigl(\|f\|_{L^2(\DD)}+\|\textrm{d}(.,\partial\Omega)^{1/2}\nabla f\|_{L^2(\Omega)}\bigr)\sim \|f\|_{W^{1/2,2}(\Omega)}.
\end{align}
Conversely, the Schwarz inequality implies that
\[\|f\|_{L^2(\Omega)}^2=\int_\DD |(f\circ\varphi)|^2|\varphi'|^2dm_2
\leq \|(f\circ\varphi)(\varphi')^{1/2}\|_{L^4(\Omega)}^2
\|\varphi'\|_{L^2(\Omega)}^2,
\]
{ and since $H^2$ embeds in $L^4(\DD)$ (see discussion after \eqref{sum2p})
while $(\varphi')^{1/2}\in H^2$ by Lemma \ref{conforL},
we get that}
$\|f\|_{L^2(\Omega)}\leq C_5\|f\|_{\mathcal{S}^2(\Omega)}$ where
$C_5=C_5(\Omega,\varphi)$.
From this together with \eqref{JKeq}, \eqref{estDahl}, and the inequality 
$|f(z_0)|\leq C_3\|f\|_{L^2(\Omega)}$ already mentioned, we obtain:
\begin{align}
\label{W1/2S2}
\|f\|_{W^{1{ /}2,2}(\Omega)}&\sim 
\|f\|_{L^2(\Omega)}+\|\textrm{d}(.,\partial\Omega)^{1/2}\nabla f\|_{L^2(\Omega)}\\
\nonumber
&\leq C_6\bigl(\|f\|_{\mathcal{S}^2(\Omega)}+\|\mathcal{M}_\alpha f\|_{L^2(\partial\Omega)}+|f(z_0)|\bigr)\\
\nonumber
&\leq C_7\bigl(\|f\|_{\mathcal{S}^2(\Omega)}+\|\mathcal{M}_\alpha f\|_{L^2(\partial\Omega)}\bigr).
\end{align}
Now, point $(ii)$ follows from \eqref{S2W1/2}, \eqref{W1/2S2} and point
$(i)$.
\end{proof}

\subsection{Smirnov spaces and { Dirichlet problems for the Laplacian}}
\label{Smha}

{ Let $\mathfrak{t}\in L^\infty(\partial\Omega)$ be the tangent vector field to
$\partial\Omega$ written in complex form: 
$\mathfrak{t}=\tau_{x_1}+i\tau_{x_2}$  $\Lambda$-a.e. in $\partial\Omega$.} 
\begin{proposition}
\label{Hdtau}
Let $\Omega\subset\CC$ be a bounded simply connected Lipschitz domain
and $H\in W_\RR^{3/2,2}(\Omega)$ be a harmonic function. Then 
$\partial H\in\mathcal{S}^2(\Omega)$ and
\begin{equation}
\label{exptd}
\partial_\tau H=2\textrm{Re}\left(\partial H \,\mathfrak{t}\right) \, .
\end{equation}
In particular, 
$\tr_{\partial\Omega} H\in W_\RR^{1,2}(\partial\Omega)$.
\end{proposition}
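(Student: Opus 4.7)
Since $H\in W^{3/2,2}_\RR(\Omega)$, each partial $\partial_{x_j}H$ lies in $W^{1/2,2}_\RR(\Omega)$ and hence $\partial H=\tfrac12(\partial_{x_1}H-i\partial_{x_2}H)\in W^{1/2,2}(\Omega)$. Harmonicity of $H$ together with the identity $\bar\partial\partial=\Delta/4$ yields $\bar\partial(\partial H)=0$, so $\partial H$ is holomorphic in $\Omega$; Theorem \ref{equivS2}(ii) then gives the first assertion $\partial H\in\mathcal{S}^2(\Omega)$.

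For the tangential derivative formula I would exploit simple connectedness to introduce the harmonic conjugate $G$ of $H$, normalized by $G(z_0)=0$ at some $z_0\in\Omega$. The Cauchy--Riemann equations give $\nabla G=(-\partial_{x_2}H,\partial_{x_1}H)\in W^{1/2,2}(\Omega)^2$, so the Poincar\'e inequality \eqref{estfonc} places $G\in W^{3/2,2}_\RR(\Omega)$. Then $F:=H+iG\in W^{3/2,2}(\Omega)$ is holomorphic with $F'=2\partial H$, and Theorem \ref{equivS2}(ii) puts both $F$ and $F'$ in $\mathcal{S}^2(\Omega)$.

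I would then transport the problem to the disk through a conformal map $\varphi:\D\to\Omega$. Set $\hat F:=F\circ\varphi$, so that $\hat F\in H^1$ by Lemma \ref{S2H1}. Writing
\[
\hat F'=\bigl[(F'\circ\varphi)(\varphi')^{1/2}\bigr]\cdot(\varphi')^{1/2},
\]
the first factor is in $H^2$ by the very definition of $\mathcal{S}^2(\Omega)$, while Lemma \ref{conforL} yields $\varphi'\in H^p$ for some $p>1$, so the second factor is in $H^{2p}$; the Hardy-space product rule recalled in Section \ref{sec:Hardy} then places $\hat F'$ in $H^r$ for some $r>1$. Applying the fundamental theorem of calculus on $\T_\rho$ for $\rho\in(0,1)$ gives
\[
\hat F(\rho e^{i\theta_2})-\hat F(\rho e^{i\theta_1})=\int_{\theta_1}^{\theta_2}i\rho e^{it}\,\hat F'(\rho e^{it})\,dt,
\]
and letting $\rho\to 1^-$ the left-hand side converges pointwise a.e.\ (nontangential limits of $\hat F\in H^1$) while the right-hand side converges by the $L^r$-convergence $\hat F'_\rho\to\hat F'_{|\T}$ for $\hat F'\in H^r$. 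Hence $\hat F_{|\T}$ is absolutely continuous with derivative $ie^{i\theta}\hat F'(e^{i\theta})$.

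Transporting this identity back to $\partial\Omega$ via arclength parametrization and using that the complex unit tangent satisfies $\mathfrak{t}(\varphi(e^{i\theta}))=ie^{i\theta}\varphi'(e^{i\theta})/|\varphi'(e^{i\theta})|$, the chain rule yields $\partial_\tau F=F'\mathfrak{t}$ $\Lambda$-a.e.\ on $\partial\Omega$, an element of $L^2(\partial\Omega)$ since $F'\in\mathcal{S}^2(\Omega)$. Taking real parts produces the desired identity $\partial_\tau H=\textrm{Re}(F'\mathfrak{t})=2\,\textrm{Re}(\partial H\,\mathfrak{t})$, and the final claim $\tr_{\partial\Omega}H\in W^{1,2}_\RR(\partial\Omega)$ then follows from the characterization of this space via tangential derivatives recalled in Section \ref{sec:pc}. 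The main obstacle is precisely the passage to the boundary in the disk: one must secure $\hat F'\in H^r$ for some $r>1$ to turn the calculus identity on $\T_\rho$ into absolute continuity on $\T$, and this rests crucially on the nontrivial fact that $\varphi'\in H^p$ for some $p>1$, itself a consequence of the $A_2$ property of $|\varphi'_{|\T}|$ established in Lemma \ref{conforL}.
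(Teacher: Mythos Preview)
Your proof is correct and follows essentially the same route as the paper: pull back to the disk via a conformal map $\varphi$, show that the relevant derivative lies in a Hardy space so that the fundamental theorem of calculus on $\T_\rho$ passes to the limit $\rho\to1^-$, and then change variables back to $\partial\Omega$ using $\mathfrak{t}(\varphi(e^{i\theta}))=ie^{i\theta}\varphi'(e^{i\theta})/|\varphi'(e^{i\theta})|$ and $d\Lambda=|\varphi'(e^{i\theta})|\,d\theta$.

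Two minor points of comparison. First, your detour through the harmonic conjugate $G$ and the holomorphic $F=H+iG$ is harmless but unnecessary: the paper works directly with the real $u:=H\circ\varphi$ and the differential identity $du=2\,\mathrm{Re}\bigl((\partial H\circ\varphi)\,\varphi'\,dz\bigr)$, which already isolates the quantity $(\partial H\circ\varphi)\varphi'$ whose Hardy-class membership is the key. Second, your ``main obstacle'' is overstated: you secure $\hat F'\in H^r$ for some $r>1$ by invoking $\varphi'\in H^p$ with $p>1$ (hence the $A_2$ property), but the paper observes that $H^1$ membership suffices for $L^1$-convergence of the integrand, and this follows simply from writing $(\partial H\circ\varphi)\varphi'$ as the product of $(\partial H\circ\varphi)(\varphi')^{1/2}\in H^2$ and $(\varphi')^{1/2}\in H^2$, the latter needing only $\varphi'\in H^1$ (rectifiability), not $A_2$. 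On the left-hand side, the paper uses continuity of $H$ on $\overline\Omega$ (Sobolev embedding of $W^{3/2,2}$) rather than nontangential limits; both work.
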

\begin{proof}
Since $H$ is harmonic, $\partial H$ is holomorphic,
and  $\partial H\in W^{1/2,2}(\Omega)$ because
$H\in W_\RR^{3/2,2}(\Omega)$. Thus, 
$\partial H\in\mathcal{S}^2(\Omega)$ by Theorem \ref{equivS2} $(ii)$.
Also, by the Sobolev embedding theorem, $H$ is continuous on 
$\overline{\Omega}$. Let $\varphi$ map $\DD$ conformally onto $\Omega$.
Lemma \ref{conforL} implies that $u:=H\circ\varphi$ is harmonic on $\DD$ 
and continuous on $\overline{\DD}$. Moreover, the complex chain rule
\cite[Ch. 1, Sec. C]{ahlfors} gives us, since $\bar\partial \varphi=0$,
that
\begin{align}
\nonumber
d u&=(\partial H\circ\varphi)\,\varphi'\,dz+
(\bar\partial H\circ\varphi)\,\overline{\varphi'}\,d\bar z\\
\label{diffcomp}
&=2\text{\rm Re}\Bigl((\partial H\circ\varphi)\,\varphi'\,dz\Bigr),
\end{align}
where we used that $\bar \partial H=\overline{\partial H}$. Now,
to say that 
$\partial H\in\mathcal{S}^2(\Omega)$ is equivalent to say that
$(\partial H\circ\varphi)(\varphi')^{1/2}\in H^2$, and 
Lemma \ref{conforL} implies that $(\varphi')^{1/2}\in H^2$, hence
$F:=(\partial H\circ\varphi)\varphi'\in H^1$. In particular, $F_\rho$
converges to $F_{|\TT}$ in $L^1(\TT)$ as $\rho\to1^-$ and therefore,
by integration, we get from \eqref{diffcomp} upon setting
$\varphi(e^{i\theta_j})=\zeta_j\in\partial\Omega$, $j=1,2$, that
\begin{align}
\nonumber
H(\zeta_1)-H(\zeta_2)&=\lim_{\rho\to1^-}(u(\rho e^{i\theta_1})-
u(\rho e^{i\theta_2}))=\lim_{\rho\to1^-}2
\int_{\theta_1}^{\theta_2}\textrm{Re}\left(F(\rho e^{i\theta})ie^{i\theta}
\right)
\rho d\theta\\
\label{intbT}
&=2\int_{\theta_1}^{\theta_2}\textrm{Re}\left((\partial H\circ\varphi)(e^{i\theta})
\varphi'(e^{i\theta})ie^{i\theta}\right)
d\theta.
\end{align}
Since $\varphi'(e^{i\theta})ie^{i\theta}/|\varphi'(e^{i\theta})|=\mathfrak{t}(\varphi(e^{i\theta}))$ and $d\Lambda=|\varphi'(e^{i\theta})|d\theta$ by Lemma \ref{conforL}, we may rewrite \eqref{intbT} as
\[H(\zeta_1)-H(\zeta_2)=2\int_{[\zeta_1,\zeta_2]}\textrm{Re}\left(\partial H
\mathfrak{t}\right)d\Lambda,\]
where $[\zeta_1,\zeta_2]$ is the oriented arc from $\zeta_1$ to $\zeta_2$ on 
$\partial\Omega$.
This proves \eqref{exptd}.
\end{proof}
\begin{remark}
That $\textrm{tr}_{\partial\Omega} H$ belongs to
$W^{1,2}_\RR(\partial\Omega)$ in
{ Proposition \ref{Hdtau}} depends on the fact that $H$ is harmonic, and
is \emph{not}
a general property of $W^{3/2,2}(\Omega)$-functions, see the discussion 
before \cite[Prop. 3.2]{JKinhom} 
for a counterexample credited to G. David.
\end{remark}
In view of Theorem \ref{equivS2}, the next proposition 
stands analog in the planar case to a well-known result 
on the Dirichlet problem obtained in \cite[Thm 5.1]{Verchota}
for $n\geq3$. The proof we give here in the planar case is quite
different, and uses conformal mapping
and the M. Riesz theorem as global tools\footnote{The
restriction to $n\geq3$ in \cite{Verchota} may be due to the fact that 
it dwells on the method of layer potentials, { where}
the discrepancy between
Riesz and logarithmic potentials makes it cumbersome to treat both
in a single stroke.}.

\begin{proposition}
\label{Neucor}
Let $\Omega\subset\CC$ be a bounded simply connected Lipschitz domain,
and $\psi\in L^2_\RR(\partial\Omega)$ be such that 
$\int_{\partial\Omega}\psi d\Lambda=0$. Then, there is a harmonic
function $U\in W^{3/2,2}_{\RR}(\Omega)$  such that 
$\partial_\tau\text{\rm tr}_{\partial\Omega}U=\psi$. Such a function
is unique up to an  additive real constant and
$\partial U\in\mathcal{S}^2(\Omega)$ with 
$\|\partial U\|_{\mathcal{S}^2(\Omega)}\leq C\|\psi\|_{L^2(\partial\Omega)}$, 
where $C$ depends only on $\Omega$. 
\end{proposition}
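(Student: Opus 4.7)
The overall strategy is to construct $U$ as the real part of a holomorphic function $\Phi$ on $\Omega$ whose complex derivative $\Phi'$ lies in $\mathcal{S}^2(\Omega)$. Indeed, if such a $\Phi$ is found, then by Theorem \ref{equivS2}$(ii)$ we have $\partial U=\Phi'/2\in W^{1/2,2}(\Omega)$; together with $U\in L^2(\Omega)$ (obtained by a suitable choice of additive real constant via the Poincar\'e inequality \eqref{estfonc}), this yields $U\in W^{3/2,2}_\RR(\Omega)$. Moreover, Proposition \ref{Hdtau} ensures that $\partial_\tau\,\tr_{\partial\Omega}U=\textrm{Re}(\Phi'\mathfrak{t})$ on $\partial\Omega$, so the task reduces to producing a holomorphic $\Phi'\in\mathcal{S}^2(\Omega)$ satisfying the boundary equation $\textrm{Re}(\Phi'\mathfrak{t})=\psi$ on $\partial\Omega$; the simple connectedness of $\Omega$ then furnishes the primitive $\Phi$ by complex integration.

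To solve this boundary equation I would pull back via a conformal map $\varphi:\DD\to\Omega$. Put $\tilde\psi:=(\psi\circ\varphi)|\varphi'|$ on $\TT$, so that $\int_\TT\tilde\psi\,dm=\int_{\partial\Omega}\psi\,d\Lambda=0$ and $\int_\TT\tilde\psi^2/|\varphi'|\,dm=\|\psi\|_{L^2(\partial\Omega)}^2$. Using $\mathfrak{t}(\varphi(e^{i\theta}))=\varphi'(e^{i\theta})ie^{i\theta}/|\varphi'(e^{i\theta})|$, the boundary equation rewrites as $\textrm{Re}(iz\varphi'(z)\Phi'(\varphi(z)))=\tilde\psi$ on $\TT$. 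I would then solve this by introducing the Schwarz integral
\[\Psi(z):=\frac{1}{2\pi}\int_\TT\frac{\xi+z}{\xi-z}\tilde\psi(\xi)\,dm(\xi),\qquad z\in\DD,\]
which is holomorphic on $\DD$ with $\Psi(0)=0$ (by the mean-zero hypothesis), $\textrm{Re}\,\Psi_{|\TT}=\tilde\psi$, $\textrm{Im}\,\Psi_{|\TT}=\widetilde{\tilde\psi}$, and which belongs to $H^p$ for every $p<1$ by Kolmogorov's theorem. Then define $\Phi'\circ\varphi:=\Psi/(iz\varphi')$, so that, by the conformal characterization of Smirnov spaces from Section \ref{sec:Smirnov}, $\Phi'\in\mathcal{S}^2(\Omega)$ if and only if the holomorphic function
\[K(z):=\frac{\Psi(z)}{iz\,(\varphi'(z))^{1/2}},\qquad z\in\DD,\]
which equals $(\Phi'\circ\varphi)(\varphi')^{1/2}$, lies in $H^2$.

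The technical heart is to establish $K\in H^2$. Since $\mathfrak{w}:=1/|\varphi'|$ satisfies the Muckenhoupt condition $A_2$ by Lemma \ref{conforL}, and $\tilde\psi\in L^1(\TT)$ by Cauchy--Schwarz against $|\varphi'|\in L^1(\TT)$, the Hunt--Muckenhoupt--Wheeden estimate \eqref{HMW} yields
\[\int_\TT|\Psi_{|\TT}|^2/|\varphi'|\,dm\leq\int_\TT\bigl(\tilde\psi^2+\widetilde{\tilde\psi}^2\bigr)/|\varphi'|\,dm\leq C\|\psi\|_{L^2(\partial\Omega)}^2,\]
so $K_{|\TT}\in L^2(\TT)$ with the expected bound. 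To promote this into genuine $H^2$-membership of $K$, I would combine the weighted boundary estimate with global Hardy-space information: another Cauchy--Schwarz gives $|\Psi_{|\TT}|\in L^1(\TT)$, and together with $\Psi\in H^p$ for some $p>0$ the inner-outer factorization upgrades $\Psi$ to $H^1$, whence $\Psi/z\in H^1$; moreover $1/\varphi'\in H^{p_0}$ for some $p_0>1$ by Lemma \ref{conforL}, so $1/(\varphi')^{1/2}\in H^{2p_0}$ with $2p_0>2$. The Hardy-class product lemma recalled in Section \ref{sec:Hardy}, applied to $\Psi/z$ and $1/(\varphi')^{1/2}$ whose pointwise product has $L^2(\TT)$ boundary values, then delivers $K\in H^2$ with $\|K\|_{H^2}\leq C\|\psi\|_{L^2(\partial\Omega)}$.

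For uniqueness, if $U_1, U_2$ are two solutions then $U:=U_1-U_2\in W^{3/2,2}_\RR(\Omega)$ is harmonic with $\tr_{\partial\Omega}U\in W^{1,2}_\RR(\partial\Omega)$ by Proposition \ref{Hdtau} and $\partial_\tau\,\tr_{\partial\Omega}U\equiv 0$; since $\partial\Omega$ is a connected Jordan curve, $\tr_{\partial\Omega}U$ is constant, whence $U$ equals that constant in $\Omega$ by the maximum principle. The principal obstacle in the plan is the passage from the weighted $L^2$ boundary estimate to $H^2$-membership of $K$: neither the $A_2$-weighted Hilbert-transform bound alone (which yields only $L^2$ boundary values) nor Kolmogorov's theorem alone (which yields only $H^{<1}$) suffices, and the key is to combine them via the inner-outer factorization together with the product lemma for Hardy classes.
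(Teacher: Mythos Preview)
Your proof is correct and follows essentially the same route as the paper's: conformal pullback to $\DD$, construction of the holomorphic function $\Psi$ with prescribed boundary real part $\tilde\psi$, the $A_2$-weighted conjugate-function estimate \eqref{HMW} with weight $1/|\varphi'|$ to control $\|\Psi_{|\TT}(\varphi')^{-1/2}\|_{L^2(\TT)}$, and the Hardy-class product lemma to conclude that $K\in H^2$. The one noteworthy difference is how you secure an initial Hardy-space membership for $\Psi$: the paper observes via H\"older's inequality (using that $|\varphi'|^{1/2}\in L^\ell(\TT)$ for some $\ell>2$, from Lemma~\ref{conforL}) that $\tilde\psi\in L^p(\TT)$ for some $p>1$, and then the M.~Riesz theorem gives $\Psi\in H^p$ directly; this avoids Kolmogorov's theorem and the detour through $H^{<1}$ plus inner--outer factorization that you use to reach $\Psi\in H^1$. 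Both routes arrive at the same place, but the paper's is slightly more economical and stays within the $H^p$, $p\geq1$, framework set up in Section~\ref{sec:Hardy}.
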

\begin{proof}
As  $U$ is continuous on $\overline{\Omega}$
by the Sobolev embedding theorem, and 
$\tr_{\partial\Omega}U\in W^{1,2}(\partial\Omega)$
by { Proposition \ref{Hdtau}},
uniqueness follows from the maximum principle for harmonic functions.

Next, let $\varphi$ map $\DD$ conformally onto $\Omega$ and {{$\Upsilon$ be the inverse map. Define
$h=(\psi\circ\varphi)|\varphi'|$ on $\TT$. By Lemma \ref{conforL}
$\|\psi\|_{ L^2(\partial\Omega)}=\|h(\varphi')_{|\TT}^{-1/2}\|_{L^2(\TT)}$
and 
$(\varphi')_{|\TT}^{1/2}\in L^\ell(\TT)$ for some $\ell>2$.
Therefore  $h\in L^p(\TT)$ for some $p>1$, by H\"older's inequality.}}
Moreover $\int_\TT h dm=\int_{\partial\Omega}\psi d\Lambda=0$, hence
by the M. Riesz theorem there 
is $G\in H^p$ such that $G(0)=0$ and $\text{\rm Re} \, G_{|\TT}=h$.
Because $\|\psi\|_{ L^2(\partial\Omega)}=
\|h(\varphi')_{|\TT}^{-1/2}\|_{L^2(\TT)}$
and $1/|\varphi'|$ meets  condition
$A_2$ by Lemma \ref{conforL}, we get from \eqref{HMW} that 
$\|G_{|\TT}(\varphi')_{|\TT}^{-1/2}\|_{L^2(\TT)}\leq 
C\|\psi\|_{L^2(\partial\Omega)}$ with $C=C(\varphi)$. 
Therefore, as $G\in H^p$ while $(\varphi')^{-1/2}\in H^2$ by Lemma
\ref{conforL}, the product $H=G(\varphi')^{-1/2}$ lies in $H^2$
and $\|H\|_{H^2}\leq C\|\psi\|_{L^2(\partial\Omega)}$.
Since $H(0)=0$ the function $H_1(z)= H(z)/(iz)$ 
in turn lies in $H^2$ with same norm as $H$,
and consequently 
\begin{equation}
\label{appS2tat}
F(\zeta):=\frac{H_1({ \Upsilon}(\zeta))}{\left(\varphi'({ \Upsilon}(\zeta))\right)^{1/2}}
=\frac{G({ \Upsilon}(\zeta))}{i{ \Upsilon}(\zeta)
\varphi'({ \Upsilon}(\zeta))}\in \mathcal{S}^2(\Omega)\
\end{equation}
with $\|F\|_{\mathcal{S}^2(\Omega)}\leq C\|\psi\|_{L^2(\partial\Omega)}$.
In view of Lemma \ref{conforL}, $\zeta\in\Omega$ converges nontangentially to 
$\xi\in\partial\Omega$ if, and only if $z={ \Upsilon}(\zeta)\in\DD$ 
converges nontangentially to $e^{i\theta}={ \Upsilon}(\xi)\in\TT$. 
Since
$ie^{i\theta}\varphi'(e^{i\theta})/|\varphi'(e^{i\theta})|=\mathfrak{t}(\varphi(e^{i\theta}))$,
{ with $\mathfrak{t}$ the tangent vector field in complex form as}
defined before
{ Proposition \ref{Hdtau}, we see from equation
\eqref{appS2tat} and the definition of $G$} that 
\begin{equation}
\label{trF}
\textrm{Re}\left(F(\xi) \,\mathfrak{t}(\xi)
\right)
=\psi(\xi), \qquad \Lambda-\textrm{a.e. } \xi\in\partial\Omega.
\end{equation}

{ Let $U$} be harmonic and real-valued in $\Omega$ with
$\partial U=F/2$. { Clearly $U$ 
exists, for} $F(z)dz+\overline{F(z)}d\bar z$ is a closed 
real-valued differential on the simply connected domain
$\Omega$. { Moreover,} $U\in W_\RR^{3/2,2}(\Omega)$ because 
$\partial U\in W^{1/2,2}(\Omega)$ by Theorem \ref{equivS2} $(ii)$.
Then, it follows from \eqref{trF} and { Proposition \ref{Hdtau}} that
$\partial_\tau U=\psi$. 
\end{proof}

\section{Proof of Theorem \ref{cauchy1}}
\label{sec:proof1}

{
In Section \ref{ssec:fr}, we { state Theorem \ref{normeq3/2}} which is 
instrumental for the proof of  { Theorem \ref{cauchy1}  but is
also} of independent interest. 
{ It}  is proved in Section \ref{sec:62}, along with 
generalizations of results { from Section \ref{Smha} to  more general 
conductivity equations, and a version of Rolle's theorem in $W^{1,2}(\RR)$.
Finally, the proof of  Theorem \ref{cauchy1} is given} in Section 
\ref{subs:proof1}.
}

\subsection{Factorization and regularity}
\label{ssec:fr}
\begin{theorem}
\label{normeq3/2}
Assume that $\Omega\subset\RR^2$ is a bounded Lipschitz 
domain and that $\sigma$ satisfies  \eqref{isot}-\eqref{ellip}.
Let $u \in W^{1,2}_\RR(\Omega)$ be a solution to
\eqref{forcond} { which is
such that} $\partial_n u\in L_\RR^2(\partial\Omega)$. Then:
\begin{itemize}
\item[(i)]$u\in W^{3/2,2}_\RR(\Omega)$ and $\partial u=e^{\Psi}\Phi$ where
$\Psi\in W^{1,r}(\Omega)$ and $\Phi\in \mathcal{S}^2(\Omega)$. Moreover
$\nabla u$ converges nontangentially to 
$\partial_\tau u\,\tau+\partial_n u \,n$ on $\partial\Omega$, and if
$u$ gets normalized so that $u(z_0)=0$ for some
$z_0\in\Omega$, there is a constant 
$C$ depending only on $\Omega$, $z_0$, $r$, $\|\sigma\|_{W^{1,r}(\Omega)}$ 
and $c$ 
in \eqref{ellip}
such that
\begin{equation}
\label{domnu3/2}
\|u\|_{W^{3/2,2}(\Omega)}\leq C\|\partial_n u\|_{L^2(\partial\Omega)}.
\end{equation}
\item[(ii)] For each $\alpha>1$, it holds that 
\begin{equation}
\label{equivmax3/2}
\|\partial_\tau u\|_{L^2(\partial\Omega)}\sim
\|\partial_n u\|_{L^2(\partial\Omega)}\sim
\|\mathcal{M}_\alpha \nabla u\|_{L^2(\partial\Omega)},
\end{equation}
where constants depend only on $\Omega$, $r$,  $\|\sigma\|_{W^{1,r}(\Omega)}$, $c$ 
in \eqref{ellip},  and also on $\alpha$
as to the second equivalence.
\item[(iii)] We have that $u\in W^{2,r}_{\RR,loc}(\Omega)$ and that
\begin{equation}
\label{equivpond3/2}
\sum_{j=1,2}\|\textrm{d}(.,\partial\Omega)^{1/2}\,\partial_{x_j}\nabla u\|^2_{L^2(\Omega)}+
 \|u\|^2_{W^{1,2}(\Omega)} 
\sim\|u\|_{W^{3/2,2}(\Omega)}^2,
\end{equation}
where constants depend only on $\Omega$, $r$, $\|\sigma\|_{W^{1,r}(\Omega)}$, 
{ and} $c$ 
in \eqref{ellip}.
\end{itemize}
\end{theorem}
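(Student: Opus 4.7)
The plan is organized around the Bers-type factorization in (i), from which parts (ii) and (iii) follow by unwrapping through the Smirnov space machinery of Section \ref{secH00}. I would begin with a direct computation in complex derivatives: rewriting \eqref{forcond} as $4\sigma\,\bar\partial\partial u+2(\partial\sigma)\bar\partial u+2(\bar\partial\sigma)\partial u=0$, one verifies that $w:=\sqrt{\sigma}\,\partial u$ satisfies the pseudo-analytic equation
\[
\bar\partial w = B\,\bar w,\qquad B:=-\frac{\partial\sigma}{2\sigma}\in L^r(\Omega),
\]
the $\partial u$-terms canceling thanks to the $\sqrt{\sigma}$-weight (using $\bar\partial u=\overline{\partial u}=\bar w/\sqrt\sigma$ for real $u$). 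I would then apply Bers' similarity principle: since $|\bar w/w|=1$ a.e.\ on $\{w\neq0\}$ and may be extended by any bounded measurable value on $\{w=0\}$, the function $B\,\bar w/w$ lies in $L^r(\Omega)$, and its Cauchy transform yields $s\in W^{1,r}(\CC)$ with $\bar\partial s=B\,\bar w/w$ in $\Omega$. Then $\Phi:=w\,e^{-s}$ satisfies $\bar\partial\Phi=e^{-s}(\bar\partial w-w\bar\partial s)=0$, hence is holomorphic in $\Omega$, and setting $\Psi:=s-\tfrac12\log\sigma\in W^{1,r}(\Omega)$ delivers $\partial u=e^{\Psi}\Phi$.

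The principal obstacle is to upgrade $\Phi$ to an element of $\mathcal{S}^2(\Omega)$ with $\|\Phi\|_{\mathcal{S}^2(\Omega)}\le C\,\|\partial_n u\|_{L^2(\partial\Omega)}$. My plan is to use Proposition \ref{Neucor}: by \eqref{compflux}, $\sigma\partial_n u\in L^2(\partial\Omega)$ has zero mean, so there is a harmonic $U\in W_{\RR}^{3/2,2}(\Omega)$ with $\partial_\tau U=\sigma\partial_n u$ and $\partial U\in\mathcal{S}^2(\Omega)$, $\|\partial U\|_{\mathcal{S}^2}\le C\,\|\partial_n u\|_{L^2(\partial\Omega)}$. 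The factorization gives the boundary identity $\partial_\tau u+i\partial_n u=2e^{\Psi}\Phi\,\mathfrak{t}$ on $\partial\Omega$, while Proposition \ref{Hdtau} applied to $U$ yields $\partial_\tau U=2\,\mathrm{Re}(\partial U\,\mathfrak{t})$. I would next introduce the $\sigma$-harmonic conjugate $v$ of $u$ (well defined on the simply connected $\Omega$ since $\sigma\nabla u$ is divergence-free): then $\partial_\tau v=\sigma\partial_n u=\partial_\tau U$ on $\partial\Omega$, so $\mathrm{tr}_{\partial\Omega}(v-U)$ is constant and in particular $\mathrm{tr}\,v\in W^{1,2}(\partial\Omega)$. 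The function $v$ solves the conductivity equation with coefficient $\sigma^{-1}\in W^{1,r}$ and thus admits an analogous factorization $\partial v=e^{\Psi_v}\Phi_v$; the $W^{1,2}(\partial\Omega)$ control of $\mathrm{tr}\,v$ translates into an $L^2(\partial\Omega)$ bound for $\mathrm{Re}(e^{\Psi_v}\Phi_v\,\mathfrak{t})$. Transferring everything to $\DD$ via the conformal map $\varphi$ of Lemma \ref{conforL} and invoking the M.~Riesz conjugation theorem in its weighted form (justified by the Muckenhoupt $A_2$-condition on $|\varphi'|$), this real-part data recovers the full $L^2$ trace of $\Phi_v$, and in turn that of $\Phi$ through the $\sigma$-conjugacy relating $\nabla u$ and $\nabla v$, with the required bound. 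This intertwining of the conductivity structure with the complex-analytic boundary behavior is the technical heart of the argument.

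Once $\Phi\in\mathcal{S}^2(\Omega)$ is secured, the remaining claims are quick. For (i): Theorem \ref{equivS2}(ii) gives $\Phi\in W^{1/2,2}(\Omega)$; since $\Psi\in W^{1,r}\cap L^\infty$ is a pointwise multiplier on $W^{1/2,2}(\Omega)$ for $r>2$, we get $\partial u=e^{\Psi}\Phi\in W^{1/2,2}(\Omega)$, whence $u\in W^{3/2,2}_\RR(\Omega)$, and \eqref{domnu3/2} follows by chaining the above estimates with the Poincar\'e inequality \eqref{estfonc} under the normalization $u(z_0)=0$; nontangential convergence of $\nabla u$ to $\partial_\tau u\,\tau+\partial_n u\,n$ follows since $\mathcal{S}^2$-functions have nontangential limits $\Lambda$-a.e.\ and $\Psi\in C^0(\overline\Omega)$ by Sobolev embedding. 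For (ii): the first equivalence in \eqref{equivmax3/2} is immediate from $(\partial_\tau u)^2+(\partial_n u)^2=4|e^{\Psi}|^2|\Phi|^2$ on $\partial\Omega$ and the $\mathcal{S}^2$-bound of the previous step, while the second follows from Theorem \ref{equivS2}(i) combined with the pointwise comparison $|\nabla u|\sim|\Phi|$ in $\Omega$. For (iii): holomorphy of $\Phi$ and $\Psi\in W^{1,r}(\Omega)$ yield $\partial u\in W^{1,r}_{loc}(\Omega)$ and hence $u\in W^{2,r}_{\RR,loc}(\Omega)$, while the weighted norm equivalence \eqref{equivpond3/2} follows by applying \eqref{inegfracdb} component-wise to $\nabla u$ and using that the two sides of \eqref{inegfracdb} are equivalent for the harmonic-like components of $\nabla u$, as recalled after \eqref{inegfracdb}.
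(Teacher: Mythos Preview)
The principal gap is in the step upgrading $\Phi$ to $\mathcal{S}^2(\Omega)$. Your Bers factorization yields $\Psi=s-\tfrac12\log\sigma$ whose boundary trace is generically \emph{complex-valued}. Consequently the datum $\partial_n u\in L^2(\partial\Omega)$ gives only $\mathrm{Im}(e^{\Psi}\Phi\,\mathfrak t)\in L^2(\partial\Omega)$, and the weighted M.~Riesz theorem cannot convert this into control of $|\Phi|$: after pullback by $\varphi$, the function whose imaginary part you control is $e^{\Psi\circ\varphi}\cdot(\Phi\circ\varphi)\cdot ie^{i\theta}\varphi'/|\varphi'|$, which is \emph{not} the boundary trace of a holomorphic function because the phase $e^{i\,\mathrm{Im}\,\Psi}$ is non-holomorphic. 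The detour through the $\sigma$-harmonic conjugate $v$ does not add information: since $\partial v=-i\sigma\,\partial u$, the identity $\partial_\tau v=\sigma\,\partial_n u$ shows that your ``second'' datum $\mathrm{Re}(e^{\Psi_v}\Phi_v\,\mathfrak t)\in L^2$ is literally the same real boundary equation rewritten. You still have one real constraint for a complex unknown, and the same obstruction to M.~Riesz applies to $\Phi_v$.

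The paper (Lemma~\ref{BNu}) repairs exactly this point. After a first factorization $\partial u=e^{\Upsilon_1}F_1$ it solves a Dirichlet problem for a harmonic $h$ with $\mathrm{tr}_{\partial\Omega}h=\mathrm{Im}\,\mathrm{tr}_{\partial\Omega}\Upsilon_1$, forms the holomorphic $G=-g+ih$, and sets $\Upsilon=\Upsilon_1-G$, $F=F_1e^{G}$. Now $\Upsilon$ has \emph{real} trace, so the boundary relation splits cleanly as $\partial_n H=e^{-\Upsilon}\partial_n u$ and $\partial_\tau H=e^{-\Upsilon}\partial_\tau u$ for the harmonic $H$ with $\partial H=F$; passing to the harmonic conjugate of $H$ and applying Proposition~\ref{Neucor} then yields $F\in\mathcal{S}^2(\Omega)$ with the required bound. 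Once you insert this real-trace correction, the rest of your outline for (i)--(ii) proceeds as written. For (iii), note that $e^{\Upsilon}F$ is not itself harmonic, so one cannot simply quote the equivalence after \eqref{inegfracdb}; the paper controls the extra cross term $\mathrm d(\cdot,\partial\Omega)^{1/2}(\nabla\Upsilon)e^{\Upsilon}F$ via the Hardy--Littlewood pointwise estimate $|F(z)|\le c\,\|F\|_{\mathcal{S}^2(\Omega)}/\mathrm d(z,\partial\Omega)^{1/2}$, which absorbs the distance weight.
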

The proof of Theorem \ref{cauchy1} dwells on the factorization
$\partial u=e^{\Psi}\Phi$ introduced in Theorem \ref{normeq3/2} $(i)$
and on a generalized form of Rolle's theorem given in Proposition \ref{real}.
Roughly speaking, the latter shows that if 
both $u$ and $\partial_n u$ vanish on a subset of 
positive measure of $\partial\Omega$, then 
the full gradient $\nabla u=\partial_\tau u\,\tau+\partial_n u \,n$  also
has to vanish on such a  set.
Consequently, Theorem \ref{normeq3/2} shows that the gradient vanishes
everywhere in $\Omega$, because $\partial u$ factors through a holomorphic 
function of 
Smirnov class which cannot vanish on a subset of positive measure of 
$\partial\Omega$ if it is not identically zero. 

The regularity results needed 
to put this approach to work are set forth in  Theorem \ref{normeq3/2}
points { {$(i)$-$(ii)$. Point $(ii)$ is known,
even in higher 
dimension and with less regular $\sigma$, provided $\Omega$ is starlike
\cite{KP}.  Point $(iii)$ is not used
but mentioned for its own sake, as it generalizes to more 
general conductivities, in the case where $n=2$, the equivalence 
between the two hand sides of \eqref{inegfracdb}
established for harmonic functions in \cite[Thm 4.1]{JKinhom}}}.

\subsection{Proof of Theorem \ref{normeq3/2}}
\label{sec:62}

\subsubsection{The $\sigma$-harmonic conjugate function}
\label{sigmaHC}

When $\Omega\subset\RR^2$ is simply connected,
we observe that
\eqref{forcond} is a compatibility condition for the generalized Cauchy-Riemann
system:
\begin{equation} \label{system}
\left\{
\begin{array}{l}
\partial_{x_1}v=-\sigma\partial_{x_2} u,\\
\partial_{x_2}v=\sigma\partial_{x_1}u,
\end{array}
\right.
\end{equation}
with unknown real-valued functions $u,v$. In fact, 
\eqref{forcond} is equivalent to the Schwarz rule
$\partial_{x_2}\partial_{x_1} v=\partial_{x_1}\partial_{x_2} v$ in 
\eqref{system},
thus there is a distribution $v$ to meet the latter whenever 
$u\in W^{1,2}_\RR(\Omega)$ satisfies
\eqref{forcond} \cite[Ch. II, Sec. 6, Thm VI]{Schwartz}. 
From
\eqref{ellip} and \eqref{system} we get that 
$|\nabla v|\in L_\RR^2(\Omega)$, hence
$v\in W_\RR^{1,2}(\Omega)$ and  
$\partial_\tau\tr_{\partial\Omega}v$ exists in 
$W^{-1/2,2}_\RR(\partial\Omega)$.
The function $v$ is a so-called  $\sigma$-harmonic conjugate to $u$,
and it is unique up to an additive constant by 
\eqref{estfonc}.  

Because \eqref{system} entails that $\nabla v$ is 
the rotation of $\sigma\nabla u$ by $\pi/2$  on $\Omega$,
it may be surmised that 
$\partial_n u=\partial_\tau \tr_{\partial\Omega}v/\sigma\in W_\RR^{-1/2,2}(\partial\Omega)$. This is indeed the case, as follows from
the Green formula 
on Lipschitz domains \cite[Ch. 3, Thm 1.1]{Necas}:
\begin{equation}
\label{GreenNecas}
\int_\Omega(h \, \partial_{x_i} g +g \, \partial_{x_i}h)dm=\int_{\partial\Omega}g \, h \, n_{x_i}\,
d\Lambda \, , \ g,h\in W^{1,2}(\Omega)\, , \  i = 1,2 \, ,  
\end{equation}
where we have put $n=(n_{x_1},n_{x_2})^t$. In fact, since $\tau=(-n_{x_2},n_{x_1})$,
it holds for $\varphi\in\mathcal{D}(\RR^2)$ that 
\[\int_{\partial\Omega}(\partial_\tau v)\varphi\,d\Lambda=
-\int_{\partial\Omega}v\nabla\varphi.\tau\,d\Lambda=
\int_{\partial\Omega}v(n_{x_2}\partial_{x_1}\varphi-n_{x_1}\partial_{x_2}\varphi)\,d\Lambda
\]
so that, by \eqref{GreenNecas},
\begin{equation}
\label{def2nd}
\int_{\partial\Omega}\partial_\tau \, v \, \varphi\,d\Lambda=
\int_\Omega\left(\partial_{x_2}v \, \partial_{x_1}\varphi-\partial_{x_1}v \, \partial_{x_2}\varphi\right)\,dm
=\int_\Omega\sigma \, \nabla u \, . \, \nabla \, \varphi\,dm.
\end{equation}
By density, we conclude on comparing \eqref{defndw} and \eqref{def2nd} that
$\partial_n u=\partial_\tau v/\sigma$, as announced. 
It is easy to check that $v$ satisfies \eqref{forcond} with
$\sigma$ replaced by $1/\sigma$, so the previous discussion also yields that
$\partial_n v=-\sigma \, \partial_\tau u$ on $\partial\Omega$. 
In fact, the peculiarity of the planar case is that solving the 
Neumann problem in $W^{1,2}_\RR(\Omega)$
for the conductivity equation \eqref{forcond}, 
with normal derivative $g\in W^{-1/2,2}_\RR(\partial\Omega)$, is
tantamount to solve  
the Dirichlet problem in $W^{1,2}_\RR(\Omega)$ for a conductivity equation
{ having} conductivity $1/\sigma$ 
{ with} tangential derivative 
$\sigma g\in W^{-1/2,2}_\RR(\partial\Omega)$, 
and then compute the $\sigma$-harmonic conjugate.
In particular 
uniqueness-up-to-a-constant of energy solutions implies that a solution
$u\in W^{1,2}(\Omega)$ to \eqref{forcond} meeting $\partial_\tau u=0$ is 
a constant.

The functions $v$ and $f=u+iv$, which lie respectively
in $W^{1,2}_\RR(\Omega)$ and $W^{1,2}(\Omega)$,
will be instrumental to our analysis. 
For definiteness, we normalize $v$ (initially defined up to an additive 
real constant)  so that $\int_{\partial\Omega}v \, d \Lambda=0$.
A short computation (see \cite[Sec. 3.1]{BLRR})
shows that $f$ satisfies on $\Omega$ the conjugate Beltrami equation:
\begin{equation}
\label{CB}
\bar \partial f=\nu \overline{\partial f},\qquad \nu=(1-\sigma)/(1+\sigma).
\end{equation}
Note that $\|\nu\|_{L^\infty(\Omega)}<1$ and that $\nu\in W^{1,r}_\RR(\Omega)$
because of \eqref{estfonc}, \eqref{isot} and \eqref{ellip}. 
Interior  regularity estimates for \eqref{CB}
imply that $f\in W_{loc}^{2,r}(\Omega)$ \cite[Cor. 3.3]{BFL} 
(see also Section
\ref{facder}),
hence also 
$u,v\in W^{2,r}_{\RR,loc}(\Omega)$.
In particular, by the Sobolev embedding theorem,
$\nabla u,\nabla v$ are locally H\"older 
continuous on $\Omega$.

Let $\{\Omega_k\}$ be
a sequence of open subsets of $\Omega$ with smooth boundary such that 
$\overline{\Omega_k}\subset\Omega_{k+1}$ and $\cup_n\Omega_k=\Omega$.
Whenever $u\in W^{1,2}(\Omega)$ satisfies
\eqref{forcond} and $g\in W^{-1/2,2}(\partial\Omega)$, 
it follows from \eqref{defndw}  (and its analog 
on $\Omega_k$) by means of the Schwarz inequality,
and since $\|\nabla u\|_{L^2(\Omega\setminus\Omega_n)}\to 0$ as $k\to\infty$,
that 
\begin{equation}
\label{seqGreen}
\partial_n u= g\ \Longleftrightarrow\ 
\lim_{k\to\infty}\int_{\partial\Omega_k}\!\!\sigma\nabla u.n\,\psi\,d\Lambda=
\langle \sigma g\,, \,\tr_{\partial\Omega}\psi\rangle,\quad \psi\in W^{1,2}_\RR(\Omega),
\end{equation}
where $n$ denotes the unit normal on $\partial\Omega_k$, irrespective of $k$.
Elaborating on this,  we let $\mathfrak{n}$ indicate the complex number
$n_x+in_y$ where $(n_x,n_y)^t=n$, and we observe upon
making use of \eqref{system} that
\[\sigma \, \partial u \, \mathfrak{n}=\sigma\,\frac{\partial_{x_1}u-i\partial_{x_2}u}{2}\,\mathfrak{n}=
\left(\sigma \, \nabla u+i \, \nabla v \right).\frac{n}{2} \, ,\]
where ``$.$'' indicates the Euclidean scalar product.
In view of \eqref{seqGreen} and its analog for $v$
(remember $v$ satisfies \eqref{forcond} with $\sigma$ replaced by $1/\sigma$),
we  obtain:
\[
u\in W^{1,2}(\Omega) \text{ satisfies \eqref{forcond} with }  \partial_nu= g   \text{ and } \partial_\tau u=-h
\]
\begin{equation}
\label{seqGreenc}
\Longleftrightarrow\ 
2\lim_{k\to\infty}\int_{\partial\Omega_k}\!\!\!\!\!\sigma \, \partial u \, \mathfrak{n} \,
\psi\,d\Lambda=
\langle \sigma \, (g+ih)\,, \,\tr_{\partial\Omega}\psi\rangle \, , \ \psi\in W^{1,2}(\Omega)  \, ,
\end{equation}
where we complexified the space of test functions 
({\it i.e.} from $\psi\in W^{1,2}_\RR(\Omega)$ to $\psi\in W^{1,2}(\Omega)$)
upon extending the pairing $\langle\,,\,\rangle$ in a complex-linear manner. 

\subsubsection{Factorization of the complex derivative}
\label{facder}
The lemma below substantially reduces the study of solutions to 
\eqref{forcond}, when $n=2$ and \eqref{isot}-\eqref{ellip} hold,
to that of harmonic functions.
\begin{lemma}
\label{BNu}
Let $u\in W^{1,2}_\RR(\Omega)$ satisfy \eqref{forcond} on a bounded 
simply connected Lipschitz domain $\Omega\subset\RR^2$, with $\sigma$ subject
to \eqref{isot} and \eqref{ellip}. Then, there exists
a holomorphic function $F\in L^2(\Omega)$, a number $r_1\in(2,r]$,
{ a function $\Upsilon\in W^{1,r_1}(\Omega)$
with real-valued $\tr_{\partial\Omega}\Upsilon$ 
whose} norm is bounded solely in terms of $\Omega$, 
$\|\sigma\|_{W^{1,r}(\Omega)}$, $r$,  
and ellipticity constants in \eqref{ellip}, such that
$\partial u=e^\Upsilon F$. Moreover $F=\partial H$ where 
$H\in W^{1,2}_\RR(\Omega)$ is harmonic in $\Omega$ and satisfies
on $\partial\Omega$:
\begin{equation}
\label{deriveesHr}
\partial_n H=e^{-\Upsilon}\partial_n u,
\qquad
\partial_\tau H=e^{-\Upsilon}
\partial_\tau u.
\end{equation}
\end{lemma}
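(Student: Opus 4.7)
The plan is to proceed in three main stages: first, derive a Vekua-type equation satisfied by $w:=\partial u$; then apply a Bers–Vekua similarity-principle argument to obtain a preliminary factorization $\partial u = e^{s}F_0$; finally adjust $s$ by a holomorphic corrector to make the boundary trace real-valued, and construct $H$ from $F$.

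For the first step, expanding $\nabla\cdot(\sigma\nabla u)=0$ and using $\Delta=4\partial\bar\partial$ together with the identity $\nabla a\cdot\nabla b=2(\partial a\,\bar\partial b+\bar\partial a\,\partial b)$ (valid since $u$ and $\sigma$ are real), one obtains
\[
\bar\partial w = A w + B\bar w \quad\text{in }\Omega,\qquad A=-\tfrac{1}{2}\bar\partial\log\sigma,\ \ B=-\tfrac{1}{2}\partial\log\sigma,
\]
where $A,B\in L^r(\Omega)$ with norms controlled by $\|\sigma\|_{W^{1,r}(\Omega)}$ and the ellipticity constants in \eqref{ellip}. Note also that $\bar w=\bar\partial u=\overline{\partial u}$ since $u$ is real.

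For the second step, set $\rho:=\bar w/w$ where $w\neq0$ and $\rho:=0$ otherwise; then $|\rho|\leq 1$ everywhere and $g:=A+B\rho\in L^r(\Omega)$. Extending $g$ by zero outside $\Omega$ and letting $s:=T[g]$ (the Cauchy–Pompeiu transform) yields $s\in W^{1,r}(\Omega)$ with $\bar\partial s=g$ distributionally; since $r>2$, $s$ is even Hölder-continuous on $\overline\Omega$, so $e^{\pm s}\in L^\infty(\Omega)$. A direct calculation then shows that $F_0:=e^{-s}w$ satisfies $\bar\partial F_0=0$ (checking separately on $\{w\neq0\}$ and $\{w=0\}$), so $F_0$ is holomorphic in $\Omega$ and in $L^2(\Omega)$ because $\partial u\in L^2(\Omega)$.

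For the third step (the main obstacle), I adjust $s$ so the boundary trace becomes real. Let $\psi:=\mathrm{Im}(\tr_{\partial\Omega}s)$, which lies in a fractional Sobolev space of the boundary inherited from $s\in W^{1,r}(\Omega)$. Solve the Dirichlet problem for a real harmonic $V$ with $\tr_{\partial\Omega}V=-\psi$ (Proposition \ref{Neucor} together with the Smirnov theory of Section \ref{secH00} provide the right regularity framework), take a real harmonic conjugate $U$ of $V$ (which exists by simple connectedness of $\Omega$), and put $h:=U+iV$, a holomorphic function on $\Omega$. Set $\Upsilon:=s+h$ and $F:=e^{-h}F_0$: then $\partial u=e^{\Upsilon}F$, $F$ is still holomorphic and in $L^2(\Omega)$, and $\tr_{\partial\Omega}\Upsilon=\mathrm{Re}(\tr s)+\tr U$ is real. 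The regularity $\Upsilon\in W^{1,r_1}(\Omega)$ for some $r_1\in(2,r]$ follows by choosing $r_1$ in the range where the planar Lipschitz Dirichlet problem is well-posed in the corresponding scale, and the required norm bound on $\Upsilon$ is tracked through $T$, the conjugation operator and the Dirichlet solution.

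It remains to extract $H$ and check the boundary identities. Because $\Omega$ is simply connected and $F\in L^2(\Omega)$ is holomorphic, $2F$ admits a holomorphic primitive $G$ on $\Omega$; set $H:=\mathrm{Re}\,G$. Then $H$ is real harmonic with $\partial H=F$, and since $|\nabla H|=2|F|\in L^2(\Omega)$ we get $H\in W^{1,2}_\RR(\Omega)$. For the boundary relations, I use the identity
\[
2\,\partial u \cdot \mathfrak{n}=\partial_n u-i\,\partial_\tau u,
\]
with the analogous identity for $H$; since $e^\Upsilon$ is real-positive on $\partial\Omega$, the relation $\partial u=e^{\Upsilon}\partial H$ separates into real and imaginary parts to give exactly $\partial_n H=e^{-\Upsilon}\partial_n u$ and $\partial_\tau H=e^{-\Upsilon}\partial_\tau u$. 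The trickiest point of the whole argument is establishing the precise fractional regularity of $\Upsilon$ needed so that these boundary equalities make sense in the appropriate trace space, for which the framework set up in Sections \ref{sec:Sob}--\ref{secH00} is essential.
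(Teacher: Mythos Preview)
Your proposal is essentially correct and mirrors the paper's strategy: derive a pseudo-holomorphic (Vekua-type) equation, apply the Bers similarity principle, correct the exponent by a holomorphic function so that its boundary trace is real, and extract the harmonic $H$ as (the real part of) a primitive. The paper takes a slightly different route to the first step: rather than working directly with $w=\partial u$, it forms $f=u+iv$ with $v$ the $\sigma$-harmonic conjugate, notes that $f$ satisfies the conjugate Beltrami equation $\bar\partial f=\nu\overline{\partial f}$ with $\nu=(1-\sigma)/(1+\sigma)$, and then shows that $(1-\nu^2)^{1/2}\partial f$ obeys a Vekua equation with only a $\bar w$ coefficient. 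Your direct derivation is more elementary (no conjugate function needed), while the paper's route ties into the conjugate Beltrami framework and yields a Vekua equation of simpler form; both lead to the same factorization after the holomorphic correction.

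Two points deserve tightening. First, the regularity $r_1\in(2,r]$ for the harmonic corrector does \emph{not} come from Proposition~\ref{Neucor} (which solves $\partial_\tau U=\psi$, a Neumann-type problem); the paper invokes instead \cite[Thm~5.1]{JKinhom} on well-posedness of the Dirichlet problem in $W^{\theta+1/p,p}(\Omega)$ for Lipschitz $\Omega$ when $p\in[2,2+\varepsilon)$, and this is precisely where the loss from $r$ to $r_1$ occurs. Second, your derivation of \eqref{deriveesHr} via the pointwise identity $2\,\partial u\,\mathfrak{n}=\partial_n u-i\partial_\tau u$ is formal at this stage, since $\partial_n u$ and $\partial_\tau u$ are so far only elements of $W^{-1/2,2}(\partial\Omega)$ and nontangential boundary values of $\nabla u$ have not yet been established. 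The paper handles this rigorously through the limit characterizations \eqref{seqGreenc}--\eqref{seqGreencH} on smooth approximating subdomains, using that $\psi\mapsto\sigma e^{\Upsilon}\psi$ is an isomorphism of $W^{1,2}(\Omega)$ to pass from the weak definition of $\partial_n u$ to that of $\partial_n H$; your pointwise identity is then recovered \emph{a posteriori} once Theorem~\ref{normeq3/2}\,$(i)$ is proved.
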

\begin{proof}
Let $f=u+iv$ where $v$ is the $\sigma$-harmonic conjugate to $u$.
Since $f\in W^{2,r}_{loc}$ is a fortiori locally bounded and
 satisfies \eqref{CB} with 
$\nu\in W^{1,r}_\RR(\Omega)$ and $\|\nu\|_{L^\infty(\Omega)}<1$,  
a short computation as  in the proof of \cite[Cor. 3.3]{BFL} or
\cite[Lem. 5]{BLRR}
shows that 
$w:=(1-\nu^2)^{1/2}\partial f$ satisfies 
$\bar\partial w=(\partial\nu/(1-\nu^2))\bar w$. As
$\partial\nu/(1-\nu^2)\in L^r(\Omega)$ and $r>2$, the Bers similarity
principle for pseudo-holomorphic functions (see {\it e.g.}
\cite[Prop. 3.2]{BFL})
entails that
there exist $s\in W^{1,r}(\Omega)$, whose norm\footnote{Reference \cite{BFL} deals with Dini-smooth
$\Omega$ but this assumption is not used in the proof of 
equations (18), (19) {\it loc. cit.} The similarity principle
is called \emph{representation of pseudo-analytic functions of the first kind}
in \cite[Ch. III, Sec. 4]{vekua}, { and later appeared in many works}.} is bounded in terms of 
$r$ and $\|\partial\nu/(1-\nu^2)\|_{L^r(\Omega)}$ only,
and also a holomorphic function $F_1$ on $\Omega$ such that
$w=e^s F_1$. Hence $\partial f=e^{s_1}F_1$, where 
$s_1=s-\log(1-\nu^2)^{1/2}$ belongs to $W^{1,r}(\Omega)$ by \eqref{isot},
\eqref{ellip} { and {\eqref{estfonc}}}.
Now, it is straightforward to check
using \eqref{system} that $\partial f=(1+\sigma)\partial u$.
Therefore, if we set $\Upsilon_1=s_1-\log(1+\sigma)$ and appeal { again} to
\eqref{isot},
\eqref{ellip} and \eqref{estfonc}, we get that
\begin{equation}
\label{fac1}
\partial u=e^{\Upsilon_1}F_1, \qquad \Upsilon_1\in W^{1,r}(\Omega),
\quad F_1\textrm{\ holomorphic in } \Omega \, ,
\end{equation}
where we notice that $\|\Upsilon_1\|_{W^{1,r}(\Omega)}$ is
bounded in terms of $\Omega$, $r$, the constants in \eqref{ellip}
and $\|\sigma\|_{W^{1,r}(\Omega)}$.
Factorization \eqref{fac1} is not yet what we need,
for $\tr_{\partial\Omega}\Upsilon_1$ may not be real-valued.
To remedy this, we will trade $r$ for a possibly smaller 
exponent { {$r_1>2$}}.
Specifically, it follows from \cite[Thm 5.1]{JKinhom}\footnote{The result 
is stated there for $n\geq3$ only, which may be confusing,
but the proof is valid for  $n=2$ as well. In fact, all we { need 
is}  \cite[Thm 5.15, (a),(b)]{JKinhom} for Besov spaces,
along with interpolation arguments on  top 
of \cite[p. 200]{JKinhom}. 
Since that part of the proof of \cite[Thm 5.15]{JKinhom} depends 
only on \cite{DahlbergNT}, complex interpolation and multiplier theory
for singular integral operators, the restriction $n\geq3$ is easily seen to be 
superfluous.} that the Dirichlet problem for harmonic functions
with boundary values in $W^{\theta,p}(\partial\Omega)$ is solvable in
$W^{\theta+1/p,p}(\Omega)$, as soon as $0<\theta<1$ and
$p\in[2,2+\varepsilon)$ where $\varepsilon>0$ depends on the Lipschitz 
constant of $\partial \Omega$. 
{ Since 
$\tr_{\partial\Omega}\Upsilon_1\in W^{1-1/r,r}(\partial\Omega)$} and
the latter space increases as $r$ decreases, there is
$r_1\in(2,r]$ (depending on $\Omega$ and $r$) and a harmonic function
$h\in W^{1,r_1}(\Omega)$ such that $\tr_{\partial\Omega}h=\tr_{\partial\Omega}
\textrm{Im}\Upsilon_1$. Let $g$ be a harmonic conjugate to $h$, normalized 
so that $ g_E=0$ for some $E\subset\Omega$ with $m_2(E)>0$.
Since $|\nabla h|=|\nabla g|$ pointwise by the Cauchy-Riemann equations, it
follows from \eqref{estfonc} that $g$ lies in $W^{1,r_1}(\Omega)$, and
so do the holomorphic functions $G:=-g+ih$ and $e^{G}$ since $\exp$ is entire.
Setting 
\begin{equation}
\label{nouveauUpsilonF}
\Upsilon=\Upsilon_1-G\qquad \textrm{ and}\qquad  F=F_1e^{G},
\end{equation}
 we have that 
$\Upsilon\in W^{1,r_1}(\Omega)$ is real-valued on $\partial\Omega$ and that
$F$ is holomorphic, while $\partial u=e^{\Upsilon}F$, as desired.

Because $u\in W^{1,2}_\RR(\Omega)$ by assumption
and $\Upsilon\in L^\infty(\Omega)$  by the 
Sobolev embedding theorem, we get that $F\in L^2(\Omega)$. Being holomorphic, 
$F$ can be written as $\partial H$ for some real-valued harmonic $H$, and
necessarily $H\in W^{1,2}_\RR(\Omega)$ for its complex derivatives $\partial H$ and
$\bar\partial H=\overline{\partial H}$ are 
in $L^2(\Omega)$. Being harmonic, $H$ satisfies \eqref{forcond} with 
$\sigma$ replaced by $1$, so we get the following analog to 
\eqref{seqGreenc}:
\[
\partial_n \, H= \gamma \mbox{ and } \partial_\tau H=-\mu 
\]
\begin{equation}
\label{seqGreencH}
\Longleftrightarrow\ 
2\lim_{k\to\infty}\int_{\partial\Omega_k}\!\!\!\!\!\partial H \, \mathfrak{n} \, 
\phi\,d\Lambda=
\langle \gamma+i\mu \,, \,\tr_{\partial\Omega}\phi\rangle,\ \phi\in W^{1,2}(\Omega).
\end{equation}
Substituting $\partial u=e^\Upsilon \partial H$ in \eqref{seqGreenc}
and reckoning that $\psi\mapsto \phi=\sigma e^\Upsilon \psi$ is an isomorphism
of $W^{1,2}(\Omega)$ because $\sigma e^{\Upsilon}\in W^{1,r_1}(\Omega)$, 
we see from \eqref{seqGreencH} that
\begin{equation}
\label{deriveesH}
\partial_n H=\text{\rm Re}\Bigl(e^{-\Upsilon}(\partial_n u-i\partial_\tau u)\Bigr),
\quad
\partial_\tau H=-\text{\rm Im}\Bigl(e^{-\Upsilon}
(\partial_n u-i\partial_\tau u)\Bigr).
\end{equation}
{ Taking into account} in \eqref{deriveesH}
that $e^\Upsilon$ is real-valued on $\partial\Omega$
yields \eqref{deriveesHr}.
\end{proof}

\subsubsection{Proof of Theorem \ref{normeq3/2}}
\label{pf3/2}
\begin{proof} 
Write $\partial u=e^{\Upsilon}F$ as in Lemma \ref{BNu},
and let $H\in W^{1,2}_\RR(\Omega)$ be a
harmonic function such that $F=\partial H$.
Since 
$e^{-\Upsilon}$ is bounded, { being continuous} on $\overline{\Omega}$
by the Sobolev embedding theorem, we deduce from \eqref{deriveesHr} 
{ that  $\partial_n H$ lies} in $ L^2(\partial\Omega)$.
Set $G$ to be a harmonic conjugate to $H$. By the discussion after
\eqref{def2nd} ({ with $\sigma\equiv1$} throughout),
we get that $G$ is, up to an additive  constant, the unique  
harmonic function 
in $W^{1,2}_\RR(\Omega)$ such that
$\partial_\tau G=\partial_n H$. 
From Proposition \ref{Neucor}, we now see that 
$\partial G\in\mathcal{S}^2(\Omega)$ with
$\textrm{Re}(\partial G \,\mathfrak{t})=e^{-\Upsilon}\partial_n u/2$ on $\partial\Omega$,
where $\mathfrak{t}$ is the tangent vector field to 
$\partial\Omega$ written in complex form.
By the Cauchy-Riemann equations $\partial H=i\partial G$,
so that in turn $F\in\mathcal{S}^2(\Omega)$ and
$\textrm{Im}(F \,\mathfrak{t})=e^{-\Upsilon}\partial_n u/2$ on $\partial\Omega$.
Also $H\in W_\RR^{3/2,2}(\Omega)$ for  $\partial H=F$ lies in 
$W^{1/2,2}(\Omega)$ by Theorem \ref{equivS2}  $(ii)$.
Let $\mathfrak{n}=\mathfrak{t}/i$ be the normal vector field on $\partial\Omega$, written in complex form.
By definition of complex derivatives 
({\it cf.} \eqref{defderC}), the nontangential convergence of $\nabla u$ to 
{ {$\partial_n u\,n+\partial_\tau u\,\tau$  is equivalent}} to
the nontangential convergence of $\partial u=e^{\Upsilon}F$ to
$\partial_n u\,\overline{\mathfrak{n}}/2+\partial_\tau u\,\overline{ \mathfrak{t}}/2$.
Considering the existence of nontangential limits { a.e.} for
Smirnov functions and the continuity of $\Upsilon$,
this is in turn equivalent to 
$2F=e^{-\Upsilon}(\partial_n u\,\overline{\mathfrak{n}}+\partial_\tau u\,\overline{ \mathfrak{t}})$, that is 
$2F\mathfrak{t}=e^{-\Upsilon}(i\partial_n u+\partial_\tau u)$ on $\partial\Omega$.
Taking real and imaginary 
parts, we are thus left { to verify  two} real equations:
\[2\textrm{Re}(F\mathfrak{t})=e^{-\Upsilon}\partial_\tau u\qquad
\textrm{ and}\qquad2\textrm{Im}(F\mathfrak{t})=e^{-\Upsilon}\partial_n u.
\] The second of these has already been checked.
By \eqref{deriveesHr}, the first reduces to
$2\textrm{Re}(\partial H\mathfrak{t})=\partial_\tau H$ which holds good by
\eqref{exptd}. That 
$u\in W_\RR^{3/2,2}(\Omega)$ follows from the relation
$\partial u= e^{\Upsilon} F$, the membership $F\in W^{1/2,2}(\Omega)$,
and the
fact that $e^\Upsilon$ is a multiplier of
$W^{1/2,2}(\Omega)$ for it lies in
$W^{1,r_1}(\Omega)$ { with $r_1>2$. Because} 
$\|\Upsilon\|_{W^{1,r_1}(\Omega)}$ depends only on 
$\Omega$, $\|\sigma\|_{W^{1,r}(\Omega)}$, $r$,  
and ellipticity constant $c$ in \eqref{ellip}, as asserted by
Lemma \ref{BNu}, so does the norm of this multiplier. 
Combining this with Theorem \ref{equivS2}  $(ii)$, we obtain:
\begin{equation}
\label{gradu1/2}
\|\nabla u\|_{W^{1/2,2}(\Omega)}\leq C\|\partial H\|_{W^{1/2,2}(\Omega)}=
C\|\partial G\|_{W^{1/2,2}(\Omega)}
 \leq
C'\|\partial G\|_{\mathcal{S}^2(\Omega)},
\end{equation}
where $C'$ depends only on the above-mentioned parameters. Besides,
we get from 
Proposition  \ref{Neucor} 
(applied with  $U=G$)
and \eqref{deriveesHr} that 
\[\|\partial G\|_{\mathcal{S}^2(\Omega)}\leq 
C''\|\partial_\tau G\|_{L^2(\partial\Omega)}=
C''\|\partial_n H\|_{L^2(\partial\Omega)}
\leq
C''e^{\|\Upsilon\|_{L^\infty(\partial\Omega)}} \|\partial_n u\|_{L^2(\partial\Omega)}
\]
where $C''$ depends only on $\Omega$. The latter estimate and
\eqref{gradu1/2} together yield
\begin{equation}
\label{estgradu1/2}
\|\nabla u\|_{W^{1/2,2}(\Omega)}\leq C_0 \|\partial_n u\|_{L^2(\partial\Omega)},
\end{equation}
where $C_0$ depends on the same parameters as $C'$.

Now, if we normalize $u$ so that $u(z_0)=0$ for some 
$z_0\in\Omega$  and
let $\rho_0>0$ be such that $\DD(z_0,\rho_0)\subset\Omega$, 
we deduce  from \eqref{forcond}, \eqref{ellip}, the Green formula
(which is valid since $u\in W^{2,r}_{\RR,loc}(\Omega)$) and
H\"older's inequality 
that, for $\rho\in(0,\rho_0)$,
\begin{align}
\nonumber
&\left|\frac{d}{d\rho}\left(\frac{1}{\rho}\int_{\TT(z_0,\rho)}\!\!\!\!\!\!u\,dm
\right)\right|=
\left|\frac{1}{\rho}\int_{\TT(z_0,\rho)}\!\!\!\!\!\!\partial_n u\,dm\right|
\leq\frac{1}{c\rho}\left|\int_{\TT(z_0,\rho)}\!\!\!\!\!\!\sigma\partial_n u\,dm\right|\\
\label{estimoy}
&=\frac{1}{c\rho}\left|\int_{\DD(z_0,\rho)}\!\!\nabla u.\nabla\sigma\,dm_2
\right|
\leq\frac{\pi^{1/4}}{c\rho^{1/2}}\|\nabla u\|_{L^4(\Omega)}\|\nabla\sigma\|_{L^2(\Omega)}{ .}
\end{align}
Since 
$\lim_{\rho\to0}\int_{\TT(z_0,\rho0)}udm/\rho=u(z_0)=0$, we infer from 
\eqref{estgradu1/2}, \eqref{estimoy} and the Sobolev embedding theorem
that, for $\rho\in(0,\rho_0)$,
\begin{equation}
\label{moycercu}
\left|\frac{1}{\rho}\int_{\TT(z_0,\rho)}\!\!\!\!\!\!u\,dm
\right|\leq C_1 \rho^{1/2}\|\partial_n u\|_{L^2(\partial\Omega)}
\end{equation}
where $C_1$ depends on 
$\Omega$, $\|\sigma\|_{W^{1,r}(\Omega)}$, $r$,  
and $c$ 
in \eqref{ellip}. Integrating \eqref{moycercu}
yields
\begin{equation}
\label{moydiscu}
\left|\frac{1}{\pi \rho_0^2}\int_{\DD(z_0,\rho_0)}u\,dm_2\right|\leq
C_2\rho_0^{1/2}\|\partial_n u\|_{L^2(\partial\Omega)}
\end{equation}
where $C_2$ depends on the same parameters as $C_1$.
Then,  \eqref{domnu3/2} follows from \eqref{estfonc}, \eqref{estgradu1/2} and \eqref{moydiscu}.

Finally, remember from \eqref{nouveauUpsilonF} the factorization 
$\partial u=e^{\Upsilon_1}F_1$ where $\Upsilon_1\in W^{1,r}(\Omega)$ and 
$F_1=F e^{-G}$ with $G\in W^{1,r_1}(\Omega)$, $G$ holomorphic.
{ As} $G\in L^\infty(\Omega)$ by the Sobolev embedding theorem,
we see that $F_1\in\mathcal{S}^2(\Omega)$ because $F$ does, so we may set
$\Psi=\Upsilon_1$ and $\Phi=F_1$, thereby completing the proof of $(i)$.

In view of \eqref{deriveesHr}, the factorization $\partial u=e^{\Upsilon}F$,
 and the boundedness of
$e^\Upsilon$, the proof of \eqref{equivmax3/2} reduces to the case where
$u$ is harmonic ({\it i.e.} $\sigma\equiv1$), and then it follows from 
Theorem \ref{equivS2} and Proposition \ref{Neucor} applied to $u$ and its
conjugate function. This shows $(ii)$.

As to $(iii)$, we already mentioned that $u\in W^{2,r}_{\RR,loc}(\Omega)$ 
(this is now obvious anyway since  
$\partial u=e^{\Psi}\Phi$) and we need to prove 
\eqref{equivpond3/2} which is equivalent to 
\begin{equation}
\label{reduc1/2}
\sum_{j=1,2}\|\textrm{d}(.,\partial\Omega)^{1/2}\,\partial_{x_j}(e^{\Upsilon}F)\|^2_{L^2(\Omega)}
+\|e^{\Upsilon}F\|_{L^2(\Omega)}^2
\sim\|e^\Upsilon F\|_{W^{1/2,2}(\Omega)}^2 \, .
\end{equation}
We already know from \eqref{inegfracdb} that the right hand side of \eqref{reduc1/2} is less than a constant (depending only on $\Omega$) 
times the left hand side.
To prove the reverse inequality, let $\varphi$ conformally map 
$\DD$ onto $\Omega$, so that
$f:=(F\circ\varphi)(\varphi')^{1/2}\in H^2$, and  recall that 
$|f(z)|\leq c\|f\|_{H^2}/(1-|z|)^{1/2}$ for $z \in \DD$ and some absolute constant $c$, 
by a classical inequality
of Hardy and Littlewood \cite[Thm 5.9]{duren}. Since 
$\textrm{d}(\varphi(z),\partial\Omega)\leq (1-|z|^2)|\varphi'(z)|$
by standard properties of conformal maps 
\cite[Ch.1, Cor. 1.4]{Pommerenke}, we get that 
$\|\textrm {d}(.,\partial\Omega)^{1/2} F\|_{L^\infty(\Omega)}\leq
\sqrt{2}c\|F\|_{\mathcal{S}^2(\Omega)}$.
Now,  by the Leibniz rule and the triangle inequality,
the first summand in the left hand side of \eqref{reduc1/2} is 
{ bounded above} by
\[
\sum_{j=1,2}\|\textrm{d}(.,\partial\Omega)^{1/2}(\partial_{x_j}\Upsilon) e^{\Upsilon}F\|^2_{L^2(\Omega)}
+
\sum_{j=1,2}\|\textrm{d}(.,\partial\Omega)^{1/2}e^\Upsilon
(\partial_{x_j}F)\|^2_{L^2(\Omega)}
\]
which is less than
\[\|e^\Upsilon\|_{L^\infty(\Omega)}^2 \, \left( 2\,{c}^2  \, 
\|\nabla\Upsilon\|_{L^2(\Omega)}^2 \, \|F\|_{\mathcal{S}^2(\Omega)}^2+
\sum_{j=1,2}\|\textrm{d}(.,\partial\Omega)^{1/2}
(\partial_{x_j}F)\|^2_{L^2(\Omega)}
\right).
\]
By \eqref{W1/2S2} and Theorem \ref{equivS2} $(ii)$, this 
quantity is majorized by $c'\|F\|_{W^{1/2,2}(\Omega)}^2$ where 
$c'=c'(\Omega, r_1,\|\Upsilon\|_{W^{1,r_1}(\Omega)})$, and since
$\|e^{\Upsilon}F\|_{W^{1/2,2}(\Omega)}\sim \|F\|_{W^{1/2,2}(\Omega)}$ 
we are done with the proof.
\end{proof}

\subsubsection{A generalized Rolle's theorem}
\label{sec:hardy}
We recall below the 1-dimensional version
of a Lusin-type theorem for Sobolev functions,
to be found in \cite[Thm 3.10.5]{ziemer}. 
More precisely, we { state the} case
$n=1$, $\ell=k=1$ and $p=2$ of the result just quoted.
The latter is in terms of Bessel capacities that we did not introduce, but we 
use here that the Bessel capacity $B_{0,2}$ is just Lebesgue measure,
see \cite[Def. 2.6.2]{ziemer}.
\begin{lemma}{\cite[Thm 3.10.5]{ziemer}}
\label{LusinSob}
Let $v \in W^{1,2}(\RR)$ and $\eps>0$. There exists an open set $\mathcal{U} 
\subset \RR$ and a function $w \in C^1(\RR)$ such that
$m_1(\mathcal{U} )<\eps$ and
\[w(t)=v(t),\quad w'(t)=v'(t),\qquad \forall t\in\RR \setminus \mathcal{U} .\]
\label{lemma1}
\end{lemma}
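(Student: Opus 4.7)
The plan is to produce a closed set $F\subset\RR$ with $m_1(\RR\setminus F)<\varepsilon$ on which $v$ and its classical derivative together satisfy the hypotheses of the Whitney extension theorem in one variable; the function $w\in C^1(\RR)$ is then the Whitney extension, and $\mathcal{U}:=\RR\setminus F$ is the required open set. Since we are in dimension one, the embedding $W^{1,2}(\RR)\hookrightarrow AC(\RR)$ is elementary, so we may identify $v$ with its absolutely continuous representative, whose classical derivative coincides a.e. with the weak derivative $v'\in L^2(\RR)$.

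I would build the good set in three stages. By the classical Lusin theorem applied to $v'\in L^2(\RR)$ there is a closed set $F_1$ with $m_1(\RR\setminus F_1)<\varepsilon/3$ on which $v'$ is continuous. Next, since the Hardy--Littlewood maximal operator $M$ is bounded on $L^2(\RR)$, one can pick $\lambda$ large enough that $A_\lambda:=\{M(v')>\lambda\}$ has measure less than $\varepsilon/3$, and we discard $A_\lambda$. Finally, by the Lebesgue differentiation theorem the quantity $\rho(x,r):=(2r)^{-1}\int_{x-r}^{x+r}|v'(t)-v'(x)|\,dt$ tends to $0$ as $r\to 0^+$ for almost every $x$; applying Egorov's theorem I would remove one further set of measure $<\varepsilon/3$ so that on the resulting closed set $F$ this convergence becomes uniform.

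For $x,y\in F$ the absolute continuity of $v$ gives
\[
|v(y)-v(x)-v'(x)(y-x)|=\Bigl|\int_x^y\bigl(v'(t)-v'(x)\bigr)\,dt\Bigr|\leq |y-x|\,\rho\bigl(x,|y-x|\bigr),
\]
so the right-hand side is $o(|y-x|)$ \emph{uniformly} in $x\in F$ by the third step. Together with the continuity of $v'|_F$ (preserved from $F_1$) this is exactly the first-order Whitney condition, and the Whitney extension theorem in one variable provides $w\in C^1(\RR)$ with $w|_F=v|_F$ and $w'|_F=v'|_F$, completing the proof.

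The main obstacle is precisely the uniformity of the Taylor-remainder decay on the good set: the pointwise Lebesgue-point behaviour of $v'$ is not strong enough to feed Whitney extension, which demands the remainder to be $o(|y-x|)$ uniformly in $x\in F$ as $|y-x|\to 0$. Egorov's theorem is the cleanest device to perform this upgrade; an alternative closer to Ziemer's own presentation is to cut the domain along level sets of a sharp maximal function of $v'$ and to control the excised set by Bessel-capacity estimates, which is what the general statement in \cite[Thm 3.10.5]{ziemer} requires in dimension $n>1$ but which collapses to ordinary Lebesgue measure here.
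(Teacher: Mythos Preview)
The paper does not prove this lemma at all: it is quoted verbatim from \cite[Thm~3.10.5]{ziemer} and used as a black box in the proof of Proposition~\ref{real}. So there is no ``paper's own proof'' to compare against; what you have written is a self-contained derivation of the cited result in the one-dimensional case.

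Your route---Lusin to make $v'$ continuous on a large closed set, Egorov to upgrade the Lebesgue-point decay of the first-order remainder to uniform decay, then Whitney extension---is the standard argument and is essentially correct. A few small points are worth tightening. First, the displayed inequality is off by a harmless factor: with $r=|y-x|$ one has
\[
\Bigl|\int_x^y\bigl(v'(t)-v'(x)\bigr)\,dt\Bigr|\le\int_{x-r}^{x+r}\!|v'(t)-v'(x)|\,dt=2r\,\rho(x,r),
\]
so the bound is $2|y-x|\,\rho(x,|y-x|)$. Second, Egorov's theorem requires finite measure; since $m_1(\RR)=\infty$ you should apply it on each interval $[-n,n]$ with budget $\varepsilon\,2^{-n}/3$ and take the union of the exceptional sets. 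It also helps to apply Egorov to the monotone quantity $\tilde\rho(x,r):=\sup_{0<s\le r}\rho(x,s)$ (which still tends to $0$ at every Lebesgue point of $v'$), so that uniform convergence along a sequence $r_n\downarrow0$ automatically yields uniform convergence for the full continuous parameter. Third, the maximal-function step you include is never used in your verification of the Whitney hypothesis and can be dropped; Lusin plus Egorov already deliver both the continuity of $v'|_F$ and the uniform $o(|y-x|)$ remainder needed to invoke Whitney.
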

We use Lemma \ref{LusinSob} to prove the following generalization of Rolle's theorem.
\begin{proposition}
Let $\Omega\subset\RR^2$ be a bounded Lipschitz domain and 
$v\in W^{1,2}(\partial\Omega)$. Assume that $v=0$ at 
$\Lambda$-a.e. point of a set $B\subset\partial\Omega$ with $\Lambda(B)>0$.
Then, there is $B'\subset B$, with $\Lambda(B')>0$, such that 
$\partial_\tau v=0$ at $\Lambda$-a.e. point of $B'$.
\label{real}
\end{proposition}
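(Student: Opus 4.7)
The plan is to pull the statement back to a one-dimensional statement on $\RR$ via arclength and combine Lemma \ref{LusinSob} with the Lebesgue density theorem. Concretely, since $\partial\Omega$ is a compact Lipschitz curve, it can be covered by finitely many relatively open arcs each of which is bi-Lipschitz-equivalent, via arclength parametrization, to an open interval of $\RR$; under such a chart the space $W^{1,2}(\partial\Omega)$ transports to $W^{1,2}$ of an interval and $\partial_\tau v$ transports (up to orientation) to the ordinary distributional derivative $v'$. Since $\Lambda(B)>0$, at least one of these arcs $I$ satisfies $\Lambda(B\cap I)>0$, and after multiplying by a smooth cutoff supported in $I$ we may regard $v$ as an element of $W^{1,2}(\RR)$ vanishing on a measurable set $A\subset\RR$ with $m_1(A)>0$. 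It therefore suffices to show that there is $A'\subset A$ with $m_1(A')>0$ and $v'=0$ $m_1$-a.e. on $A'$.

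To do so, I would apply Lemma \ref{LusinSob} with $\varepsilon=m_1(A)/2$ to produce an open set $\mathcal{U}\subset\RR$ with $m_1(\mathcal{U})<m_1(A)/2$ and a function $w\in C^1(\RR)$ agreeing with $v$ (and with $v'=w'$) on $\RR\setminus\mathcal{U}$. Set $A_0:=A\setminus\mathcal{U}$, so $m_1(A_0)\geq m_1(A)/2>0$ and $w\equiv 0$ on $A_0$. Let $A'$ consist of those points of $A_0$ which are Lebesgue density points of $A_0$; by the Lebesgue density theorem, $m_1(A')=m_1(A_0)>0$. For any $t_0\in A'$, the density condition yields a sequence $t_n\in A_0\setminus\{t_0\}$ with $t_n\to t_0$, and from $w(t_n)=w(t_0)=0$ together with the differentiability of $w$ at $t_0$ we get
\[
w'(t_0)=\lim_{n\to\infty}\frac{w(t_n)-w(t_0)}{t_n-t_0}=0.
\]
Since $t_0\notin\mathcal{U}$, this gives $v'(t_0)=w'(t_0)=0$ for every $t_0\in A'$. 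Transporting $A'$ back to $\partial\Omega$ through the arclength parametrization yields the desired $B'\subset B$.

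No real obstacle is anticipated; the one point requiring care is the reduction step, where one must check that the bi-Lipschitz change of variables preserves positivity of $\Lambda$-measure, that the cutoff-and-extension procedure keeps $v$ in $W^{1,2}(\RR)$, and that it preserves the property of vanishing on the image of $B\cap I$. All of these are routine consequences of the Lipschitz regularity of $\partial\Omega$ and the standard behaviour of $W^{1,2}$ under Lipschitz changes of coordinates and multiplication by smooth cutoffs.
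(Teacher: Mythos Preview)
Your argument is correct and follows the same architecture as the paper's proof: localize to a one-dimensional interval via Lipschitz charts, invoke Lemma~\ref{LusinSob} to reduce to a $C^1$ function, and conclude by a density-type argument. The only cosmetic difference lies in the last step, where the paper uses accumulation points of the zero set together with Rolle's theorem and the continuity of $w'$, whereas you use Lebesgue density points and the difference quotient directly; both are equally valid and equally elementary.
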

\begin{proof}
As $\Omega$ is bounded and Lipschitz, 
$\partial\Omega$
can be covered with open parallelepiped  $Q_1,\cdots,Q_N$ of the form
$Q_j=\mathcal{R}_j(Q_{a,b})$, where $\mathcal{R}_j$ is an
affine isometry of $\RR^2$ and $Q_{a,b}=(-a,a)\times(-b,b)$, with $a,b>0$,
in such a way that 
\[
\mathcal{R}_j^{-1}(\Omega) \cap Q_{a,b}=\{x\in Q_{a,b}:\,x_2> \psi_j (x_1)\},
\]
where $\psi_j$ is a Lipschitz function.
Denote by $P_1$ the projection 
onto the first component in $\RR^2$, and for $E\subset \partial\Omega\cap Q_j$
set $E_j=P_1(\mathcal{R}_j^{-1}(E))\subset(-a,a)$ so that
$\Lambda(E)=\int_{E_j} |(1,\psi_j')^t|dm_1$.
Since
Lipschitz changes of variables preserve Sobolev classes
\cite[Thm 2.2.2]{ziemer}, it holds 
that $v\in W^{1,2}(\partial\Omega)$ 
if and only if
$v(\mathcal{R}_j(x_1,\psi_j(x_1)))$ belongs to 
$W^{1,2}((-a,a))$ for all $j$. Thus, as $\Lambda(B\cap Q_j)>0$ for at 
least one $j$,
it is enough to prove the analog of the proposition 
on the real interval $(-a,a)$ instead of $\partial\Omega$. 
By the extension theorem we may assume that
$v$ is defined over the whole
real line. Then, taking $\varepsilon$ small enough in Lemma \ref{LusinSob},
we conclude that it is enough to prove Proposition \ref{real}
when $v$ has continuous derivative. Assume it is the case and
let $A$ be the set of accumulation points of $B$.
Note that $m_1(A)=m_1(B)>0$ because
$B\setminus A$ is countable \cite{jech}. Moreover,
to each $t\in A$,
there exists a non-stationary sequence $(t_n)\subset B$, $n \in \mathbb{N}$, 
such that $t_n \rightarrow t$.  
Without loss of generality, we may assume that $(t_n)$ is monotone,
say  $t_n <t_{n+1}$ (the case $t_n>t_{n+1}$ is similar) for all 
$n \in \mathbb{N}$. Since $v(t_n)=v(t_{n+1})=0$,
there is $s_n\in[t_n,t_{n+1}]$ such that $v'(s_n)=0$
by Rolle's theorem. Since $s_n\rightarrow t$ and $v'$ in continuous, we get that $v'(t)=0$, as desired.
\end{proof}

\subsection{Proof of Theorem \ref{cauchy1}}
\label{subs:proof1}

\begin{proof} 
From Theorem \ref{normeq3/2}  $(ii)$,
we get that $u_{|\partial\Omega}$ lies in $W^{1,2}(\partial\Omega)$,
hence Proposition \ref{real} implies that both
$\partial_n u$ and $\partial_\tau u$ vanish on some $E\subset\gamma$ with
$\Lambda(E)>0$. By Theorem  \ref{normeq3/2}  $(i)$, we now see that
$(e^{\Psi} \Phi)_{|\partial\Omega}$ vanishes a.e. on $E$, and since 
$e^{-\Psi}$ is bounded, by the Sobolev embedding theorem, we must have 
that $F_{|\partial\Omega}=0$ a.e. on $E$. As $F$ belongs to 
the Smirnov class $\mathcal{S}^2(\Omega)$, we { deduce 
that $F\equiv0$}, hence $\partial u\equiv0$ and thus  $\nabla u\equiv0$ since $u$ is real.
Therefore $u$ is a constant, and in fact $u\equiv0$.
\end{proof}

\section{The anisotropic case}
\label{anisotropic}
We consider in this section
a conductivity equation of the form 
\eqref{forcond}  
where $\sigma$ is valued in the set of real symmetric matrices and
the ellipticity condition \eqref{ellip} is replaced by
\begin{equation}
\label{ellipa}
c \, I_n \leq \sigma \leq c^{-1} \, I_n 
\quad\text{for some constant}\ c\in(0,+\infty),
\end{equation}
with $I_n$ the identity matrix of order $n$. 
Isotropic equations 
correspond to the case where the image  of $\sigma$
consists of scalar matrices; otherwise the conduction
is said to be anisotropic.
Existence and uniqueness of solutions to the Neumann problem and the
forward Robin problem proceed as before provided
that the normal derivative gets replaced by $n.\sigma\nabla u$.
Questions about uniqueness of the
Robin coefficient for the inverse problem may be raised 
as in Section \ref{sec:cond}, namely: \emph{given 
$0\not\equiv g\in L^2(\Gamma_0)$ and $u$ the solution to the forward Robin 
problem with Robin coefficient $\lambda\in L_+^\infty(\Gamma)$, subject to
the boundary condition $n.\sigma\nabla u=g$, does the 
knowledge of $u_{|\Gamma_0}$ determine $\lambda$ uniquely?}

Of course when $n\geq3$, uniqueness cannot  prevail in general
as we saw it may not even hold for the 
ordinary Laplacian. But if $n=2$ uniqueness does hold: this 
follows from Theorem  \ref{cauchy1} and the fact that an anisotropic 
equation 
on a bounded Lipschitz domain with $W^{1,r}$ coefficients, $r>2$,
is the diffeomorphic image of an isotropic one
(on another Lipschitz domain).
More precisely, if $\Theta$ is a diffeomorphism of $\RR^2$ of class $C^1$
and if we set $\Omega_1=\Theta(\Omega)$, a computation shows 
(see {\it e.g.} \cite{APL}) that
$u$ solves for \eqref{forcond}  in $W^{1,2}(\Omega)$ if and only if
$v=u\circ\Theta^{-1}$ solves for 
$\nabla \cdot \left( \tilde{\sigma} \, \nabla v \right) =0$
in $W^{1,2}(\Omega_1)$, where
\begin{equation}
\label{tildesigma}
\tilde{\sigma}(\Theta(z))=\frac{1}{|D\Theta(z)|}
{D\Theta(z)\sigma(z)D\Theta^t(z)}
\end{equation}
and $|D\Theta|$ indicates the determinant of the Jacobian matrix $D\Theta$.
Moreover,  using a subscript 1 for the unit tangent and normal vectors to
$\Omega_1$, it holds by construction that
$\partial_{\tau_1} v\circ\Theta=\partial_\tau u/|D\Theta\tau|$, and 
from the weak 
formulation of the Neumann problem we get that 
$(n_1.\tilde{\sigma}\nabla v)\circ\Theta=n.\sigma\nabla u/|D\Theta\tau|$.

Now, since $\sigma=(\sigma_{ij})$ has entries in $W^{1,r}(\Omega)$
and satisfies \eqref{ellipa},  we can 
extend it into a symmetric matrix-valued function with entries in 
$W^{1,r}_{loc}(\RR^2)$ meeting \eqref{ellipa}  
and equal to $I_2$
outside of a compact set; this only requires the extension theorem, 
continuity of
$W^{1,r}$-functions when $r>2$, and a smooth partition of unity.
Denoting this extension by $\sigma$ again,
define the complex function 
$\mu_1=(-\sigma_{11}+\sigma_{2,2}-2i\sigma_{12})/(\sigma_{11}+\sigma_{22}+2\sqrt{|\sigma|})$. As $\mu_1$ is compactly supported and $|\mu_1|<C<1$, the 
solution $\Theta$ to the
Beltrami equation $\bar\partial \Theta=\mu_1\partial\Theta$  which is
$z+O(1/z)$ at infinity is a homeomorphism of $\CC$ of class $W^{2,r}_{loc}$
(a fortiori it is $C^1$-smooth) and 
 $\tilde\sigma$ given by \eqref{tildesigma} satisfies
$\tilde\sigma=|\sigma\circ\Theta^{-1}|^{1/2}$ \cite{SunUhl}, see also
\cite{Sylvester}
where this technique was initiated for smoother coefficients and the nice 
exposition in \cite{APL} which deals with bounded coefficients 
(but less smooth $\Theta$).  Because $\Omega_1=\Theta(\Omega)$ 
is Lipschitz and
the scalar-valued function
$\tilde\sigma$ satisfies \eqref{isot} and \eqref{ellip}
(with $n=2$), we can apply Theorem \ref{cauchy1} to $v$ on $\Omega_1$.
Thus, we deduce from the relations between $u$ and $v$ that if
$u\not\equiv0$ then $u$ and 
$n.\sigma\nabla u$
cannot vanish together on a subset of positive measure of $\partial\Omega$.
The proof of Theorem \ref{main1} can now be repeated to give us:
\begin{corollary}
Theorem \ref{main1} still holds in the anisotropic case 
when $\sigma$ is real symmetric $2\times2$-valued 
with  entries in $W^{1,r}(\Omega)$ and 
meets \eqref{ellipa},
$r>2$, provided the normal derivative $\partial_n u$ in \eqref{forward1}
gets replaced by $n.\sigma\nabla u$. 
\end{corollary}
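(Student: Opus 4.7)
The plan is to carry out the isothermal-coordinate reduction already sketched in the paragraphs that precede the corollary, and then apply Theorem \ref{main1} to the transformed problem, transcribing the contradiction argument given in the proof of Theorem \ref{main1} verbatim on the image domain. No new unique-continuation ingredient is needed; everything reduces to checking that the pushforward by $\Theta$ respects all the functional and geometric hypotheses.

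Concretely, I would first extend $\sigma$ to a symmetric matrix-valued function on $\RR^2$ with entries in $W^{1,r}_{loc}(\RR^2)$, still satisfying \eqref{ellipa} and equal to $I_2$ outside a large ball; this is possible by the extension theorem, the continuity of $W^{1,r}$-functions for $r>2$, and a smooth partition of unity. I would then form the Beltrami coefficient $\mu_1$ out of the entries of $\sigma$ as indicated in the preamble, and solve $\bar\partial\Theta=\mu_1\,\partial\Theta$ normalized by $\Theta(z)=z+O(1/z)$ at infinity. By \cite{SunUhl,APL}, $\Theta$ is a $C^1$ bi-Lipschitz homeomorphism of $\CC$ of class $W^{2,r}_{loc}$, so that $\Omega_1:=\Theta(\Omega)$ is again a bounded simply connected Lipschitz domain, and the scalar function $\tilde\sigma=|\sigma\circ\Theta^{-1}|^{1/2}$ satisfies \eqref{isot}-\eqref{ellip} with $n=2$.

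Next, I would push forward the data of the anisotropic Robin problem under $\Theta$. Set $\Gamma_{0,1}=\Theta(\Gamma_0)$ and $\Gamma_1=\Theta(\Gamma)$; by bi-Lipschitz invariance these form a partition of $\partial\Omega_1$ of the form \eqref{partbord}. For any $u\in W^{1,2}_\RR(\Omega)$ solving the anisotropic forward Robin problem with datum $g\in L^2(\Gamma_0)$ and coefficient $\lambda\in L^\infty_+(\Gamma)$, the function $v:=u\circ\Theta^{-1}$ lies in $W^{1,2}_\RR(\Omega_1)$ and solves the isotropic conductivity equation for $\tilde\sigma$ on $\Omega_1$. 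The identities $(n_1.\tilde\sigma\nabla v)\circ\Theta=(n.\sigma\nabla u)/|D\Theta\,\tau|$ and $(\partial_{\tau_1}v)\circ\Theta=\partial_\tau u/|D\Theta\,\tau|$ recalled before the corollary, together with the fact that $|D\Theta\,\tau|$ is bounded above and bounded away from zero $\Lambda$-a.e.\ on $\partial\Omega$ (as $\Theta$ restricts to a $C^1$-diffeomorphism of $\overline{\Omega}$ onto $\overline{\Omega_1}$), ensure that the transported Neumann datum $\tilde g$ on $\Gamma_{0,1}$ lies in $L^2(\Gamma_{0,1})$ with $\tilde g\not\equiv 0$, and that the transported Robin coefficient $\tilde\lambda$ on $\Gamma_1$ lies in $L^\infty_+(\Gamma_1)$. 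Then, if $\lambda_1,\lambda_2\in L^\infty_+(\Gamma)$ produce solutions $u_1,u_2$ agreeing on $\Gamma_0$, the corresponding $v_1,v_2$ solve the isotropic forward Robin problem on $\Omega_1$ with common Neumann datum $\tilde g$ and agree on $\Gamma_{0,1}$, so Theorem \ref{main1} gives $\tilde\lambda_1=\tilde\lambda_2$ $\Lambda$-a.e.\ on $\Gamma_1$, whence $\lambda_1=\lambda_2$ $\Lambda$-a.e.\ on $\Gamma$ after pulling back by $\Theta$.

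The main obstacle is not analytic but bookkeeping: one must verify that all quantities (Lipschitz and simple-connectedness of $\Omega_1$; $\tilde\sigma$ in $W^{1,r}$; $\tilde g\in L^2\setminus\{0\}$; $\tilde\lambda\in L^\infty_+$) land in the classes required by the isotropic Theorem \ref{main1}. Once the $C^1$ bi-Lipschitz character of $\Theta$ on $\overline{\Omega}$ and the two boundary change-of-variables formulas above are granted from the Beltrami theory of \cite{SunUhl,APL}, the proof is a line-by-line transcription of the contradiction argument used to derive Theorem \ref{main1} from Theorem \ref{cauchy1}.
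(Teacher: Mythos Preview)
Your proposal is correct and follows the same isothermal-coordinate reduction as the paper. The only (inessential) difference is organizational: the paper applies Theorem~\ref{cauchy1} on $\Omega_1$ to obtain an anisotropic unique-continuation statement on $\Omega$ and then repeats the contradiction argument of Theorem~\ref{main1} there, whereas you transport the full Robin data and invoke Theorem~\ref{main1} directly on $\Omega_1$; both routes amount to the same thing.
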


\section{{ Concluding remarks}} 
\label{conclusion}
{ 
In the notation of \eqref{forward10}, stable determination
with respect to $u_{|\Gamma_0}$,
of a smooth Robin coefficient $\la$ 
has been studied in \cite{alessandrini_delpiero_rondi,
chaabane_fellah_jaoua_leblond,chaabane_jaoua,
cheng_choulli_lin,
sincich} for the Laplace equation. When $n=2$, the
factorization of $\partial u$ given in  Theorem \ref{normeq3/2} 
may
help dealing with this issue for more general conductivities and
less smooth $\la$.

We further mention that 
stability of the Cauchy problem in dimension 2,
for general anisotropic conductivity equations
with bounded conductivity,
has been extensively studied in \cite{alessandrini} using
tools from complex analysis. 

In this connection, we point out that the factorization technique of  
Lemma \ref{BNu} enjoys some generalization
to the anisotropic case which rests on the method of isothermal coordinates 
that we recalled in
Section \ref{anisotropic}. This suggests a research path worth exploring 
when dealing with stability  for Sobolev-smooth conductivities.

It is also natural to ask whether  results from the present paper remain 
valid when $\sigma$ is merely bounded. At present, 
the derivation of a factorization for $\partial u$ requires some smoothness
and it is not even clear if it holds for $\sigma\in L^\infty\cap W^{1,r}$ 
when $r<2$. The case where $\sigma\in W^{1,2}(\Omega)$ deserves special 
mentioning: although the equation may no longer be strictly elliptic and
solutions need not even be locally bounded, it is possible to make sense out 
of the Dirichlet problem for $L^p$ data and the factorization 
$\partial u=e^\Psi\Phi$ still holds in slightly modified form where 
$\Psi$ is pure imaginary on $\partial\Omega$ \cite{BBC}. Therefore
we expect some of our theorems to remain valid, at least if $\Omega$ is smooth.

Yet another generalization concerns with complex-valued $\sigma$, which 
arise in  impedance tomography \cite{CIN}. In this case
\eqref{system} becomes a system of complex equations, and the factorization 
of $\partial u$ has apparently not been investigated.

Since the negative result of \cite{bourgain_wolff},
weaker unique continuation issues have been raised in dimension 3 and higher.
One of them is: does a harmonic
function in $\Omega$,  the trace of which vanishes on a non-empty  open 
subset {{${\mathcal O}$}} of  $\partial\Omega$ and whose normal derivative vanishes on a subset 
of positive measure { in ${\mathcal O}$}, have to vanish identically?
This question is still open in general, and we refer the reader to
\cite{adolfsson_escauriaza,kenig_wang} for advances on the subject. 
{ In} the setting of Robin inverse problems, the issue raised
in Remark \ref{rmk2} as to whether $\partial_nu/u$ can remain non-negative 
and bounded in a neighborhood of a set of positive measure where $u$,
$\partial_nu$ both vanish,  seems to be 
more relevant and deserves further study.
}

Finally, we did not touch upon multiply connected domains $\Omega$, where similar uniqueness properties can be proved.

\section*{Acknowledgments} 
The authors thank the reviewer for his constructive remarks.

\bibliographystyle{plain} 
\bibliography{biblioBBL1}

\appendix

\end{document}